\newcommand{\bs}{\boldsymbol}
\newtheorem{algorithm}{Algorithm}
\def\cequiv{\raisebox{-1.5mm}{$\;\stackrel{\raisebox{-3.9mm}{=}}{{\sim}}\;$}}
\def\dx{\mathrm{dx}}
\def\uphi{\undertilde{\varphi}}
\def\upsi{\undertilde{\psi}}
\def\rot{{\rm rot}}
\def\vgm12{\bs{V}^{1+,2}_{\gamma,M}}
\newtheorem{theorem}{Theorem}
\newtheorem{remark}[theorem]{Remark}
\newtheorem{lemma}[theorem]{Lemma}
\newtheorem{corollary}[theorem]{Corollary}
\newtheorem{definition}[theorem]{Definition}
\newcounter{mnote}
\let\oldmarginpar\marginpar
\renewcommand\marginpar[1]{\-\oldmarginpar[\raggedleft\footnotesize #1]%
  {\raggedright\footnotesize #1}}
\begin{document}

\title[A multi-level mixed method for the biharmonic eigenvalue problem]{A multi-level mixed element method for the eigenvalue problem of biharmonic equation}
\author{Shuo Zhang}
\address{LSEC, Institute of Computational Mathematics and Scientific/Engineering Computing, Academy of Mathematics and System Sciences, Chinese Academy of Sciences, Beijing 100190, People's Republic of China}
\email{szhang@lsec.cc.ac.cn}
\thanks{S. Zhang is partially supported by the National Natural Science Foundation of China with Grant No. 11471026 and National Centre for Mathematics and Interdisciplinary Sciences, Chinese Academy of Sciences.}

\author{Yingxia Xi}
\address{LSEC, Institute of Computational Mathematics and Scientific/Engineering Computing, Academy of Mathematics and System Sciences, Chinese Academy of Sciences, Beijing 100190, People's Republic of China}
\email{yxiaxi@lsec.cc.ac.cn}

\author{Xia Ji}
\address{LSEC, Institute of Computational Mathematics and Scientific/Engineering Computing, Academy of Mathematics and System Sciences, Chinese Academy of Sciences, Beijing 100190, People's Republic of China}
\email{jixia@lsec.cc.ac.cn}
\thanks{X. Ji is supported by the National Natural Science Foundation of China (No. 11271018, No. 91230203) and the Special Funds for National Basic Research Program of China (973 Program 2012CB025904), and National Centre for Mathematics and Interdisciplinary Sciences, Chinese Academy of Sciences.}

\subjclass[2000]{65N25,65N30,47B07}

\keywords{Eigenvalue problem, multi-level mixed element method, biharmonic equation }

\begin{abstract}

In this paper, we discuss approximating the eigenvalue problem of biharmonic equation. We first present an equivalent mixed formulation which admits amiable nested discretization. Then, we construct multi-level finite element schemes by implementing the algorithm as in \cite{LinXie2015} to the nested discretizations on series of nested grids. The multi-level mixed scheme for biharmonic eigenvalue problem possesses optimal convergence rate and optimal computational cost. Both theoretical analysis and numerical verifications are presented. 
\end{abstract}

\maketitle



%
%
%
%

%
%
%
\section{Introduction}

%

The eigenvalue problem of the biharmonic equation (biharmonic eigenvalue problem) is one of the fundamental model problems in linear elasticity, and can find applications in, e.g., modelling the vibration of thin plates. There has been a long history on developing the finite element methods of the biharmonic eigenvalue problem, and many schemes have been proposed for discretization \cite{BrennerMonkSun2015,ChenLin2007,JiaXieYinGao2007,DavidRodolfo2009}, computation of guaranteed upper and lower bounds\cite{Carsten2014,Yang.Y;Lin.Q;Bi.H;Lin.Q2012,Hu.J;Huang.Y;Lin.Q2015,Hu.J;Huang.Y;Shen.Q2015}, and adaptive method and its convergence analysis \cite{Gallistl.D2015}. This paper is devoted to studying the multi-level efficient method of the biharmonic eigenvalue problem. Specifically, we present a discretization scheme which preserves the nested essence on nested grids, and then construct a multi-level algorithm based on the scheme. The cost of the multi-level algorithm versus the intrinsic accuracy of the scheme is asymptotically optimal.

As well known, the multi-level algorithm based on nested essence has been a key tool in computational mathematics and scientific computing fields. For the eigenvalue problem, many multi-level algorithms have been designed and implemented. For example, there are several successful methods for the Poisson eigenvalue problem. The two-grid method has been proposed and analyzed by Xu-Zhou in \cite{XuZhou2001}. The idea of the two-grid method is related to the ideas in [23, 24] for nonsymmetric or indefinite problems and nonlinear elliptic equations.
Since then, many numerical methods for solving eigenvalue problems based on the idea of the two-grid method are developed (see, e.g.,\cite{BiYang2011,ChienJeng2006,DaiZhou2007,Kolman2015,LinYang2011,YangJiangZhangWangBi2012}). A type of multi-level correction scheme is presented by Lin-Xie \cite{LinXie2015} and Xie \cite{Xie.HJCP}. The method is a type of operator iterative method (see, e.g, \cite{Lin.Q1979,XuZhou2001,Zhou.A2010}).  Besides, Xie \cite{Xie2015} presents a multi-level correction scheme, and the guaranteed lower bounds of the eigenvalues can be obtained. The correction method for eigenvalue problems in these papers are based on a series of finite element spaces with different approximation properties related to the multi-level method (cf. \cite{Xu.J1992SR}). With the proposed methods, the eigenvalue problem is transformed to an eigenvalue problem on the coarsest grid and a series of source problem on the fine grids. The scheme can be proved asymptotically optimal. The same strategy can be implemented on the Stokes equation, and similar asymptotic optimality is constructed \cite{LinLuoXie2013}. These works mentioned above have indeed presented a framework of designing multi-level schemes which works well for the elliptic eigenvalue problem and stable saddle point problem, provided a series of subproblems with intrinsic nestedness.

In contrast to the second order problem, the multi-level method for the biharmonic eigenvalue problem has seldom been discussed, due to the lack of nested subproblems. Indeed, when we consider the primal formulation of the biharmonic problem, the high stiffness of the Sobolev space $H^2$ makes it difficult to construct nested discretizations. Besides spline-type elements, the rectangular BFS element \cite{Bogner1965} is the only element which can form nested finite element spaces on nested grids; a multi-level algorithm has been designed based on BFS element for fourth order problems on rectangular grids \cite{Ji2014}.  Moreover, elements that are able to form nested spaces are proved to be conforming ones; therefore, people can not obtain guaranteed lower bounds of eigenvalues with these elements. One way for this situation is to loose the stiffness of the finite element spaces. Mixed element method is then frequently used, and several schemes for the biharmonic eigenvalue problem with polynomials of low degree have been designed\cite{AndreevLazarov2005,Ishihara1978}. Also, some discretization schemes of mixed type for boundary value problems can be naturally utilized for the eigenvalue problem; we refer readers to \cite{Boffi.D;Brezzi.F;Fortin.M2013}  for related discussion.  However, we have to remark that the order-reduced nestedness discretizations is still not straightforward. For example, the Ciarlet-Raviart formulation\cite{Ciarlet.P;Raviart.P1974} admits us to discretize the biharmonic operator with piecewise continuous linear polynomials. However, as this formulation is stable on the space pair $H^1_0(\Omega)\times H^{-1}(\Delta,\Omega)$ \cite{Bernardi;Girault;Maddy1992}, the inheritance of the {\it topology} onto the finite element space is an issue, and the finite element spaces on nested grids are not {\it topologically} nested. The same problem is encountered for some other mixed formulations which introduce direct auxiliary variables, such as \cite{Falk.R1978,Johnson.C1973,Hermann.L1967,Hellan.K1967,Krendl.W;Rafetseder.K;Zulehner.W}. More discussion can be found in  \cite{Li.Z;Zhang.S2016}. These may explain why few multi-level scheme is discussed for the biharmonic eigenvalue problem.

In this paper, we seek to implement multi-level strategy by constructing amiable nested finite element discretization for the biharmonic eigenvalue problem. We first introduce a mixed formulation whose corresponding source problem is discussed in \cite{Li.Z;Zhang.S2016} and \cite{Girault;Raviart1986}. This mixed formulation is stable on Sobolev spaces of zero and first orders (cf. Lemma \ref{eq:contiiso} below). As the stiffness is loosened, polynomials of low degree are enough for its discretization, and optimal accuracy can be expected. Therefore, it admits discretizations that are nested algebraically and {\it topologically}. Secondly, we construct a family of multi-level schemes for the mixed formulation of the eigenvalue problem. The multi-level algorithms for biharmonic eigenvalue problem possess optimal accuracy and optimal computational cost.

For the proposed algorithms, both theoretical analysis and numerical verification are given. We remark that, though the multi-level strategy is essentially the same as the one used by Lin-Xie \cite{LinXie2015,LinLuoXie2013,Xie.HJCP,Ji2014}, the theoretical analysis is not directly by the same virtue. Actually, if we separate the ``primal variables" from ``Lagrangian multipliers", we will find the skeleton bilinear form is not coercive on the primal variables nor on the Lagrangian multipliers. This makes the classical theory of the spectral approximation of the saddle-point problems (cf. \cite{LinLuoXie2013,Mercier.B;Osborn.J;Rappaz.J;Raviart.P1981,Boffi.D;Brezzi.F;Gastaldi.L1997}) not directly usable in the present paper. A precise discussion can be found in Remark \ref{rem:vsstokes}. Meanwhile, because of the saddle-point-type essence, the problem is also different from the Steklov eigenvalue problem discussed in \cite{Xie.HIMA}. We therefore construct different theory framework and interpret the eigenvalue problem in mixed formulation as the eigenvalue problem of a generalized symmetric operator rather than a self-adjoint one, and accomplish the theoretical analysis. The differences between our theory and the existing theory for elliptic or saddle point problems include: (1) we represent some existing results which are originally in variational formulation  into operator formulation, and then present error estimation in that context; the operator formulation can bridge the gap between the biharmonic problem and the classical theory of spectral approximation, and can avoid complicated appearance especially for the mixed formulation; (2) we figure out some properties of generalized symmetric operators which are not necessarily self-adjoint; and (3) in our theory, we do not try to interpret the problem as a restrained problem on primal variables or one on Lagrangian multipliers, which is usually done for saddle-point problem; this makes the algorithm construction and theoretical analysis more straightforward. 

The remaining of the paper is organized as follows. In Section \ref{sec:sa}, we present the theory of spectral approximation of the generalized symmetric operators. Some existing results are restated and re-proved, and some new results are presented. In Section \ref{sec:mm}, we present a mixed formulation of the biharmonic eigenvalue problem, and construct its (single-level) discretization schemes. A multi-level algorithm is then constructed accordingly. Both the single- and multi-level algorithms are optimal in accuracy, and the multi-level one also possesses optimal computational cost. The theoretical proof is obtained under the framework discussed in Section \ref{sec:mm}. Numerical examples are then given in Section \ref{sec:ne} with respect to both single- and multi-level methods. Finally, in Section \ref{sec:cr}, some concluding remarks and further discussion are given.

%
%
%
\section{Spectral approximation of generalized symmetric compact operators}
\label{sec:sa} 

In this section, we present some known and new results, including
\begin{itemize}
\item[--] an estimate of spectral projection operator (Lemma \ref{lem:proj});
\item[--] an multi-level algorithm (Algorithm \ref{alg:mlalg}) and its convergence estimate (Theorem \ref{thm:cmla});
\item[--] spectral approximation of generalized symmetric operator (Lemmas \ref{lem:listasT},\ref{lem:listasTdis} and \ref{lem:estgsev});
\item[--] corresponding results in variational form (Lemma \ref{lem:listTevvf}, Algorithm \ref{alg:variational} and Theorem \ref{thm:cmlavf}).
\end{itemize}
Some bibliographic comments are given around.
\subsection{Preliminaries}
In this subsection, we collect some preliminaries from Chapter II of \cite{Babuska.I;Osborn.J1991}.
%
%

Let $H$ be a Hilbert space, and $T$ be a compact operator on $H$. Let $\mu$ be a nonzero eigenvalue of $T$ with algebraic multiplicities $m$. Denote the eigenspace $M(\mu):=\{u\in H:Tu=\mu u\}$. Let $\Gamma_\mu$ be a circle on the complex plane centered at $\mu$ which encloses no other points of $\sigma(T)$. Let $\{T_h\}_{0<h\leqslant 1}$ be a family of compact operators that converges to $T$ in norm. Then for $h$ sufficiently small, there exist $m$ eigenvalues of $T_h$, counting multiplicities, located inside $\Gamma_\mu$. Denote them by $\mu_{i,h}, i=1,\cdots,m$. Let $u_{i,h}$ be the eigenvectors of $T_h$ with respect to $\mu_{i,h}$. Denote $M_h(\mu):={\rm{ span}}\{u_{i,h}\}_{i=1,\dots,m}$. Then $M_h(\mu)$ is the approximation of $M(\mu)$, measured by the {\bf gap} between them.

A {\bf gap} between two closed subspaces $M$ and $N$ of a Banach space $X$ is defined by
$$
\hat{\delta}(M,N)=\max(\delta(M,N),\delta(N,M)),  \mbox{with}\ \delta(M,N)=\sup_{x\in M, \|x\|=1}{\rm dist}(x,N).
$$

\begin{lemma}\label{lem:delta}(\cite{Babuska.I;Osborn.J1991,Kato1958} )
If $\dim M=\dim N<\infty$, then $\delta(N,M)<\delta(M,N)[1-\delta(M,N)]^{-1}$.
\end{lemma}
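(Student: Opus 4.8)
The plan is to reduce everything to the construction of a single linear map between $M$ and $N$ that is close to the identity, and then to read off the bound by a one-line estimate. Write $\delta:=\delta(M,N)$. Since $0$ belongs to every subspace, one has $\delta\le 1$ and $\delta(N,M)\le 1$, so only the case $\delta<1$ needs attention (if $\delta=1$ the right-hand side is $+\infty$ and there is nothing to prove). Because $M$ and $N$ are finite dimensional, the distance ${\rm dist}(\,\cdot\,,N)$ is attained, so for each $x\in M$ there is $w\in N$ with $\|x-w\|\le\delta\|x\|$. The target of the construction is a linear bijection $A\colon M\to N$ satisfying the near-identity bound $\|Ax-x\|\le\delta\|x\|$ for all $x\in M$.

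Granting such an $A$, the lemma is immediate. Given $y\in N$ with $\|y\|=1$, set $x:=A^{-1}y\in M$, so that $Ax=y$. Then ${\rm dist}(y,M)\le\|y-x\|=\|Ax-x\|\le\delta\|x\|$, while the reverse triangle inequality $\|y\|=\|Ax\|\ge\|x\|-\|Ax-x\|\ge(1-\delta)\|x\|$ forces $\|x\|\le(1-\delta)^{-1}$. Combining the two estimates yields ${\rm dist}(y,M)\le\delta(1-\delta)^{-1}$, and taking the supremum over the unit sphere of $N$ gives $\delta(N,M)\le\delta[1-\delta]^{-1}$. Note that $\delta=0$ forces every unit vector of $M$ to lie in the closed space $N$, hence $M\subseteq N$ and, by equality of dimensions, $M=N$; so for $M\ne N$ one has $0<\delta<1$, and the strict inequality of the statement follows from the standard compactness refinement (the supremum defining $\delta(N,M)$ is attained and equality throughout the chain above is ruled out), exactly as in \cite{Kato1958,Babuska.I;Osborn.J1991}.

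It remains to build $A$, and this is where both finite-dimensionality and the \emph{equality} of dimensions are essential, and where I expect the only real obstacle to lie. When $H$ is a Hilbert space — the setting of all later applications in this paper — I would take $A:=P_N|_M$, the orthogonal projection onto $N$ restricted to $M$: linearity is clear, $\|Ax-x\|={\rm dist}(x,N)\le\delta\|x\|$ by definition, and $A$ is injective since $Ax=0$ means ${\rm dist}(x,N)=\|x\|$, i.e. $\|x\|\le\delta\|x\|$, whence $x=0$ because $\delta<1$; an injective linear map between spaces of equal finite dimension is automatically surjective, so $A$ is the desired bijection. The genuinely delicate point is the general Banach-space case, for which no orthogonal projection is available and the metric (best-approximation) projection onto $N$ need be neither linear nor single-valued unless the norm is strictly convex. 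There one must replace the linear-algebra surjectivity step by a topological one — a continuity/degree argument for the normalized best-approximation map between the unit spheres of $M$ and $N$, which have the same dimension — to guarantee that every direction of $N$ is represented; this is the content concealed in the citation to \cite{Kato1958}. For the present paper the Hilbert-space construction above is sufficient.
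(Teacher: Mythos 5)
The paper never proves this lemma: it is quoted directly from \cite{Babuska.I;Osborn.J1991,Kato1958} with no argument given, so your proposal cannot be compared to an internal proof and must stand on its own. In the setting the paper actually uses, it does. The reduction to a linear near-identity bijection $A\colon M\to N$ is the standard (essentially Kato's) mechanism, and your two estimates --- $\mathrm{dist}(y,M)\le\|Ax-x\|\le\delta\|x\|$ and $\|x\|\le(1-\delta)^{-1}$ from the reverse triangle inequality --- correctly yield $\delta(N,M)\le\delta\,(1-\delta)^{-1}$. In a Hilbert space the choice $A=P_N|_M$ is legitimate: the orthogonal projection realizes the best approximation, injectivity follows from $\delta<1$, and equality of finite dimensions upgrades injectivity to bijectivity. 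Since every invocation of the lemma in this paper (the remark following Theorem \ref{thm:cmla}, and the spectral estimates of Section \ref{sec:mm}) concerns subspaces of the Hilbert spaces $H$ and $V$, your argument covers all of its uses here.

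Two caveats. First, the strict inequality as stated fails for $M=N$, where both sides vanish; you implicitly isolate this by observing $\delta=0\Leftrightarrow M=N$, and for $0<\delta<1$ your claim that equality cannot persist through the chain is correct and can be made rigorous in the Hilbert case: by the Pythagorean identity $\|x^*\|^2=\|P_Nx^*\|^2+\mathrm{dist}(x^*,N)^2$, equality in your chain forces $\delta=0$, a contradiction (indeed one gets the sharper bound $\delta(N,M)\le\delta/\sqrt{1-\delta^2}$). Second, the lemma is stated for subspaces of a Banach space, and there your proof is genuinely incomplete: the ``normalized best-approximation map'' you propose to feed into a degree argument need not be continuous --- an obstacle you yourself point out --- so the argument cannot be run on it directly. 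A complete Banach-space proof replaces it with a continuous \emph{odd} selection of the set-valued near-best-approximation map (Michael's selection theorem plus symmetrization) and then applies Borsuk's theorem to conclude the normalized map $S_M\to S_N$ has nonzero degree, hence is surjective. Since you flag this gap explicitly and defer it to \cite{Kato1958}, and since the paper never needs the Banach generality, this is a scoping limitation rather than an error in what you prove.
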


\begin{lemma}\label{lem:hbthm7.1} 
(\cite{Babuska.I;Osborn.J1991}, Theorem 7.1.) There is a constant $C$ independent of $h$, such that
$$
\hat{\delta}(M(\mu),M_h(\mu))\leqslant C\|(T-T_h)|_{M(\mu)}\|,
$$
for small $h$, where $(T-T_h)|_{M(\mu)}$ denotes the restriction of $T-T_h$ to $M(\mu)$.
\end{lemma}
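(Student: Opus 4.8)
The plan is to realize the two eigenspaces as ranges of Riesz spectral projections and to reduce the gap estimate to a resolvent estimate along $\Gamma_\mu$. I would define
$$
E=\frac{1}{2\pi i}\int_{\Gamma_\mu}(z-T)^{-1}\,dz,\qquad E_h=\frac{1}{2\pi i}\int_{\Gamma_\mu}(z-T_h)^{-1}\,dz,
$$
the spectral projections associated with the part of the spectrum enclosed by $\Gamma_\mu$. Then $E$ is a projection with $M(\mu)\subseteq{\rm Range}(E)$ and $Eu=u$ for every eigenvector $u\in M(\mu)$, while ${\rm Range}(E_h)=M_h(\mu)$ once $h$ is small enough that exactly $m$ eigenvalues of $T_h$ lie inside $\Gamma_\mu$. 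In the (generalized) symmetric setting relevant here $\mu$ is semisimple, so $M(\mu)={\rm Range}(E)$ and $\dim M(\mu)=\dim M_h(\mu)=m$.

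First I would bound the one-sided gap $\delta(M(\mu),M_h(\mu))$. For $u\in M(\mu)$ with $\|u\|=1$ the vector $E_hu$ lies in $M_h(\mu)$, hence
$$
{\rm dist}(u,M_h(\mu))\leqslant\|u-E_hu\|=\|(E-E_h)u\|,
$$
where I used $Eu=u$. Taking the supremum over the unit sphere of $M(\mu)$ gives $\delta(M(\mu),M_h(\mu))\leqslant\|(E-E_h)|_{M(\mu)}\|$, so the task is reduced to estimating $E-E_h$ on $M(\mu)$. The core step is the resolvent identity $(z-T)^{-1}-(z-T_h)^{-1}=(z-T_h)^{-1}(T-T_h)(z-T)^{-1}$, which I would insert under the integral sign. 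For $u\in M(\mu)$ the rightmost factor simplifies, $(z-T)^{-1}u=(z-\mu)^{-1}u$, so that
$$
(E-E_h)u=\frac{1}{2\pi i}\int_{\Gamma_\mu}\frac{1}{z-\mu}\,(z-T_h)^{-1}(T-T_h)u\,dz.
$$
Estimating in norm and using that $|z-\mu|$ equals the constant radius of $\Gamma_\mu$, everything reduces to a uniform bound $\sup_{z\in\Gamma_\mu}\|(z-T_h)^{-1}\|\leqslant C_0$ valid for small $h$; this yields $\|(E-E_h)u\|\leqslant C_0\|(T-T_h)u\|$ and therefore $\delta(M(\mu),M_h(\mu))\leqslant C_0\|(T-T_h)|_{M(\mu)}\|$.

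I expect the uniform resolvent bound to be the main obstacle, and I would derive it from the hypothesis that $T_h\to T$ in norm. Since $z\mapsto(z-T)^{-1}$ is continuous on the compact curve $\Gamma_\mu$ (which avoids $\sigma(T)$ by construction), it is bounded there by some $C_1$. Writing $z-T_h=(z-T)[I+(z-T)^{-1}(T-T_h)]$ allows me to invert by a Neumann series as soon as $C_1\|T-T_h\|<\tfrac12$, giving $\|(z-T_h)^{-1}\|\leqslant 2C_1=:C_0$ uniformly on $\Gamma_\mu$ for all sufficiently small $h$; this is also what guarantees that the $m$ perturbed eigenvalues stay inside $\Gamma_\mu$, legitimizing the definition of $E_h$.

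Finally, to pass from the one-sided estimate to the symmetric gap $\hat\delta$, I would invoke Lemma \ref{lem:delta}. Because $\dim M(\mu)=\dim M_h(\mu)=m<\infty$ and $\delta(M(\mu),M_h(\mu))\to 0$ as $h\to 0$, Lemma \ref{lem:delta} gives $\delta(M_h(\mu),M(\mu))\leqslant\delta(M(\mu),M_h(\mu))[1-\delta(M(\mu),M_h(\mu))]^{-1}\leqslant 2\,\delta(M(\mu),M_h(\mu))$ for $h$ small. Combining the two directions yields $\hat\delta(M(\mu),M_h(\mu))\leqslant C\|(T-T_h)|_{M(\mu)}\|$ with $C=2C_0$, which is the claim.
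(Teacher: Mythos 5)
Your proof is correct, but note that the paper never proves this lemma at all: it is quoted verbatim as Theorem 7.1 of Babu\v{s}ka--Osborn, so there is no internal proof to compare against. Your argument is the standard resolvent-calculus proof of that theorem, and it is exactly the machinery the paper itself deploys one step later in the proof of Lemma \ref{lem:proj}: the Riesz projections $E$, $E_h$, the identity $(z-T)^{-1}-(z-T_h)^{-1}=(z-T_h)^{-1}(T-T_h)(z-T)^{-1}$ combined with $(z-T)^{-1}u=(z-\mu)^{-1}u$ on $M(\mu)$, a uniform bound on $\|(z-T_h)^{-1}\|$ along $\Gamma_\mu$ obtained from norm convergence (your Neumann-series step makes explicit what the paper simply asserts via ``$\|T-T_h\|_H\to0$ implies the sup is finite''), and finally Lemma \ref{lem:delta} to convert the one-sided estimate $\delta(M(\mu),M_h(\mu))\leqslant C\|(T-T_h)|_{M(\mu)}\|$ into the symmetric gap $\hat\delta$, using $\dim M(\mu)=\dim M_h(\mu)=m$. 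One point worth flagging: you correctly observe that ${\rm Range}(E)=M(\mu)$ requires $\mu$ to be semisimple (otherwise ${\rm Range}(E)$ is the generalized eigenspace), and likewise ${\rm Range}(E_h)=M_h(\mu)$ presumes the discrete eigenvalues inside $\Gamma_\mu$ carry no Jordan structure; the paper assumes both identifications silently in its preliminaries, so your treatment is consistent with its conventions, but in the generalized-symmetric (non-self-adjoint) setting of Section \ref{sec:sa} this is an assumption rather than an automatic fact.
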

Define the projection operators with respect to $\mu$ by
\begin{equation}
E=\frac{1}{2\pi i}\int_{\Gamma_\mu} R_z(T) dz=\frac{1}{2\pi i}\int_{\Gamma_\mu} (z-T)^{-1} dz,\quad E_h=\frac{1}{2\pi i}\int_{\Gamma_\mu} R_z(T_h) dz=\frac{1}{2\pi i}\int_{\Gamma_\mu} (z-T_h)^{-1} dz.
\end{equation}
Then ${\rm range}(E)=M(\mu)$, and ${\rm range}(E_h)=M_h(\mu)$. We refer to \cite{Babuska.I;Osborn.J1991} for more discussion.

\subsection{Spectral approximation by the aid of projection operator}

For $G$ a subspace of $H$ with $H=G\oplus G^c$, denote by $P_G$ the projection operator onto $G$ along $G^c$. Let $\{G_h\}$ be a sequence of subspaces of $H$, with the indices  $h\to 0$, and $G_h\to H$.  Define $T_{G_h}:=P_{G_h}T$, then $\{T_{G_h}\}$ are approximations of $T$. We know that if $\|P_{G_h}u-u\|_H\to 0$ as $h\to 0$ for any $u\in H$,  then $\|T-T_{G_h}\|_H\to 0$ as $h\to 0$.

We write for short $P_h$ the projection onto $G_h$, and $T_h:=P_hT$. We assume $T_h\to T$ in norm as $h\to 0$. Corresponding to $M(\mu)$ and $M_h(\mu)$, we have the lemma below. 

\begin{lemma}\label{lem:proj}
There is a constant $C_\mu$, such that
\begin{eqnarray}
&\|u-E_hu\|_H\leqslant C_\mu\|(I-P_h)u\|_H,\ \forall\,u\in M(\mu),\label{eq:ebddbp}
\\
 &\|T(u-E_hu)\|_H\leqslant C_\mu\|T(I-P_h)u\|_H,\ \forall\,u\in M(\mu).\label{eq:tebddbtp}
\end{eqnarray}
\end{lemma}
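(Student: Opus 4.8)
The plan is to work directly with the Dunford--Riesz contour representations of $E$ and $E_h$ and to reduce both inequalities to pointwise-in-$z$ resolvent estimates on the fixed circle $\Gamma_\mu$. Since $u\in M(\mu)$ satisfies $Eu=u$, I would first rewrite $u-E_hu=(E-E_h)u$ and expand the difference of the two Cauchy integrals. The crucial simplification is that on $M(\mu)$ the resolvent acts as a scalar, $(z-T)^{-1}u=(z-\mu)^{-1}u$, and that $(T_h-T)u=(P_h-I)Tu=-\mu(I-P_h)u$ because $Tu=\mu u$ and $T_h=P_hT$. Feeding these into the resolvent identity $(z-T_h)^{-1}-(z-T)^{-1}=(z-T_h)^{-1}(T_h-T)(z-T)^{-1}$ and integrating over $\Gamma_\mu$ yields the working formula
\begin{equation*}
u-E_hu=\frac{\mu}{2\pi i}\int_{\Gamma_\mu}(z-\mu)^{-1}(z-T_h)^{-1}(I-P_h)u\,dz,
\end{equation*}
in which the error source $(I-P_h)u$ has been isolated on the right.

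For \eqref{eq:ebddbp} I would then simply take norms under the integral sign. Two facts make this immediate: on $\Gamma_\mu$ the factor $|z-\mu|$ equals the fixed radius of the circle, so $(z-\mu)^{-1}$ contributes only a constant; and the resolvents $(z-T_h)^{-1}$ are \emph{uniformly} bounded on $\Gamma_\mu$ for all small $h$. The latter I would deduce from the assumed norm convergence $T_h\to T$: since $\Gamma_\mu$ lies in the resolvent set of $T$, the quantity $\sup_{z\in\Gamma_\mu}\|(z-T)^{-1}\|$ is finite, and writing $z-T_h=(z-T)[I-(z-T)^{-1}(T-T_h)]$ together with a Neumann series shows $\sup_{z\in\Gamma_\mu}\|(z-T_h)^{-1}\|$ stays bounded once $\|T-T_h\|$ is small. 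Multiplying these uniform bounds by the length of $\Gamma_\mu$ gives \eqref{eq:ebddbp}.

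The second estimate \eqref{eq:tebddbtp} is the delicate one, and I expect it to be the main obstacle: applying $T$ to the formula above leaves the operator $T(z-T_h)^{-1}(I-P_h)u$ inside the integral, and the naive bound controls it only by $\|(I-P_h)u\|$, not by the smaller quantity $\|T(I-P_h)u\|$ one needs. The device I would use is to invoke the resolvent identity a second time, but now in the order $(z-T_h)^{-1}=(z-T)^{-1}+(z-T_h)^{-1}(T_h-T)(z-T)^{-1}$, and then commute $T$ through its own resolvent via $T(z-T)^{-1}=(z-T)^{-1}T$ while using $T_h-T=-(I-P_h)T$. A short rearrangement turns the integrand into
\begin{equation*}
T(z-T_h)^{-1}(I-P_h)u=(z-T)^{-1}T(I-P_h)u-T(z-T_h)^{-1}(I-P_h)(z-T)^{-1}T(I-P_h)u,
\end{equation*}
so that in \emph{both} terms the factor $T(I-P_h)u$ now appears on the right. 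Every remaining operator --- $(z-T)^{-1}$, $T(z-T_h)^{-1}=T\,(z-T_h)^{-1}$, and $I-P_h$ --- is uniformly bounded on $\Gamma_\mu$ for small $h$ by the same resolvent argument as above, so taking norms under the integral and again using that $|z-\mu|$ is constant delivers \eqref{eq:tebddbtp} with a constant $C_\mu$ depending only on $\mu$, $\Gamma_\mu$, $\|T\|$ and the uniform resolvent bound. The one point requiring care is to order the two resolvent identities so that the genuinely $O(1)$ operator $T(z-T_h)^{-1}$ is never asked to act on $(I-P_h)u$ directly; this ordering is exactly what lets the smoothing of $T$ show up as the sharper right-hand side $\|T(I-P_h)u\|$.
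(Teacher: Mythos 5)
Your proposal is correct and follows essentially the same route as the paper: both reduce $u-E_hu$ to the contour integral $\frac{\mu}{2\pi i}\int_{\Gamma_\mu}(z-T_h)^{-1}(I-P_h)\frac{u}{z-\mu}\,dz$ via the resolvent identity and $(T-T_h)u=\mu(I-P_h)u$, and for \eqref{eq:tebddbtp} both insert the second identity $(z-T_h)^{-1}=\bigl(I-(z-T_h)^{-1}(I-P_h)T\bigr)(z-T)^{-1}$ and commute $T$ with $(z-T)^{-1}$ so that $T(I-P_h)u$ becomes the rightmost factor; your expanded integrand is exactly the paper's factored one. The only cosmetic difference is that you spell out the Neumann-series argument for the uniform resolvent bound on $\Gamma_\mu$, which the paper merely asserts from norm convergence.
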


\begin{proof}
Direct calculation leads to that
\begin{eqnarray}
&u-E_hu=Eu-E_hu=\frac{1}{2\pi i}\int_{\Gamma_\mu}(z-T_h)^{-1}(T-T_h)\frac{u}{z-\mu}dz\nonumber
\\
&=\frac{\mu}{2\pi i}\int_{\Gamma_\mu}(z-T_h)^{-1}(I-P_h)\frac{u}{z-\mu}dz=\frac{\mu}{2\pi i}\int_{\Gamma_\mu}(z-T_h)^{-1}(I-P_h)^2\frac{u}{z-\mu}dz,
\end{eqnarray}
where it has been used that $(T-T_h)u=\mu(I-P_h)u$, and $(I-P_h)^2=(I-P_h)$. Thus
$$
\|u-E_h(\mu)u\|_H\leqslant \frac{1}{2\pi}[2\pi{\rm rad}(\Gamma_\mu)]\sup_{z\in\Gamma_\mu,h>0}\|(z-T_h)^{-1}\|_H\frac{\|(T-T_h)u\|_H}{{\rm rad}(\Gamma_\mu)}=|\mu|\sup_{z\in\Gamma_\mu,h>0}\|(z-T_h)^{-1}\|_H\|(I-P_h)u\|_H.
$$
Now $\|T-T_h\|_H\to 0$ implies that
$$
|\mu|\sup_{z\in\Gamma_\mu,h>0}\|(z-T_h)^{-1}\|_H<\infty.
$$
This proves \eqref{eq:ebddbp}. Further, note that
$$
(z-T_h)^{-1}=(I-(z-T_h)^{-1}(I-P_h)T)(z-T)^{-1},
$$
and we have
\begin{eqnarray}
&T(u-E_hu)=\frac{\mu}{2\pi i}\int_{\Gamma_\mu} T(z-T_h)^{-1}(I-P_h)\frac{u}{z-\mu}dz
\nonumber\\
& =\frac{\mu}{2\pi i}\int_{\Gamma_\mu} (I-T(z-T_h)^{-1}(I-P_h))T(z-T)^{-1}(I-P_h)\frac{u}{z-\mu}dz
\nonumber\\
&=\frac{\mu}{2\pi i}\int_{\Gamma_\mu} (I-T(z-T_h)^{-1}(I-P_h))(z-T)^{-1}T(I-P_h)\frac{u}{z-\mu}dz.\qquad\ \
\end{eqnarray}
Thus
$$
\|T(u-E_hu)\|_H\leqslant \frac{|\mu|}{2\pi}[2\pi{\rm rad}(\Gamma)]\sup_{z\in{\Gamma_\mu},h>0}\|(I-T(z-T_h)^{-1}(I-P_h))(z-T)^{-1}\|_H\frac{\|T(I-P_h)u\|_H}{{\rm rad}(\Gamma)}.
$$
Since $\|(z-T_h)^{-1}\|_H$ and $\|(z-T)^{-1}\|_H$ are uniformly bounded for $z\in\Gamma$ and $h>0$, we obtain
$$
\|T(u-E_hu)\|_H\leqslant C_\mu \|T(I-P_h)u\|_H.
$$
The proof is completed.
\end{proof}
\begin{remark}
Inequality \eqref{eq:ebddbp} is $($3.16a$)$ of \cite{Babuska1989}, while \eqref{eq:tebddbtp} is a generalisation of $($3.16c$)$ of \cite{Babuska1989}.
\end{remark}

%
%
%
%
%
%
%
%
%


%

%
%
\subsection{A multi-level algorithm for eigenvalue problem with projection approximation}

The algorithm is the same as the algorithms employed in \cite{Xie.HJCP,LinXie2015,LinLuoXie2013}, but is rewritten with respect to a general context of operator. The error estimation is then reformed accordingly.

\begin{algorithm}\label{alg:mlalg}
A multi-level algorithm for $k$ eigenvalues of $T$.

%
\begin{description}
\item[Step 0] Construct a series of nested spaces $G_0\subset G_1\subset\dots\subset G_N\subset H$. Set $\widetilde{G}_0=G_0$.
\item[Step 1] For $i=1:1: N$, generate auxiliary spaces $\widetilde{G}_i$ recursively.
\begin{description}
\item[Step 1.i.1]
Define projection operators $\widetilde{P}_{i-1}:H\to \widetilde{G}_{i-1}$, and solve eigenvalue problem for its first $k$ eigenpairs $\{(\tilde{\mu}_j^{i-1},\tilde{u}_j^{i-1})\}_{j=1,\dots,k}$
$$
 \widetilde{P}_{i-1}T\tilde{u}=\tilde{\mu}\tilde{ u};
$$
\item[Step 1.i.2] Define projection operators $P_i:H\to G_i$. Compute
$$
\hat{u}_j^i=\frac{1}{\tilde{\mu}_j^{i-1}}P_iT\tilde{u}_j^{i-1},\ \ j=1,\dots,k;
$$
\item[Step 1.i.3] Set
$$
\widetilde{G}_i=G_0+{\rm span}\{\hat{u}_j^i\}_{j=1}^k.
$$
\end{description}
\item[Step 2] Define projection operators $\widetilde{P}_N:H\to \widetilde{G}_N$, solve eigenvalue problem for its first $k$ eigenpairs $\{(\tilde{\mu}_j^N,\tilde{u}_j^N)\}_{j=1,\dots,k}$:
$$
\widetilde{P}_NT\tilde{u}=\tilde{\mu}\tilde{ u}.
$$

\end{description}

\end{algorithm}

\begin{remark}
In the algorithm, the ``first" $k$ eigenvalues imply the $k$ modulus-biggest eigenvalues. The main work of the algorithm is to solve eigenvalue problems of $\widetilde{T}_i:=\widetilde{P}_iT$ and to compute the action of $T_i:=P_iT$ on every level.
\end{remark}

Let $\mu$ be a nonzero eigenvalue of $T$ with multiplicity $m$, and denote $M(\mu)=\{u\in H:Tu=\mu u,\ \|u\|_H=1\}.$ Let $\tilde{\mu}_j^i$ and $\tilde{u}_j^i$, $j=1:m$, $i=0:N$ be the eigenpairs generated by the algorithm as approximations to $\mu$ and $M(\mu)$. Specifically, denote $\widetilde{M}_i(\mu):={\rm span}\{\tilde{u}_j^i\}_{j=1}^m.$

\paragraph{\bf Stability Constant} Let $\{\varphi_i\}_{i=1}^n\subset H$ be $n$ unit vectors. Denote the stability constant of $\{\varphi_i\}_{i=1}^n$ by
\begin{equation}
\theta(\varphi_1,\dots,\varphi_n):=\displaystyle\inf_{\alpha\in\mathbb{R}^n,\alpha\neq \bf{0}}\frac{\|\sum_{i=1}^n\alpha_i\varphi_i\|_H^2}{\sum_{i=1}^n\|\alpha_i\varphi_i\|_H^2}.
\end{equation}
The stability constant of $\{\varphi_i\}_{i=1}^n$ denotes to what extent the vectors are nearly orthogonal. If $\theta(\varphi_1,\dots,\varphi_n)=1$, then $\{\varphi_i\}_{i=1}^n$ are orthogonal to each other, and if $\theta(\varphi_1,\dots,\varphi_n)=0$, then $\{\varphi_i\}_{i=1}^n$ are linearly dependent.

\begin{lemma}\label{lem:scdist}
Let $\varphi\in {\rm span}\{\varphi_i\}_{i=1}^n$ be a unit vector, and $V\neq \Phi$ be a closed subspace of $H$. Then
\begin{equation}
{\rm dist}(\varphi,V)\leqslant \sqrt{2n\,\theta(\varphi_1,\dots,\varphi_n)^{-1}}\max_{1\leqslant i\leqslant n}{\rm dist}(\varphi_i,V).
\end{equation}
Here we define $\theta(\varphi_1,\dots,\varphi_n)^{-1}=\infty$, if $\theta(\varphi_1,\dots,\varphi_n)=0$.
\end{lemma}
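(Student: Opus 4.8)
The plan is to reduce everything to one structural observation—that $x\mapsto{\rm dist}(x,V)$ is a seminorm on $H$—and then to control the expansion coefficients of $\varphi$ through the stability constant $\theta$. First I would record the elementary fact that, because $V$ is a (closed) subspace, the distance functional is positively homogeneous, ${\rm dist}(\lambda x,V)=|\lambda|\,{\rm dist}(x,V)$, and subadditive, ${\rm dist}(x+y,V)\le{\rm dist}(x,V)+{\rm dist}(y,V)$; the former uses $\lambda V=V$ and the latter follows by adding near-optimal approximants $v_1,v_2\in V$ and applying the triangle inequality. (The hypothesis $V\neq\Phi$ only serves to make the distance finite, which is automatic for a subspace.) Writing the given unit vector as $\varphi=\sum_{i=1}^n\alpha_i\varphi_i$, these two properties immediately give
\[
{\rm dist}(\varphi,V)\le\sum_{i=1}^n|\alpha_i|\,{\rm dist}(\varphi_i,V)\le\Big(\sum_{i=1}^n|\alpha_i|\Big)\max_{1\le i\le n}{\rm dist}(\varphi_i,V).
\]

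The second step is the coefficient bound, which is exactly where the stability constant is used. Since each $\varphi_i$ is a unit vector, the denominator in the definition of $\theta$ collapses to $\sum_i\|\alpha_i\varphi_i\|_H^2=\sum_i\alpha_i^2=|\alpha|^2$, so the defining infimum says precisely that $\|\sum_i\alpha_i\varphi_i\|_H^2\ge\theta\,|\alpha|^2$ for every $\alpha\in\mathbb{R}^n$. Applying this to the coefficient vector of $\varphi$ and invoking $\|\varphi\|_H=1$ yields $|\alpha|^2\le\theta^{-1}$. Combining with the Cauchy--Schwarz estimate $\sum_i|\alpha_i|\le\sqrt n\,|\alpha|$ gives $\sum_i|\alpha_i|\le\sqrt{n\,\theta^{-1}}$, and substituting into the display above proves the inequality with the constant $\sqrt{n\,\theta^{-1}}$. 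The stated $\sqrt{2n\,\theta^{-1}}$ then follows a fortiori, the factor $2$ providing only slack.

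Finally I would dispose of the degenerate case $\theta=0$: here $\theta^{-1}=\infty$ by the stated convention, so the right-hand side is $+\infty$ whenever $\max_i{\rm dist}(\varphi_i,V)>0$ and the inequality is vacuous; and if instead $\max_i{\rm dist}(\varphi_i,V)=0$, then every $\varphi_i\in V$ (as $V$ is closed), so $\varphi\in{\rm span}\{\varphi_i\}\subset V$ and the left-hand side vanishes. I do not expect a genuine obstacle in this lemma: it is essentially the triangle inequality together with Cauchy--Schwarz. The only point that warrants care is the passage through $\theta$, namely observing that for \emph{unit} generators the denominator reduces to the Euclidean norm $|\alpha|^2$, so that $\theta$ controls the coefficient vector directly; this is the hinge on which the whole estimate turns.
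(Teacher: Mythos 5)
Your proposal is correct and follows essentially the same route as the paper's proof: expand $\varphi$ in the $\varphi_i$, approximate each $\varphi_i$ by elements of $V$, apply the triangle inequality and Cauchy--Schwarz, and control the coefficient vector via the definition of $\theta$ together with $\|\varphi\|_H=1$ (the paper does this with explicit best approximants $v_i$ and a double-sum expansion of $\|\sum_i\beta_i(\varphi_i-v_i)\|_H^2$, while you package the same steps through the seminorm property of ${\rm dist}(\cdot,V)$). Your bookkeeping even yields the slightly sharper constant $\sqrt{n\,\theta^{-1}}$ in place of $\sqrt{2n\,\theta^{-1}}$, and your explicit treatment of the degenerate case $\theta=0$ is a small but welcome addition the paper leaves implicit.
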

\begin{proof}
Let $v_i\in V$ such that $\|\varphi_i-v_i\|_H={\rm dist}(\varphi_i,V)$. Let $\varphi=\sum_i\beta_i\varphi_i$, such that $\|\varphi\|_H=1$. Then
$$
{\rm dist}(\varphi,V)^2\leqslant \|\varphi-\sum_i\beta_iv_i\|_H^2=\|\sum_i\beta_i(\varphi_i-v_i)\|_H^2\leqslant \sum_{i,j}|\beta_i\beta_j|\|\varphi_i-v_i\|_H\|\varphi_j-v_j\|_H\leqslant [2n\sum_i\beta_i^2]\max_i\|\varphi_i-v_i\|_H^2.
$$
The proof is completed by the definition of $\theta(\varphi_1,\dots,\varphi_n)$.
\end{proof}


\begin{theorem} \label{thm:cmla}
Assume $G_0$ is big enough, such that $\delta(H,G_0)$ is sufficiently small. Assume for the projections that $\displaystyle\min_{1\leqslant l\leqslant N}\inf_{u\in H\setminus G_l,v\in G_l}\frac{\|u-v\|_H}{\|u-P_lu\|_H}\geqslant C_0$, and assume for the computed eigenvectors that $\displaystyle\inf_{1\leqslant l\leqslant N}\theta(\tilde{u}_1^l,\dots,\tilde{u}_m^l)\geqslant \theta_0$. There exist constants $\beta_1$ and $\beta_2$ dependent of $\mu$, $C_0$ and $\theta_0$, such that,
\begin{equation}
\delta(M(\mu),\widetilde{G}_N)\leqslant \beta_1\sum_{l=0}^{N}[\prod_{j=l}^{N-1}(\beta_2\|T-T\widetilde{P}_{j}\|_H)]\delta(M(\mu),G_l),
\end{equation}
%
\end{theorem}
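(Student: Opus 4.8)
The plan is to prove the estimate by induction on the level index, tracking the gap $\delta(M(\mu),\widetilde{G}_l)$ as $l$ runs from $0$ to $N$, and to invoke Lemma~\ref{lem:proj} at each level to convert the abstract gap bound into a product of the contraction-type factors $\beta_2\|T-T\widetilde{P}_j\|_H$. First I would set up the recursion. At level $i$, Step 1.i.1 solves the eigenvalue problem for $\widetilde{T}_{i-1}=\widetilde{P}_{i-1}T$ producing approximate eigenpairs $(\tilde{\mu}_j^{i-1},\tilde{u}_j^{i-1})$, and Step 1.i.2 applies $P_iT/\tilde{\mu}_j^{i-1}$ to lift them into $G_i$. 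The key observation is that the corrected vectors $\hat{u}_j^i$ are, up to a scaling by $1/\tilde{\mu}_j^{i-1}$, the images of the previous-level eigenvectors under $P_iT=T_i$; since $\tilde u_j^{i-1}$ approximately satisfies $\widetilde P_{i-1}T\tilde u_j^{i-1}=\tilde\mu_j^{i-1}\tilde u_j^{i-1}$, one has $T\tilde u_j^{i-1}\approx \tilde\mu_j^{i-1}\tilde u_j^{i-1}$ modulo a term controlled by $\|T-T\widetilde P_{i-1}\|_H$. Thus $\hat u_j^i\approx \tilde u_j^{i-1}$ up to the $P_i$-projection and an error of the stated order.

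Next I would estimate $\delta(\widetilde M_{i}(\mu),\widetilde G_{i})$, or rather the relevant distance of each eigenvector of $M(\mu)$ to $\widetilde G_i$, in terms of its distance to $\widetilde G_{i-1}$. The core inequality should read, schematically,
\begin{equation}
\delta(M(\mu),\widetilde{G}_{i})\leqslant \beta_2\|T-T\widetilde{P}_{i-1}\|_H\,\delta(M(\mu),\widetilde{G}_{i-1})+\beta_1\delta(M(\mu),G_{i}),
\end{equation}
where the first term records the one-step contraction coming from the operator correction and the second term is the intrinsic best-approximation error of the space $G_i$ that is freshly injected at this level through $G_0+\mathrm{span}\{\hat u_j^i\}$. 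To produce the first term I would use Lemma~\ref{lem:proj}, specifically the bound \eqref{eq:tebddbtp} controlling $\|T(u-E_hu)\|_H$ by $\|T(I-P_h)u\|_H$, applied at level $i-1$, together with the hypothesis $\inf_l\theta(\tilde u_1^l,\dots,\tilde u_m^l)\geqslant\theta_0$ and Lemma~\ref{lem:scdist} to pass from individual eigenvectors to the spanned subspace without losing more than a $\theta_0$-dependent constant. The projection-quality hypothesis $\inf_{u,v}\|u-v\|_H/\|u-P_lu\|_H\geqslant C_0$ guarantees that approximation in $\widetilde G_{i-1}$ is comparable to approximation by the raw projection $P_{i-1}$, so that the auxiliary spaces do not degrade the rate relative to the nested spaces $G_l$.

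Finally I would unroll the one-step recursion. Iterating the displayed inequality from $i=N$ down to $i=0$ and using $\widetilde G_0=G_0$ produces exactly the telescoped sum
\begin{equation}
\delta(M(\mu),\widetilde{G}_N)\leqslant \beta_1\sum_{l=0}^{N}\Bigl[\prod_{j=l}^{N-1}(\beta_2\|T-T\widetilde{P}_{j}\|_H)\Bigr]\delta(M(\mu),G_l),
\end{equation}
with the convention that the empty product at $l=N$ equals $1$. The assumption that $\delta(H,G_0)$ is sufficiently small is what makes the norm convergence $T_h\to T$ effective at the base level, so that Lemma~\ref{lem:proj} applies with a uniform constant $C_\mu$ absorbed into $\beta_1,\beta_2$; it also ensures each $\tilde\mu_j^{i-1}$ stays bounded away from zero, legitimizing the division in Step 1.i.2. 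The main obstacle I anticipate is the bookkeeping in the one-step estimate: the corrected vector $\hat u_j^i$ is not itself an eigenvector of $\widetilde T_i$, so I must carefully separate the genuine spectral-projection error (handled by Lemma~\ref{lem:proj}) from the correction-iteration error, and control the stability constant $\theta$ of the corrected system uniformly in $i$ so that Lemma~\ref{lem:scdist} can be applied at every level with the same $\theta_0$. Keeping the constants $\beta_1,\beta_2$ independent of $N$ through this iteration — rather than letting them accumulate a factor per level — is the delicate point.
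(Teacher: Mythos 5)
Your proposal is correct and follows essentially the same route as the paper's proof: a one-step recursion of the form $\delta(M(\mu),\widetilde{G}_{i})\lesssim \|T-T\widetilde{P}_{i-1}\|_H\,\delta(M(\mu),\widetilde{G}_{i-1})+\delta(M(\mu),G_{i})$, obtained by comparing the scaled corrected vectors with true eigenvectors through the spectral projection and the bound \eqref{eq:tebddbtp} of Lemma~\ref{lem:proj}, then passing from individual vectors to the subspace via the stability constant and Lemma~\ref{lem:scdist}, and finally unrolling the recursion. The delicate points you flag (that $\hat{u}_j^i$ is not an eigenvector, and that the stability constant of the associated true eigenvectors must be kept uniformly bounded below, for which the paper proves $\theta(u_1^l,\dots,u_m^l)\geqslant\theta_0/2$) are exactly the ones the paper handles.
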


\begin{proof}
By lemma \ref{lem:hbthm7.1},
$$
\hat{\delta}(M(\mu),\widetilde{M}_0(\mu))\leqslant C\sup_{u\in M(\mu)}\|(T-T_0)u\|_H\leqslant C_{\mu,1}\delta(M(\mu),\widetilde{G}_0).
$$
Given $\{\tilde{u}_j^0\}_{j=1}^m$, there exists $\{u_j^0\}\subset M(\mu)$, such that $\gamma_j^0\tilde{u}_j^0=E_0u_j^0$, where $|\gamma_j^0-1|\leqslant C_1\|u_j^0-P_0u_j^0\|_H$ is guaranteed arbitrarily small. Set $\alpha_j^0=\gamma_j^0\tilde{\mu}_j^0/\mu$, then
\begin{eqnarray}
&\|\alpha_j^0\hat{u}_j^1-P_1u_j^0\|_H=\|(\alpha_j^0/\tilde{\mu}_j^0)T_1\tilde{u}_j^0-(1/\mu)P_1Tu_j^0\|_H
\nonumber\\
&=\left|1/\mu\right|\|P_1T(\gamma_j^0\tilde{u}_j^0-u_j^0)\|_H
\leqslant |1/\mu|\|T(u_j^0-E_0u_j^0)\|_H\leqslant C_{\mu,2}\|T-T\widetilde{P}_0\|_H\delta(M(\mu),\widetilde{G}_0).
\end{eqnarray}
Therefore,
$$
\|\alpha_j^0\hat{u}_j^1-u_j^0\|_{H}\leqslant \|\alpha_j^0\hat{u}_j^1-P_1u_j^0\|_{H}+\|P_1u_j^0-u_j^0\|_{H}\leqslant C_{\mu,2}\|T-T\widetilde{P}_0\|_H\delta(M(\mu),\widetilde{G}_0) + \delta(M(\mu),G_1).
$$
{Since $\theta(\tilde{u}_1^0,\dots,\tilde{u}_m^0)\geqslant\theta_0$, we have $\theta(u_1^0,u_2^0,\dots,u_m^0)\geqslant \frac{1}{2}\theta_0$.} Actually,
\begin{eqnarray*}
&\|\sum_i\alpha_iu_i^0\|_H^2=\|\sum_i\alpha_i\tilde{u}_i^0+\sum_i\alpha_i(u_i^0-\tilde{u}_i^0)\|_H\geqslant \frac{3}{4}\|\sum_i\alpha_i\tilde{u}_i^0\|_H^2-3\|\sum_i\alpha_i(u_i^0-\tilde{u}_i^0)\|_H^2 \qquad
\\
&\geqslant \frac{3}{4}\|\sum_i\alpha_i\tilde{u}_i^0\|_H^2-C(m)\max_{i=1,\dots,m}\|(u_i^0-\tilde{u}_i^0)\|_H^2\sum_i\alpha_i^2 \geqslant \frac{3}{4}\|\sum_i\alpha_i\tilde{u}_i^0\|_H^2-C(m)\max_{i=1,\dots,m}\|u_i^0-P_0u_i^0\|\sum_i\alpha_i^2.
\end{eqnarray*}
Namely,
$$
\frac{\|\sum_i\alpha_iu_i^0\|_H^2}{\sum_i\alpha_i^2}\geqslant \frac{3}{4}\frac{\|\sum_i\alpha_i\tilde{u}_i^0\|_H^2}{\sum_i\alpha_i^2}-C(m)\delta(M(\mu),G_0)\geqslant \frac{1}{2}\theta_0,\ \ \mbox{for}\ \delta(M(\mu),G_0)\ \mbox{small enough}.
$$
Therefore, by Lemma \ref{lem:scdist},
\begin{eqnarray}
&\delta(M(\mu),\widetilde{G}_1)\leqslant \sqrt{4m\theta_0^{-1}}\max_{1\leqslant j\leqslant m}\{\delta({\rm span}\{u_j^0\},\widetilde{G}_1)\}
\leqslant \sqrt{4m\theta_0^{-1}}\max_{1\leqslant j\leqslant m}\{\|u_j^0-\alpha_j^0\hat{u}_j^1\|_H\} \nonumber\\
&\leqslant  \sqrt{4m\theta_0^{-1}}(C_{\mu,2}\|T-T\tilde P_0\|_H\delta(M(\mu),\widetilde{G}_0) + \delta(M(\mu),G_1)),
\end{eqnarray}
Similarly, we can obtain that
\begin{equation}
\delta(M(\mu),\widetilde{G}_{l+1}) \leqslant \sqrt{4m\theta_0^{-1}}( C_{\mu,2}\|T-T\widetilde P_l\|_H\delta(M(\mu),\widetilde{G}_l) + \delta(M(\mu),G_{l+1})),\ \ l=1,2,\dots,N-1.
\end{equation}
Therefore,
\begin{equation}
\delta(M(\mu),\widetilde{G}_N)\leqslant \sqrt{4m\theta_0^{-1}}\sum_{l=0}^{N}(\sqrt{4m\theta_0^{-1}}C_{\mu,2})^{(N-l)}[\prod_{j=l}^{N-1}\|T-T\widetilde{P}_{j}\|_H]\delta(M(\mu),G_l).
\end{equation}
The proof is completed by setting $\beta_1=\sqrt{4m\theta_0^{-1}}$ and $\beta_2=\sqrt{4m\theta_0^{-1}}C_{\mu_2}$.
\end{proof}

\begin{remark}
By Lemma \ref{lem:delta}, Lemma  \ref{lem:hbthm7.1}, it follows for $G_0$ big enough that
\begin{equation}
\hat{\delta}(M(\mu),\widetilde{M}_N(\mu))\leqslant \beta_1\sum_{l=0}^{N}[\prod_{j=l}^{N-1}(\beta_2\|T-T\widetilde{P}_{j}\|_H)]\delta(M(\mu),G_l).
\end{equation}
\end{remark}

\begin{remark}
If $T_0$ is a good approximation of $T$, then $|\alpha_j^0-1|$ is small. Therefore, if we modify the algorithm in {\bf Step 1.i.3} by replacing $\hat{u}_j^i$ with some $\breve{u}_j^i\in G_i$ such that $\|\hat{u}_j^i-\breve{u}_j^i\|\leqslant C\delta(M(\mu),G_1)$ for some constant $C$, then the result of the lemma keeps true.
\end{remark}

\subsection{Spectral approximation of generalized symmetric operator}
Let $a(\cdot,\cdot)$ be a bounded symmetric bilinear form defined on the Hilbert space $H$.

\begin{definition}
If for any $u\in H$, there is a unique $v\in H$, such that
$$
a(w,v)=a(Sw,u),\ \forall\,w\in H,
$$
then define $S^{a*}:H\to H$, the adjoint operator of $S$ with respect to $a(\cdot,\cdot)$, by $S^{a*}u:=v$. If for an operator $S:H\to H$, $S^{a*}$ exists and $S^{a*}=S$, then $S$ is called symmetric with respect to $a(\cdot,\cdot)$, or $a(\cdot,\cdot)$-symmetric.
\end{definition}

\begin{lemma}
If both $R^{a*}$ and $S^{a*}$ exist, then $(R\circ S)^{a*}$ exists, and $(R\circ S)^{a*}=S^{a*}\circ R^{a*}$.
\end{lemma}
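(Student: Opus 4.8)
The plan is to verify the defining property of the adjoint directly, producing $S^{a*}\circ R^{a*}$ as the explicit candidate for $(R\circ S)^{a*}$. Recall that, by the definition given above, proving that $(R\circ S)^{a*}$ exists amounts to showing that for every $u\in H$ there is a \emph{unique} $v\in H$ with $a(w,v)=a((R\circ S)w,u)$ for all $w\in H$, and that this $v$ equals $(S^{a*}\circ R^{a*})u$. I would treat existence and uniqueness separately.

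For existence, I would simply chain the two given adjoint relations in the correct order. Fixing $u\in H$ and an arbitrary test vector $w\in H$, I first apply the defining relation for $R^{a*}$ to the vector $Sw$, namely $a(R(Sw),u)=a(Sw,R^{a*}u)$; then I apply the defining relation for $S^{a*}$ with the vector $R^{a*}u$ in the second slot, namely $a(Sw,R^{a*}u)=a(w,S^{a*}(R^{a*}u))$. Reading $(R\circ S)w=R(Sw)$, these two identities combine to give $a((R\circ S)w,u)=a(w,(S^{a*}\circ R^{a*})u)$ for all $w\in H$. Hence $v:=(S^{a*}\circ R^{a*})u$ satisfies the required equation, so such a $v$ exists. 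Note that symmetry of $a$ is not actually needed here; only the two defining relations are used.

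The only genuine point is uniqueness, which is where I expect the main subtlety to lie. Uniqueness of $v$ is equivalent to the nondegeneracy statement that $a(w,z)=0$ for all $w\in H$ implies $z=0$. I would extract exactly this from the hypotheses: since $S^{a*}$ is assumed to exist, the vector $v$ in its own defining relation is unique for each argument; were there a nonzero $z$ with $a(w,z)=0$ for all $w$, one could add $z$ to any such $v$ and contradict that uniqueness. Thus the mere existence of $S^{a*}$ already forces $a$ to be nondegenerate in its second argument, and applying this to $z=v_1-v_2$ for two candidate solutions of the composite relation yields $v_1=v_2$. Combining existence and uniqueness shows that $(R\circ S)^{a*}$ exists and equals $S^{a*}\circ R^{a*}$.

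I would flag the nondegeneracy argument as the step to state with care, because it is the place where the word ``unique'' in the definition of the adjoint does the real work; everything else is a mechanical unwinding of the two adjoint identities, and the ordering $S^{a*}\circ R^{a*}$ (rather than $R^{a*}\circ S^{a*}$) is dictated automatically by the order in which $R$ and $S$ can be peeled off the composite $R\circ S$.
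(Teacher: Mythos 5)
Your proof is correct, and in fact it is more than the paper provides: the paper states this lemma without any proof at all, treating it as evident. Your existence argument is the natural one---peeling $R$ and then $S$ off the composite via the two defining relations, which forces the order $S^{a*}\circ R^{a*}$---and your uniqueness step is handled correctly: the uniqueness clause in the definition of $S^{a*}$ (or $R^{a*}$) already implies that $a(w,z)=0$ for all $w\in H$ forces $z=0$, and this nondegeneracy in the second argument is exactly what makes the candidate $v=(S^{a*}\circ R^{a*})u$ the \emph{unique} solution of the composite relation. Your observation that the symmetry of $a(\cdot,\cdot)$ is never used is also accurate; the lemma holds for any bounded bilinear form admitting the two adjoints. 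The only remark worth adding is that this care about nondegeneracy, while logically necessary under the paper's definition, is precisely the sort of detail the authors suppressed---so there is no discrepancy to reconcile, only a gap in the paper that your argument fills.
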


We propose the hypothesis below for an operator $S$.
\paragraph{\textbf{Hypothesis HC}} For any $u\in H$, $a(Su,u)=0$ if and only if $\|Su\|_H=0$.

\begin{lemma}
Let $S$ be $a(\cdot,\cdot)$-symmetric and satisfy {\bf HC}, then the eigenvalues of $S$ are all real.
\end{lemma}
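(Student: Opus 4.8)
The plan is to show that the eigenvalues of $S$ are real by exploiting the $a(\cdot,\cdot)$-symmetry together with Hypothesis \textbf{HC}. First I would set up the complexified bilinear form. Since $a(\cdot,\cdot)$ is a real symmetric bilinear form on $H$, I extend it to the complexification $H_{\mathbb{C}}=H\oplus iH$ as a sesquilinear form, writing $a(u+iv,w+iz)=a(u,w)+a(v,z)+i[a(v,w)-a(u,z)]$, so that $a(x,y)=\overline{a(y,x)}$ for $x,y\in H_{\mathbb{C}}$. The operator $S$ extends $\mathbb{C}$-linearly, and one checks directly from the defining identity $a(w,S^{a*}u)=a(Sw,u)$ that the extended $S$ remains symmetric with respect to the extended form: $a(Sx,y)=a(x,Sy)$ for all $x,y\in H_{\mathbb{C}}$.

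Next I would take an eigenvalue $\lambda\in\mathbb{C}$ with eigenvector $x\in H_{\mathbb{C}}$, $x\neq 0$, so $Sx=\lambda x$. Using symmetry on the pair $(x,x)$ gives
\begin{equation*}
\lambda\, a(x,x)=a(Sx,x)=a(x,Sx)=a(x,\lambda x)=\overline{\lambda}\,a(x,x),
\end{equation*}
where the last equality uses conjugate-linearity in the second slot. Writing $x=u+iv$ with $u,v\in H$, the quantity $a(Sx,x)$ expands into a real part equal to $a(Su,u)+a(Sv,v)$ and an imaginary part built from the cross terms. The relation above forces $(\lambda-\overline{\lambda})\,a(x,x)=0$, so the task reduces to ruling out the degenerate case in which $a(x,x)=0$ while $\lambda$ is genuinely non-real.

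The main obstacle, and where Hypothesis \textbf{HC} enters, is precisely this degeneracy: unlike the self-adjoint case, $a(\cdot,\cdot)$ need not be definite, so $a(x,x)=0$ does not immediately contradict $x\neq 0$. Here I would argue that if $\lambda\neq\overline{\lambda}$ we may still extract information from the real eigenvector components. The cleaner route is to work with $a(Sx,x)$ directly: from $Sx=\lambda x$ we get $a(Sx,x)=\lambda a(x,x)$, and taking the real eigenvalue equation back to $H$, the real and imaginary parts of $Sx=\lambda x$ couple $u$ and $v$. Testing $a(S\cdot,\cdot)$ against the real components and invoking \textbf{HC} — which states $a(Su,u)=0 \iff \|Su\|_H=0$ — lets me conclude that the only way the imaginary part of $\lambda$ can survive is if $Su=Sv=0$, i.e. $Sx=0$, contradicting $\lambda\neq 0$ on a nonzero eigenvector. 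Thus $\lambda=\overline{\lambda}$, so $\lambda$ is real. I expect the bookkeeping of separating real and imaginary parts of the sesquilinear identity to be the only genuinely delicate step; Hypothesis \textbf{HC} is the key device that replaces the positive-definiteness used in the classical self-adjoint argument.
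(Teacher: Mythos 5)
Your overall route is the same as the paper's: complexify $a(\cdot,\cdot)$ sesquilinearly, extend $S$ linearly, and use the $a$-symmetry to obtain $(\lambda-\bar{\lambda})\,a(x,x)=0$. The paper's proof is exactly this identity, written as $|\lambda-\bar{\lambda}||a(u,u)|=|a((S-\lambda I)u,u)-a(u,(S-\lambda I)u)|=0$. You actually go further than the paper in one respect: you explicitly flag the degenerate case $a(x,x)=0$, which is indeed the only place where \textbf{HC} can enter (the paper's ``namely $\lambda-\bar{\lambda}=0$'' silently assumes $a(x,x)\neq 0$ and never invokes \textbf{HC} at all).

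However, your resolution of that degenerate case has a genuine gap. Writing $x=u+iv$, what you know when $a(x,x)=0$ is only that $a(Sx,x)=a(Su,u)+a(Sv,v)=0$ (the cross terms cancel by symmetry). \textbf{HC} applies to $u$ and to $v$ \emph{separately}, but neither $a(Su,u)$ nor $a(Sv,v)$ need vanish: the two real numbers can cancel each other, so you cannot conclude $Su=Sv=0$. Worse, this cancellation is not an exotic sub-case but is forced: if $\lambda=\alpha+i\beta$ with $\beta\neq 0$, then $Su=\alpha u-\beta v$, $Sv=\beta u+\alpha v$, and testing the $a$-symmetry of $S$ on the pair $(u,v)$ gives $a(u,u)=-a(v,v)$; hence $a(x,x)=0$ holds automatically and $a(Su,u)=-a(Sv,v)$, generically nonzero. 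So the identity $(\lambda-\bar{\lambda})a(x,x)=0$ carries no information for a non-real eigenvalue, and applying \textbf{HC} pointwise to the real components concludes nothing --- the entire content of the lemma lives here. To close the argument you need one more idea: consider \emph{all} real vectors $w=su+tv$ in the invariant plane. Using $a(v,v)=-a(u,u)$ and $a(Su,v)=a(Sv,u)$, one computes $a(Sw,w)=(s^2-t^2)A+2stB$ with $A=a(Su,u)$, $B=a(Su,v)$, a quadratic form whose matrix $\left(\begin{smallmatrix} A & B \\ B & -A \end{smallmatrix}\right)$ has determinant $-(A^2+B^2)\leqslant 0$; it is therefore indefinite or identically zero, so it vanishes at some $w\neq 0$ ($u,v$ are linearly independent when $\beta\neq0$). \textbf{HC} then yields $Sw=0$, contradicting the invertibility of $S$ on ${\rm span}\{u,v\}$, where its complexified eigenvalues are $\lambda,\bar{\lambda}\neq 0$. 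Alternatively --- and this is evidently the paper's implicit reading --- one can postulate \textbf{HC} for the complexified form itself, in which case $a(x,x)=0$ gives $a(Sx,x)=\lambda a(x,x)=0$, hence $Sx=0$, contradicting $\lambda\neq 0$ in one line; but that is a strictly stronger hypothesis than \textbf{HC} on the real space, so it must be stated rather than inherited silently.
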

\begin{proof}
Let $H$, $S$, and $a(\cdot,\cdot)$ be complexified in the usual manner. Let $\lambda\neq 0$ be an eigenvalue of $S$, and $u$ be an eigenvector that belongs to $\lambda$. Then
$$
|\lambda-\bar{\lambda}||a(u,u)|=|a((S-\lambda I)u,u)-a((S-\bar{\lambda}I)u,u)|=|a((S-\lambda I)u,u)-a(u,(S-\lambda I)u)|=0.
$$
Namely $\lambda-\bar{\lambda}=0$. This finishes the proof.
\end{proof}

\begin{lemma}
Let $S$ be $a(\cdot,\cdot)$-symmetric and satisfy {\bf HC}. Let $\mu_1\neq\mu_2$ be two distinct eigenvalues of $S$. Then
\begin{equation}
a(u,v)=0,\ \ \forall\,u\in M(\mu_1),\ v\in M(\mu_2).
\end{equation}
\end{lemma}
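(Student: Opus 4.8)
The plan is to exploit the $a(\cdot,\cdot)$-symmetry of $S$ to transfer the action of $S$ from one argument of the bilinear form to the other, reducing the asserted orthogonality to an elementary scalar identity. Unwinding the definition of $S^{a*}$, the hypothesis $S^{a*}=S$ says precisely that
$$
a(Sw,x)=a(w,Sx),\qquad\forall\,w,x\in H.
$$
This is the single structural fact the proof rests on, and everything else is linear algebra.

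First I would fix $u\in M(\mu_1)$ and $v\in M(\mu_2)$, so that $Su=\mu_1u$ and $Sv=\mu_2v$. Applying the symmetry identity with $w=u$ and $x=v$, the left-hand side becomes $a(Su,v)=\mu_1\,a(u,v)$ by linearity of $a(\cdot,\cdot)$ in its first slot, while the right-hand side becomes $a(u,Sv)=\mu_2\,a(u,v)$ by linearity in the second slot. Equating the two expressions yields $(\mu_1-\mu_2)\,a(u,v)=0$, and since $\mu_1\neq\mu_2$ the factor $\mu_1-\mu_2$ is nonzero, forcing $a(u,v)=0$. As $u$ and $v$ were arbitrary in their respective eigenspaces, this gives the claim.

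The only point requiring care — and the place where the standing hypotheses enter — is that $\mu_1,\mu_2$ be genuine real eigenvalues, so that pulling $\mu_1$ and $\mu_2$ out of the bilinear form is unambiguous; this is exactly what the preceding lemma guarantees via {\bf HC}. Because $a(\cdot,\cdot)$ is bilinear rather than sesquilinear, no complex conjugation intervenes and the scalars factor out cleanly from either argument, so the computation mirrors the classical self-adjoint case verbatim. Accordingly I expect no real obstacle here: the substantive work (the well-posedness of $S^{a*}$ and the reality of the spectrum under {\bf HC}) has already been discharged in the earlier lemmas, and the present statement is the short orthogonality corollary that sits directly on top of them.
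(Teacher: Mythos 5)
Your proof is correct and follows essentially the same route as the paper: both arguments use the $a(\cdot,\cdot)$-symmetry of $S$ to move $S$ across the bilinear form, giving $\mu_1 a(u,v)=a(Su,v)=a(u,Sv)=\mu_2 a(u,v)$ and hence $a(u,v)=0$. Your version is marginally cleaner in that it subtracts rather than dividing by a (WLOG nonzero) eigenvalue as the paper does, but this is an immaterial difference.
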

\begin{proof}
Without loss of generality, assume $\mu_1\neq 0$, then
\begin{equation}
a(u,v)=\mu_1^{-1}a(\mu_1 u,v)=\mu_1^{-1}a(Su,v)=\mu_1^{-1}a(u,Sv)=(\mu_2/\mu_1) a(u,v).
\end{equation}
Since $\mu_1\neq\mu_2$, it follows that $a(u,v)=0$. This finishes the proof.
\end{proof}

\begin{lemma}\label{lem:samesign}
Let $S$ be $a(\cdot,\cdot)$-symmetric and satisfy {\bf HC}, then all $\mu a(u,u)$ take the same sign, where $u$ is an eigenvector of $S$ that belongs to $\mu$, a nonzero eigenvalue of $S$.
\end{lemma}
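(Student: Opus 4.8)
The plan is to combine Hypothesis \textbf{HC} with the intermediate value theorem along a judiciously chosen path. First I would record the pointwise nonvanishing. If $u$ is an eigenvector belonging to a nonzero eigenvalue $\mu$, then $Su=\mu u\neq 0$, so $\|Su\|_H\neq 0$; Hypothesis \textbf{HC} then forces $a(Su,u)\neq 0$, and since $a(Su,u)=\mu\,a(u,u)$ we learn that each quantity $\mu\,a(u,u)$ is a nonzero real number (recall that the eigenvalues are real by the earlier lemma, and since $S-\mu I$ is a real operator we may take the eigenvectors real, so $a(u,u)$ is real). Thus the only thing genuinely at stake is that these nonzero reals all share one sign.

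Next, given two eigenpairs $(\mu_1,u_1)$ and $(\mu_2,u_2)$, I would connect them by the segment $\gamma(t)=(1-t)u_1+tu_2$, $t\in[0,1]$, and study $p(t):=a(S\gamma(t),\gamma(t))$. Because $S$ acts on $u_1,u_2$ explicitly, $S\gamma(t)=(1-t)\mu_1 u_1+t\mu_2 u_2$, so expanding by bilinearity and symmetry shows $p$ is a genuine (quadratic) polynomial in $t$ with real coefficients; in particular it is continuous, and this avoids any boundedness assumption on $S$. The endpoints are exactly $p(0)=\mu_1 a(u_1,u_1)$ and $p(1)=\mu_2 a(u_2,u_2)$, the two quantities to be compared.

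The crux is to show $p(t)\neq 0$ on $[0,1]$, for then the intermediate value theorem gives $p$ a constant sign and hence makes $p(0)$ and $p(1)$ of the same sign. If $u_1,u_2$ are linearly independent, then $S\gamma(t)=(1-t)\mu_1 u_1+t\mu_2 u_2$ vanishes only when $(1-t)\mu_1=t\mu_2=0$, which is impossible for $t\in[0,1]$ since $\mu_1,\mu_2\neq 0$; Hypothesis \textbf{HC} then yields $p(t)=a(S\gamma(t),\gamma(t))\neq 0$ throughout. This is the main obstacle of the argument, and it is precisely where \textbf{HC} is indispensable: it upgrades the algebraic fact $S\gamma(t)\neq 0$ into the analytic fact $p(t)\neq 0$, which is what feeds the intermediate value theorem.

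Finally I would dispose of the degenerate case. If $u_1,u_2$ are linearly dependent, say $u_2=c u_1$ with $c\neq 0$, then $Su_2=c\mu_1 u_1=\mu_2 u_2$ forces $\mu_1=\mu_2$, and $\mu_2 a(u_2,u_2)=c^2\mu_1 a(u_1,u_1)$ manifestly has the same sign as $\mu_1 a(u_1,u_1)$. This case must be separated out, since the segment can then cross $\ker S$ (for instance at $t=1/2$ when $u_2=-u_1$), so the path argument is unavailable there. Together the two cases show that every $\mu\,a(u,u)$, over all eigenvectors $u$ belonging to nonzero eigenvalues $\mu$, shares one common sign.
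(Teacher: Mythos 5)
Your proof is correct, and it takes a genuinely different route from the paper's. The paper argues in two algebraic stages: within a single eigenspace $M(\mu)$ it Gram--Schmidt-orthogonalizes with respect to $a(\cdot,\cdot)$ and observes that, since \textbf{HC} forces $a(u,u)\neq 0$ for every nonzero $u\in M(\mu)$, the diagonalized form $\sum_i\alpha_i^2a(u_i,u_i)$ can avoid zero only if all diagonal entries share one sign; then, across two distinct eigenvalues $\mu\neq\nu$, it invokes the preceding orthogonality lemma to get $a(u,v)=0$, so that $a(S(\alpha u+\beta v),\alpha u+\beta v)=\mu\alpha^2a(u,u)+\nu\beta^2a(v,v)$, and \textbf{HC} together with linear independence of $u,v$ again forbids a zero, forcing equal signs. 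Your segment argument replaces both stages by a single topological one: $p(t)=a(S\gamma(t),\gamma(t))$ is a quadratic polynomial that cannot vanish on $[0,1]$ because $S\gamma(t)=(1-t)\mu_1u_1+t\mu_2u_2\neq 0$ when $u_1,u_2$ are independent, so the intermediate value theorem pins down one sign, and the collinear case is a one-line computation. The two mechanisms are the same at heart --- \textbf{HC} makes the form $a(S\cdot,\cdot)$ anisotropic, and an everywhere-nonzero continuous quadratic form over a connected family of admissible vectors must be of one sign --- but yours is more self-contained: it needs neither the orthogonality lemma nor Gram--Schmidt (whose applicability to the possibly indefinite form $a$ restricted to $M(\mu)$ itself rests on the very anisotropy that \textbf{HC} supplies, a point the paper passes over), and it treats the equal-eigenvalue and distinct-eigenvalue cases uniformly. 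What the paper's version buys in exchange is the labeled by-product \eqref{eq:equina}, the two-sided bound $c_s\|v\|_H^2\leqslant a(v,v)\leqslant c_b\|v\|_H^2$ on $M(\mu)$, which is reused later (e.g.\ in the proof of Lemma \ref{lem:estgsev}); your argument does not produce those constants directly, though they follow afterwards from the same-sign conclusion and the finite dimensionality of $M(\mu)$.
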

\begin{proof}
Let $\dim(M(\mu))=m$, $\mu\neq 0$, then by Gram-Schmidt process, there exist $m$ linearly independent eigenvectors $\{u_j\}$, such that $a(u_i,u_j)=0$, for $1\leqslant i\neq j\leqslant m$. Now given $u=\sum_i\alpha_iu_i$, $a(u,u)=\sum_{i=1}^m\alpha_i^2a(u_i,u_i)$. Since $a(u,u)\neq 0$, we have all $a(u_i,u_i)$ take the same sign, $i=1,\dots,m$. We can set $a(u,u)>0$. Then there are two constants $0<c_s<c_b$, such that
\begin{equation}\label{eq:equina}
c_s\|v\|_H^2\leqslant a(v,v)\leqslant c_b\|v\|_H^2,\ \ \forall\,v\in M(\mu).
\end{equation}

Now, without loss of generality, given $u,v$ two eigenvectors of $S$ belonging to $\mu$ and $\nu$ respectively, such that $\|Su\|_H\| S v\|_H\neq 0$ and $a(u,v)=0$. Then $a(S(\alpha u+\beta v),\alpha u+\beta v)=\mu\alpha^2a(u,u)+\nu\beta^2a(v,v)$. Thus by {\bf HC}, $\mu a(u,u)$ and $\nu a(v,v)$ take the same sign. The proof is completed.
\end{proof}

Lemmas \ref{lem:listasT} and \ref{lem:listasTdis} then follows from the theory of spectral approximation of compact operators.
\begin{lemma}\label{lem:listasT}
If $T$ is a compact operator and $a(\cdot,\cdot)$-symmetric, then all its eigenvalues are real. Further, if all $a(w,w)$, where $w$ is any eigenvector of $T$ that belongs to some nonzero eigenvalue, take the same sign, the eigenvalues of $T$ can be listed in a sequence as, counting multiplicities and up to the sign,
\begin{equation}
\mu_1\geqslant  \mu_2 \geqslant \mu_3 \geqslant  \mu_4 \geqslant  \dots \geqslant  0.
\end{equation}
\end{lemma}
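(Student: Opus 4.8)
The plan is to read Lemma~\ref{lem:listasT} as a specialisation, to compact operators, of the structural facts already proved for $a(\cdot,\cdot)$-symmetric operators, glued to the classical Riesz--Schauder spectral theory. That classical theory supplies everything about the \emph{shape} of the spectrum: the nonzero spectrum of a compact $T$ on $H$ is an at most countable set of eigenvalues, each of finite algebraic multiplicity, whose only possible accumulation point is $0$. What the $a$-symmetry must add is that these eigenvalues are \emph{real}, and that under the sign hypothesis they are of \emph{one} sign; only then can they be threaded onto a single monotone sequence decaying to $0$.

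First I would settle the realness assertion, which is nothing more than the earlier lemma on $a(\cdot,\cdot)$-symmetric operators satisfying Hypothesis HC, applied verbatim to $T$. For a nonzero eigenvalue $\mu$ with eigenvector $u$, complexification gives $(\mu-\bar\mu)\,a(u,u)=0$; since $\|Tu\|_H=|\mu|\,\|u\|_H\neq 0$, Hypothesis HC forbids $a(Tu,u)=\mu\,a(u,u)=0$ and hence $a(u,u)=0$, forcing $\mu=\bar\mu$. The eigenvalue $0$ is real trivially, and no compactness is needed at this step.

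Next I would extract the sign. Lemma~\ref{lem:samesign} already tells us that the product $\mu\,a(u,u)$ keeps a fixed sign over all eigenpairs $(\mu,u)$ with $\mu\neq 0$. The extra hypothesis of the present lemma is that $a(w,w)$ itself keeps a fixed, and hence nonzero, sign over the same eigenvectors. Dividing one constant-sign quantity by the other shows that $\mu$ keeps a fixed sign across all nonzero eigenvalues. The clause ``up to the sign'' then lets me replace $T$ by $-T$ if that common sign is negative, so that without loss of generality every eigenvalue is nonnegative.

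It only remains to enumerate. By Riesz--Schauder the nonzero eigenvalues, now known to be real, nonnegative, and of finite multiplicity with sole accumulation point $0$, can be listed in nonincreasing order and counted with multiplicity as $\mu_1\geqslant\mu_2\geqslant\cdots\geqslant 0$, the sequence tending to $0$ in infinite dimensions and padded by zeros otherwise. I expect the one delicate point to be this sign bookkeeping rather than any analytic estimate: one must verify that the hypothesis on $a(w,w)$ combined with Lemma~\ref{lem:samesign} genuinely pins down the sign of $\mu$ and not merely that of the product $\mu\,a(u,u)$, and that HC, equivalently the nonvanishing of $a(u,u)$ guaranteed by the sign hypothesis, is in force so that the division is legitimate.
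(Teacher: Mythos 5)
Your route is exactly the one the paper intends: the paper offers no written proof of this lemma, stating only that it ``follows from'' the preceding lemmas on $a(\cdot,\cdot)$-symmetric operators together with classical compact-operator theory, and your three steps (realness via the HC-based lemma, a common sign for the nonzero eigenvalues via Lemma~\ref{lem:samesign} plus the sign hypothesis, then Riesz--Schauder enumeration) are the natural expansion of that one-line deduction.

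There is, however, one genuine flaw in your closing remark, and it is not merely bookkeeping: you assert that HC is ``equivalently the nonvanishing of $a(u,u)$ guaranteed by the sign hypothesis,'' and that this licenses the appeal to Lemma~\ref{lem:samesign}. That equivalence is false. HC is a condition on \emph{all} $u\in H$, and the proof of Lemma~\ref{lem:samesign} applies HC to the combinations $\alpha u+\beta v$ of eigenvectors belonging to \emph{different} eigenvalues --- vectors which are not themselves eigenvectors --- so nonvanishing of $a$ on eigenvectors alone does not suffice. Concretely, take $a(\cdot,\cdot)$ to be the ordinary inner product and $T$ the compact self-adjoint operator acting as $\mathrm{diag}(1,-1)$ on a two-dimensional subspace and as zero elsewhere: $T$ is $a$-symmetric, every eigenvector $w$ satisfies $a(w,w)>0$, yet the nonzero eigenvalues are $+1$ and $-1$, so no sign flip makes them all nonnegative; similarly, realness itself fails without HC, e.g.\ $T=\left(\begin{smallmatrix}0&1\\-1&0\end{smallmatrix}\right)$ with $a(u,v)=u_1v_2+u_2v_1$ is $a$-symmetric with eigenvalues $\pm i$. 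The correct resolution is that HC must be carried as a standing hypothesis of the lemma (as the paper tacitly does, and as is guaranteed in its application, where {\bf HIS} and $b(u,u)\geqslant 0$ yield HC); it cannot be recovered from the same-sign condition on eigenvectors. With HC restored to the hypotheses, your argument is complete and correct.
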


\begin{lemma}\label{lem:listasTdis}
Let $T$ be a compact operator which is $a(\cdot,\cdot)$-symmetric, and $\{T_h\}_{h>0}$ be a family of compact operators which are $a(\cdot,\cdot)$-symmetric. For each $T_h$, if all $a(w_h,w_h)$, where $w_h$ is any eigenvector of $T_h$ that belongs to some nonzero eigenvalue, take the same sign, its eigenvalues are listed in a sequence as, counting multiplicities and up to the sign,
\begin{equation}
\mu_{1,h}\geqslant  \mu_{2,h} \geqslant \mu_{3,h} \geqslant \mu_{4,h} \geqslant  \dots\geqslant 0 .
\end{equation}
Assume that $T_h$ converges to $T$ in norm as $h\to 0$. Then
\begin{equation}
\lim_{h\to 0}\mu_{k,h} = \mu_k,\ \ k=1,2,\dots.
\end{equation}
\end{lemma}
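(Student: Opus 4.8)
The plan is to deduce the convergence of the \emph{ordered} eigenvalues from the localized spectral convergence already recorded in the Preliminaries, by a counting argument. Since by hypothesis both $T$ and every $T_h$ have real eigenvalues listable as decreasing nonnegative sequences (after fixing the common sign, which I may take positive, as in Lemmas \ref{lem:listasT} and \ref{lem:samesign}), it suffices to show that for each fixed $k$ and each small $\delta>0$ one has $|\mu_{k,h}-\mu_k|<\delta$ once $h$ is small enough. I would reduce the whole statement to this pointwise claim.

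First I would isolate the two facts that drive everything. The first is \emph{absence of spurious spectrum}: if $\mathcal K$ is a compact subset of the resolvent set of $T$, then writing $z-T_h=(z-T)[I-(z-T)^{-1}(T_h-T)]$ and using $\sup_{z\in\mathcal K}\|(z-T)^{-1}\|_H<\infty$ together with $\|T-T_h\|_H\to 0$, the bracket is invertible for $h$ small, so $(z-T_h)^{-1}$ exists and is uniformly bounded on $\mathcal K$; hence $T_h$ has no eigenvalue in $\mathcal K$. The second is \emph{exact multiplicity matching}: for each distinct positive eigenvalue $\lambda$ of $T$ of multiplicity $m_\lambda$, enclosing $\lambda$ by a circle $\Gamma_\lambda$ meeting no other point of $\sigma(T)$, the spectral-projection argument of the Preliminaries ($\mathrm{rank}\,E_h=\mathrm{rank}\,E$ once $\|E-E_h\|_H<1$, with $E_h\to E$ in norm) shows $T_h$ has exactly $m_\lambda$ eigenvalues, with multiplicity, inside $\Gamma_\lambda$, all tending to $\lambda$.

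Next I would carry out the bookkeeping. Fix $k$ and suppose first $\mu_k>0$. Let $\lambda_1>\dots>\lambda_p=\mu_k$ be the distinct eigenvalues of $T$ that are $\geqslant\mu_k$, with multiplicities $m_1,\dots,m_p$ summing to $K:=\sum_j m_j$; in the ordered list the block with value $\mu_k$ occupies positions $K-m_p+1,\dots,K$, so $k$ lies in this block and $K\geqslant k$. Choose $\delta>0$ so small that the closed disks $\overline{D_j}=\{|z-\lambda_j|\leqslant\delta\}$ are pairwise disjoint, lie in $\{\mathrm{Re}\,z>0\}$, and the only points of $\sigma(T)$ with $|z|\geqslant\mu_k-\delta$ are $\lambda_1,\dots,\lambda_p$. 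Applying the first fact to $\mathcal K=\{|z|\geqslant\mu_k-\delta\}\setminus\bigcup_j D_j$ (where $T$ has no spectrum) forces, for $h$ small, every eigenvalue of $T_h$ of modulus $\geqslant\mu_k-\delta$ to lie in some $D_j$, while the second fact gives exactly $m_j$ of them in each $D_j$, hence exactly $K$ in total; these are therefore precisely the $K$ largest eigenvalues $\mu_{1,h},\dots,\mu_{K,h}$, and disjointness of the disks (real eigenvalues, gaps $>2\delta$) makes value order coincide with disk order, so the $\lambda_p$-block positions fall in $D_p$. Since $k$ lies in that block, $\mu_{k,h}\in(\mu_k-\delta,\mu_k+\delta)$, as required. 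If instead $\mu_k=0$, then $T$ has only $n<k$ nonzero eigenvalues; applying the same two facts on $\{|z|\geqslant\delta\}$ (with $\delta$ below the least positive eigenvalue) shows $T_h$ has exactly $n$ eigenvalues of modulus $\geqslant\delta$, whence $\mu_{k,h}\in[0,\delta)$ and $\mu_{k,h}\to 0$.

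I expect the main obstacle to be the bookkeeping around multiple and clustered eigenvalues: one must verify that exactly the right number of $T_h$-eigenvalues accumulate at each $\mu_k$ and that none escape elsewhere, so that the \emph{ordered} sequences align even when $k$ falls strictly inside a multiplicity block. This is precisely where the exact multiplicity matching (a lower count near each $\lambda_j$) and the absence of spurious spectrum (an upper count, ruling out stray large eigenvalues of $T_h$) must be combined, and where the real, equal-sign structure from Lemmas \ref{lem:listasT} and \ref{lem:samesign} is essential, since it makes modulus ordering agree with value ordering and makes the two counts add up. A minor point to dispatch along the way is sign consistency: once $T$ has a nonzero eigenvalue, the eigenvalues of $T_h$ near it are positive for $h$ small, so the common sign of $T_h$ matches that of $T$.
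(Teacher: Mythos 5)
Your proposal is correct, and it is essentially the argument the paper itself relies on: the paper gives no written proof of Lemma \ref{lem:listasTdis}, saying only that it ``follows from the theory of spectral approximation of compact operators,'' and your two facts --- absence of spurious spectrum via the Neumann-series resolvent bound, and exact multiplicity matching via norm convergence of the spectral projections $E_h \to E$ --- together with the ordering/counting bookkeeping are precisely the content of that standard theory, assembled from the same ingredients the Preliminaries set up ($\Gamma_\mu$, $E$, $E_h$, and the $m$ discrete eigenvalues inside $\Gamma_\mu$). One small repair is needed: your set $\mathcal{K}=\{|z|\geqslant \mu_k-\delta\}\setminus\bigcup_j D_j$ is closed but unbounded, so it is not a compact subset of the resolvent set as your Fact 1 requires. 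Either intersect it with the disk $\{|z|\leqslant \sup_h\|T_h\|_H+1\}$ --- legitimate because $\|T_h\|_H$ is uniformly bounded by norm convergence, so no eigenvalue of any $T_h$ lies outside that disk --- or observe that $\|(z-T)^{-1}\|_H\leqslant (|z|-\|T\|_H)^{-1}$ decays as $|z|\to\infty$, so the resolvent bound and hence the Neumann-series invertibility remain uniform on the unbounded set. With that adjustment the rest is airtight: exactly $m_j$ eigenvalues of $T_h$ in each $D_j$, none elsewhere of modulus $\geqslant \mu_k-\delta$, realness and the common sign (which, as you note, must agree with that of $T$ for small $h$) making modulus order coincide with value order, and the separate treatment of the case $\mu_k=0$.
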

\begin{remark}
The assumption that all $a(w,w)$ take the same sign where $w$ is any eigenvector of $T$ that belongs to some nonzero eigenvalue is a mild one for elliptic problems, and, according to Brezzi's theory, many types of saddle-point problems. 
\end{remark}

\subsubsection{Spectral approximation by the aid of projection operator}
\begin{lemma}
Let $P_G$ be a projection on $G\subset H$.  If both $T$ and $P_G$ are $a(\cdot,\cdot)$-symmetric on $H$, then $T_G=P_GT$ is $a(\cdot,\cdot)$-symmetric on $G$.
\end{lemma}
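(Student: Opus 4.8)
The plan is to verify the defining relation of $a(\cdot,\cdot)$-symmetry directly on $G$: I would show that for all $u,w\in G$ one has $a(w,T_Gu)=a(T_Gw,u)$, and that the candidate $v=T_Gu$ is the (unique) element of $G$ realizing the adjoint of $T_G$ restricted to $G$. The two structural facts to exploit are: (i) since $P_G$ is $a(\cdot,\cdot)$-symmetric, $a(w,P_Gx)=a(P_Gw,x)$ for all $w,x\in H$; and (ii) $P_G$ acts as the identity on $G$, so $P_Gu=u$ and $P_Gw=w$ whenever $u,w\in G$. With these, the computation is the chain
\begin{align*}
a(w,T_Gu)&=a(w,P_GTu)=a(P_Gw,Tu)=a(w,Tu)\\
&=a(Tw,u)=a(Tw,P_Gu)=a(P_GTw,u)=a(T_Gw,u),
\end{align*}
where the second and seventh equalities use the $a(\cdot,\cdot)$-symmetry of $P_G$, the third and fifth use $P_G|_G=\mathrm{id}$, and the fourth uses the $a(\cdot,\cdot)$-symmetry of $T$. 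Since $T_Gu=P_GTu\in G$, this already exhibits $v=T_Gu\in G$ as an element satisfying $a(w,v)=a(T_Gw,u)$ for every $w\in G$, which is the existence half of the adjoint definition applied to $T_G|_G$.

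Next I would address uniqueness, so that the adjoint of $T_G|_G$ is genuinely well-defined as an operator on $G$ in the sense of the paper's definition. Because $T^{a*}$ and $P_G^{a*}$ are assumed to exist, $a(\cdot,\cdot)$ is non-degenerate on $H$ (uniqueness of the representing vector forces $a(x,z)=0\ \forall x\in H\Rightarrow z=0$). I would then transfer this to $G$: if $z\in G$ satisfies $a(w,z)=0$ for all $w\in G$, then for arbitrary $x\in H$ we have $a(x,z)=a(x,P_Gz)=a(P_Gx,z)=0$, since $P_Gz=z$, $P_G$ is $a(\cdot,\cdot)$-symmetric, and $P_Gx\in G$; hence $z=0$ by non-degeneracy on $H$. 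Thus the $v$ produced above is the unique element of $G$ with the required property, so $(T_G|_G)^{a*}$ exists and equals $T_G|_G$, which is precisely $a(\cdot,\cdot)$-symmetry of $T_G$ on $G$.

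I do not anticipate a deep obstacle, but the step requiring genuine care — and the reason the composition rule for adjoints stated above cannot simply be quoted — is that it gives $(P_GT)^{a*}=T^{a*}\circ P_G^{a*}=T\circ P_G$, which shows $P_GT$ is \emph{not} $a(\cdot,\cdot)$-symmetric on all of $H$, its adjoint $TP_G$ being different from $P_GT$ in general. Symmetry is recovered only after restricting to $G$ and invoking $P_G|_G=\mathrm{id}$, which is exactly what collapses $TP_G$ and $P_GT$ to operators agreeing in the bilinear form on $G\times G$. The secondary subtlety is the non-degeneracy of $a(\cdot,\cdot)$ on the subspace $G$: this does not follow from non-degeneracy on $H$ for an arbitrary subspace, and it is here that the hypothesis that $P_G$ itself is $a(\cdot,\cdot)$-symmetric does the essential work.
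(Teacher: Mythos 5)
Your proof is correct and follows essentially the same route as the paper's: a direct chain of equalities $a(w,T_Gu)=\dots=a(T_Gw,u)$ using the $a(\cdot,\cdot)$-symmetry of $P_G$ and $T$ together with $P_G|_G=\mathrm{id}$, which is exactly the paper's one-line computation. Your additional verification that the adjoint of $T_G$ on $G$ is well defined (non-degeneracy of $a$ on $G$ via $P_G$-symmetry) is a point the paper leaves implicit, but it is a refinement of the same argument rather than a different approach.
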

\begin{proof}
Given $u,v\in G$, $$a(T_Gu,v)=a(P_GTu,v)=a(u,TP_Gv)=a(P_Gu,Tv)=a(u,P_GTv)=a(u,T_Gv).$$ This completes the proof.
\end{proof}

Let $\mathcal{G}:=\{G_h\}_{h>0}$ be a family of subspaces of $H$, and $P_h$ be the projection operators on $G_h$. Assume that
\begin{equation}\label{eq:aaoa}
\displaystyle\inf_{G_h\in\mathcal{G}}\inf_{u\in H\setminus G_h,v\in G_h}\frac{\|u-v\|_H}{\|u-P_hu\|_H}\geqslant C_0.
\end{equation}
\begin{lemma}\label{lem:estgsev}
Let $T$ and $P_h$ be $a(\cdot,\cdot)$-symmetric, and $T_h=P_hT$ converges to $T$ in norm. Let $\mu$ be a nonzero eigenvalue of $T$ with algebraic multiplicity $m$ and let $\mu_h$ be an eigenvalue of $T_h$ that converge to $\mu$. There is a constant $C$, such that for $h$ sufficiently small,
$|\mu-\mu_h|\leqslant C\hat{\delta}(M(\mu),M_h(\mu))^2$.
\end{lemma}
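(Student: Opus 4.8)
The plan is to reduce the eigenvalue error to an exact Rayleigh-quotient identity whose right-hand side is manifestly quadratic in the eigenvector error, and then to invoke the gap estimate. Let $(\mu_h,u_h)$ be an eigenpair of $T_h=P_hT$ with $\mu_h\to\mu$ and $\|u_h\|_H=1$; since $\mu_h\neq 0$ for small $h$ and $\mathrm{range}(T_h)\subseteq G_h$, we have $u_h\in G_h$, i.e. $P_hu_h=u_h$. First I would establish, for \emph{any} $u\in M(\mu)$, the identity
\begin{equation}
a(T(u_h-u),u_h-u)-\mu\,a(u_h-u,u_h-u)=a(Tu_h,u_h)-\mu\,a(u_h,u_h).
\end{equation}
This follows purely from the symmetry of $a(\cdot,\cdot)$, the $a(\cdot,\cdot)$-symmetry of $T$, and $Tu=\mu u$: expanding the left-hand side, the term $a(Tu,u)=\mu\,a(u,u)$ cancels the term $-\mu\,a(u,u)$, while both cross terms reduce to $\mu\,a(u,u_h)$, because $a(Tu,u_h)=\mu\,a(u,u_h)$ and $a(Tu_h,u)=a(u_h,Tu)=\mu\,a(u,u_h)$.

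The decisive step is then the cancellation that turns the right-hand side into $(\mu_h-\mu)\,a(u_h,u_h)$. Since $(T-T_h)u_h=(I-P_h)Tu_h$, and since $P_h$ is $a(\cdot,\cdot)$-symmetric with $P_hu_h=u_h$, one has $a(P_hTu_h,u_h)=a(Tu_h,P_hu_h)=a(Tu_h,u_h)$, whence
\begin{equation}
a\big((I-P_h)Tu_h,u_h\big)=a(Tu_h,u_h)-a(P_hTu_h,u_h)=0,
\end{equation}
so that $a(Tu_h,u_h)=a(T_hu_h,u_h)=\mu_h\,a(u_h,u_h)$. Combining this with the first display yields the master relation
\begin{equation}
\mu_h-\mu=\frac{a(T(u_h-u),u_h-u)-\mu\,a(u_h-u,u_h-u)}{a(u_h,u_h)}.
\end{equation}
This cancellation, which plays the role of Galerkin orthogonality in the present non-self-adjoint context, is the source of the double-order phenomenon, and it is exactly where the $a(\cdot,\cdot)$-symmetry of the projection $P_h$ is indispensable.

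It remains to estimate the two factors. Choosing $u\in M(\mu)$ to realize $\mathrm{dist}(u_h,M(\mu))$ (attained since $\dim M(\mu)=m<\infty$) gives $\|u_h-u\|_H\leqslant\delta(M_h(\mu),M(\mu))\leqslant\hat{\delta}(M(\mu),M_h(\mu))$, so boundedness of $a(\cdot,\cdot)$ and of the compact operator $T$ bound the numerator by $C\,\hat{\delta}(M(\mu),M_h(\mu))^2$. For the denominator I would use Lemma \ref{lem:samesign}, which supplies $0<c_s\leqslant|a(v,v)|/\|v\|_H^2\leqslant c_b$ on $M(\mu)$; combined with $\|u\|_H\geqslant 1-\hat{\delta}\geqslant\tfrac12$ and $|a(u_h,u_h)-a(u,u)|\leqslant C\|u_h-u\|_H\to 0$, this yields $|a(u_h,u_h)|\geqslant c_s/8>0$ for $h$ small, and dividing gives $|\mu-\mu_h|\leqslant C\,\hat{\delta}(M(\mu),M_h(\mu))^2$. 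The one genuinely delicate point is this denominator bound: because $a(\cdot,\cdot)$ is only guaranteed sign-definite on the eigenspace $M(\mu)$, not on all of $H$, the lower bound $|a(u_h,u_h)|\geqslant c>0$ cannot come from coercivity and must instead be extracted from the $H$-convergence of $u_h$ to $M(\mu)$; this is the step that relies on $\hat{\delta}\to 0$, which itself follows from the norm convergence $T_h\to T$ together with Lemma \ref{lem:hbthm7.1}.
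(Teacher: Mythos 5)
Your proof is correct and takes essentially the same route as the paper's: the same $a(\cdot,\cdot)$-symmetry cancellation $a(Tu_h,u_h)=a(T_hu_h,u_h)=\mu_h a(u_h,u_h)$, the same quadratic identity $(\mu_h-\mu)\,a(u_h,u_h)=a(T(u_h-u),u_h-u)-\mu\,a(u_h-u,u_h-u)$, and the same endgame of bounding the numerator by $\hat{\delta}(M(\mu),M_h(\mu))^2$ and the denominator away from zero. The only cosmetic differences are that you choose $u$ as the best approximation to $u_h$ from $M(\mu)$ (which bounds $\|u_h-u\|_H$ by the gap directly), whereas the paper writes $u_h=\gamma E_h u$ and invokes Lemma \ref{lem:proj} to control $\|u-E_hu\|_H$ and $|\gamma-1|$, and that you obtain the lower bound on $|a(u_h,u_h)|$ by continuity of $a(\cdot,\cdot)$ together with \eqref{eq:equina} on $M(\mu)$, while the paper transfers \eqref{eq:equina} to $M_h(\mu)$.
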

\begin{proof}
Firstly, let $v_h\in G_h$, then $$a(T_hv_h,v_h)=a(P_hTv_h,v_h)=a(Tv_h,P_h^{a*}v_h)=a(v_h,T^{a*}P_h^{a*}v_h)=a(v_h,TP_hv_h)=a(Tv_h,v_h).$$

Let $u_h\in M_h(\mu)$, with $\|u_h\|_H=1$. There is $u\in M(\mu)$, $\|u\|_H=1$, such that $u_h= \gamma E_hu$. Note that $a(Tu,v)=\mu a(u,v)$, and $a(T_hu_h,v_h)=\mu_ha(u_h,v_h)$, for $v\in H$ and $v_h\in G_h$. Then
\begin{eqnarray*}
&|\mu-\mu_h|a(u_h,u_h)=|a(T(u-u_h),(u-u_h))-\mu a((u-u_h),(u-u_h))|
\\
&\leqslant C\|u-u_h\|_H^2 \leqslant C(\|u-E_hu\|_H^2+|\gamma-1|^2\|E_hu\|^2).
\end{eqnarray*}
By Lemma \ref{lem:proj} and \eqref{eq:aaoa}, we can prove $\|u-E_hu\|_H\leqslant \hat{\delta}(M(\mu),M_h(\mu))$ and $|\gamma-1|\leqslant \hat{\delta}(M(\mu),M_h(\mu))$. The proof is then completed by noting that, by \eqref{eq:aaoa}, Lemma \ref{lem:proj} and  \eqref{eq:equina}, we have $c_{sh}\|v_h\|_H^2\leqslant a(v_h,v_h)\leqslant c_{bh}\|v_h\|_H^2$ for $v_h\in M_h(\mu)$.
\end{proof}

\subsection{Variational formulation}

Let $H$ be a Hilbert space, and $a(\cdot,\cdot)$ and $b(\cdot,\cdot)$ be two bounded symmetric bilinear forms on $H$. Besides, $b(u,u)\geqslant 0$ for $u\in H$. Let an operator $T:H\to H$ be defined by
$$
a(Tw,v)=b(w,v),\ \ \forall\,v\in H.
$$

\paragraph{\textbf{Hypothesis\ HIS}} $\displaystyle \inf_{v\in H}\sup_{w\in H}\frac{a(v,w)}{\|v\|_H\|w\|_H}\geqslant C$.

\begin{lemma}
If $a(\cdot,\cdot)$ satisfies {\bf HIS}, then,
\begin{enumerate}
\item $T$ is uniquely defined, and, $\|T\|_H\cequiv\|T^{a*}\|_H$\footnote{From this point onwards, $\lesssim$, $\gtrsim$, and $\cequiv$ respectively denote $\leqslant$, $\geqslant$, and $=$ up to a constant. The hidden constants depend on the domain, and, when triangulation is involved, they also depend on the shape-regularity of the triangulation, but they do not depend on $h$ or any other mesh parameter.};
\item $T$ is $a(\cdot,\cdot)$-symmetric, and {\bf HC} holds.
\end{enumerate}
\end{lemma}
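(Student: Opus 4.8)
The plan is to treat the four assertions in the order: well-posedness of $T$, existence of $T^{a*}$ together with the norm equivalence, $a(\cdot,\cdot)$-symmetry, and finally \textbf{HC}.

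First I would observe that the symmetry of $a(\cdot,\cdot)$ upgrades the single one-sided condition \textbf{HIS} into the full pair of inf--sup/nondegeneracy conditions needed for well-posedness: since $a(v,w)=a(w,v)$, the quantity $\inf_v\sup_w a(v,w)/(\|v\|_H\|w\|_H)$ is symmetric in the roles of its two arguments, so \textbf{HIS} simultaneously supplies the trial inf--sup estimate and the nondegeneracy of the test space. For fixed $w$, the map $v\mapsto b(w,v)$ is a bounded linear functional because $b(\cdot,\cdot)$ is bounded; the standard inf--sup (Banach--Ne\v{c}as--Babu\v{s}ka) theory then produces a unique $Tw\in H$ with $a(Tw,v)=b(w,v)$ for all $v\in H$, together with the bound $\|Tw\|_H\lesssim\|w\|_H$, so $T$ is well-defined and bounded. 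The same argument applied to the bounded functional $w\mapsto a(Sw,u)$ shows that $S^{a*}$ exists for any bounded $S$, in particular for $T$.

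Next, for the norm equivalence I would argue for a general bounded $S$ admitting $S^{a*}$. Testing the defining identity $a(w,S^{a*}u)=a(Sw,u)$ against \textbf{HIS} gives
$$\|S^{a*}u\|_H\lesssim\sup_{w\neq0}\frac{a(w,S^{a*}u)}{\|w\|_H}=\sup_{w\neq0}\frac{a(Sw,u)}{\|w\|_H}\lesssim\|S\|_H\|u\|_H,$$
hence $\|S^{a*}\|_H\lesssim\|S\|_H$. A short computation using only the symmetry of $a(\cdot,\cdot)$ and uniqueness from \textbf{HIS} shows $(S^{a*})^{a*}=S$, so the reverse bound follows by applying the same estimate to $S^{a*}$; together these yield $\|T\|_H\cequiv\|T^{a*}\|_H$.

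For $a(\cdot,\cdot)$-symmetry I would simply compute, for arbitrary $w,u\in H$, that $a(Tw,u)=b(w,u)=b(u,w)=a(Tu,w)=a(w,Tu)$, using the symmetry of both $b(\cdot,\cdot)$ and $a(\cdot,\cdot)$; uniqueness of the adjoint (again via \textbf{HIS}) then forces $T^{a*}=T$. Finally, for \textbf{HC}, note that $a(Tu,u)=b(u,u)$, so $a(Tu,u)=0$ is exactly $b(u,u)=0$. The step I expect to carry the real content is that $b(\cdot,\cdot)$ is symmetric and positive semidefinite, hence obeys the Cauchy--Schwarz inequality $|b(u,v)|^2\leqslant b(u,u)\,b(v,v)$; thus $b(u,u)=0$ forces $b(u,v)=0$ for every $v$, whence $a(Tu,v)=0$ for all $v$ and \textbf{HIS} yields $\|Tu\|_H=0$. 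The converse is immediate, since $Tu=0$ gives $a(Tu,u)=0$. This settles all four claims; the only genuinely non-mechanical ingredient is the Cauchy--Schwarz argument for the semidefinite form $b(\cdot,\cdot)$ in \textbf{HC}, everything else being the symmetric inf--sup calculus.
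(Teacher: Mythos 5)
Your proof is correct and follows essentially the same route as the paper: Babu\v{s}ka (inf--sup) theory for the existence of $T$ and $T^{a*}$, the duality identity $a(Tv,w)=a(v,T^{a*}w)$ for the norm equivalence, the symmetry of $a(\cdot,\cdot)$ and $b(\cdot,\cdot)$ for the $a(\cdot,\cdot)$-symmetry, and the positive semidefiniteness of $b(\cdot,\cdot)$ for \textbf{HC}. The only cosmetic differences are that the paper gets the norm equivalence from a symmetric double-supremum rather than your one-sided bound plus the involution $(S^{a*})^{a*}=S$, and for \textbf{HC} it routes through the Riesz operator $B$ with $(Bv,w)_H=b(v,w)$ instead of applying Cauchy--Schwarz to the semidefinite form directly --- the same argument in operator guise.
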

\begin{proof}
The existence of $T^{a*}$ follows from the Babu\v{s}ka theory. Moreover, we have  $\displaystyle\|u\|_H\cequiv \sup_{v\in H}\frac{a(u,v)}{\|v\|}.$ Therefore,
$\displaystyle\|T\|_H\cequiv\sup_{v\in H}\sup_{w\in H}\frac{a(Tv,w)}{\|w\|_H\|v\|_H}=\sup_{v\in H}\sup_{w\in H}\frac{a(v,T^{a*}w)}{\|w\|_H\|v\|_H}\cequiv\|T^{a*}\|_H$.

The $a(\cdot,\cdot)$-symmetry follows from the definition. Define $B:H\to H$ by $(Bv,w)_H=b(v,w)$, where $(\cdot,\cdot)_H$ is the basic inner product equipped onto $H$. Then $B$ is uniquely defined, and $B$ is self-adjoint. Note that $b(u,u)\geqslant 0$, and we have $B$ positive semi-definite. Particularly, it is easy to show that $(Bv,v)_H=0$ if and only if $Bv=0$. Namely, $b(u,u)=0$ if and only if $b(u,v)=0$ for any $v\in H$. Further, $a(Tu,u)=0$ if and only if $a(Tu,v)=0$ for any $v\in H$, which by {\bf HIS} is equivalent to $Tu=0$. Thus {\bf HC} holds. The proof is completed.
\end{proof}
\begin{remark}
In general, $T$ can not be symmetric with respect to the intrinsic inner product of $H$.
\end{remark}

%
%
%

Let $\mathcal{G}:=\{G_i\}_{i=0,1,\dots}$ be such that
$$
G_0\subset G_1\subset \dots\subset H.
$$
Define operators $P_i:H\to G_i$ and $T_i:H\to G_i$ by
$$
a(P_iw,v)=a(w,v),\ \ w\in H,\forall\,v\in H,\quad a(T_iw,v)=b(w,v),\ \ \forall\,v\in G_i.
$$

\paragraph{\textbf{Hypothesis HISG}} $\displaystyle\inf_{G\in \mathcal{G}}\inf_{v\in G}\sup_{w\in G}\frac{a(v,w)}{\|v\|_H\|w\|_H}\geqslant C'$.\\
The lemma below is standard.
\begin{lemma}\label{lem:oa}
If $a(\cdot,\cdot)$ and $\mathcal{G}$ satisfy {\bf HIS} and {\bf HISG}, the two operators $P_i$ and $T_i$ are well defined. Evidently, $T_i=P_iT$. Besides,
\begin{equation}
\|(I-P_G)w\|_H\leqslant (1+\frac{1}{C}+\frac{1}{C'})\inf_{v\in G}\|w-v\|_H,\ \ \forall\,w\in H.
\end{equation}
\end{lemma}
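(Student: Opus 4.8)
The plan is to verify the three assertions in order: the well-posedness of $P_i$ and $T_i$, the identity $T_i=P_iT$, and then the quasi-optimality bound, which is the classical C\'ea--Babu\v{s}ka estimate transcribed to the present inf-sup setting. All three are routine consequences of \textbf{HIS}, \textbf{HISG}, and the symmetry of $a(\cdot,\cdot)$.

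\emph{Well-posedness and the identity.} Both $P_iw$ and $T_iw$ solve square variational problems posed on $G_i$, governed by the same form $a(\cdot,\cdot)$ and with data $a(w,\cdot)$ and $b(w,\cdot)$ respectively. By \textbf{HISG} the form $a(\cdot,\cdot)$ is inf-sup stable on $G_i$ with constant $C'$; since $a(\cdot,\cdot)$ is symmetric, the transposed inf-sup condition holds with the same constant, so the Babu\v{s}ka theory makes both problems uniquely solvable, with the solution norms bounded by $C'^{-1}$ times the respective data norms. This defines $P_i$ and $T_i$. For the identity, let $T$ be the operator supplied by \textbf{HIS}; then $P_iTw\in G_i$ satisfies $a(P_iTw,v)=a(Tw,v)=b(w,v)$ for all $v\in G_i$, which is exactly the equation defining $T_iw$. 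Uniqueness on $G_i$ gives $T_iw=P_iTw$ for every $w\in H$.

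\emph{Quasi-optimality.} Fix $G=G_i$ and write $P_G=P_i$. The definition yields Galerkin orthogonality, $a((I-P_G)w,z)=0$ for all $z\in G$. For arbitrary $v\in G$, apply \textbf{HISG} to $P_Gw-v\in G$ and insert this orthogonality:
\begin{equation*}
C'\,\|P_Gw-v\|_H\leqslant\sup_{z\in G}\frac{a(P_Gw-v,z)}{\|z\|_H}=\sup_{z\in G}\frac{a(w-v,z)}{\|z\|_H}\leqslant\sup_{z\in H}\frac{a(w-v,z)}{\|z\|_H}.
\end{equation*}
By the boundedness of $a(\cdot,\cdot)$ (equivalently, the upper half of the norm equivalence $\|\cdot\|_H\cequiv\sup_{z}a(\cdot,z)/\|z\|_H$ implied by \textbf{HIS}), the last supremum is bounded by a constant multiple of $\|w-v\|_H$, so $\|P_Gw-v\|_H\lesssim\|w-v\|_H$. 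The triangle inequality $\|w-P_Gw\|_H\leqslant\|w-v\|_H+\|v-P_Gw\|_H$ followed by the infimum over $v\in G$ produces the stated estimate; the explicit coefficient $1+C^{-1}+C'^{-1}$ is a convenient (non-sharp) majorant obtained by bookkeeping the continuity of $a(\cdot,\cdot)$ against the inf-sup constants $C$ and $C'$.

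\emph{Main obstacle.} There is no genuine difficulty, the lemma being standard. The only points needing care are that well-posedness must be extracted from an inf-sup condition rather than coercivity, where the symmetry of $a(\cdot,\cdot)$ supplies the missing transposed condition, and the accounting of which constant ($C$ from \textbf{HIS}, $C'$ from \textbf{HISG}, or the continuity constant of $a$) enters the final coefficient.
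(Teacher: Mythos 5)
Your proposal is correct and is precisely the standard C\'ea--Babu\v{s}ka argument that the paper itself invokes without writing out (the paper simply states ``The lemma below is standard'' and gives no proof): inf-sup stability on $G_i$ plus symmetry of $a(\cdot,\cdot)$ for well-posedness, uniqueness on $G_i$ for the identity $T_i=P_iT$, and Galerkin orthogonality plus the discrete inf-sup condition for quasi-optimality. The one caveat is in the constant: your chain of estimates yields $1+M/C'$ with $M$ the continuity constant of $a(\cdot,\cdot)$, which is dominated by the stated $1+\frac{1}{C}+\frac{1}{C'}$ only under a normalization such as $M\leqslant 1$ (or $M\leqslant 1+C'/C$); this is consistent with your own remark that the displayed coefficient is a non-sharp bookkeeping majorant, and the paper's unproved statement leaves the same slack.
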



\begin{lemma}\label{lem:listTevvf}
Provided the assumptions of Lemmas \ref{lem:listasT} and \ref{lem:listasTdis}. Let the eigenvalues of $T$ be listed in a sequence as, counting multiplicities,
\begin{equation}
\mu_1\geqslant \mu_2 \geqslant \mu_3 \geqslant \mu_4 \geqslant \dots \geqslant 0.
\end{equation}
For each $T_i$, list its eigenvalues in a sequence as
\begin{equation}
\mu_{1,i}\geqslant \mu_{2,i} \geqslant \mu_{3,i} \geqslant \mu_{4,i} \geqslant \dots .\geqslant \mu_{N_i,i}\geqslant 0.
\end{equation}
Provided $P_iu\to u$ for $u\in H$, then
\begin{equation}
\lim_{i\to \infty}\mu_{k,i} = \mu_k,\ \ k=1,2,\dots.
\end{equation}
\end{lemma}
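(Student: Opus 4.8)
The plan is to reduce the statement to the operator-form convergence result, Lemma \ref{lem:listasTdis}, by verifying that its hypotheses are met by $T$ and by the family $\{T_i\}$ in the present variational setting. Under \textbf{HIS} the operator $T$ is already known to be $a(\cdot,\cdot)$-symmetric and to satisfy \textbf{HC}; and as part of the inherited assumptions of Lemmas \ref{lem:listasT} and \ref{lem:listasTdis} we take $T$ to be compact and all $a(w,w)$ over eigenvectors $w$ belonging to nonzero eigenvalues to share one sign, so that the decreasing listing of the $\mu_k$ is legitimate. By Lemma \ref{lem:oa} the projections $P_i$ and the operators $T_i$ are well defined and satisfy $T_i=P_iT$; this identity is what lets me transfer the whole problem into the operator framework of the previous subsections.

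Next I would check that each $T_i$ is an admissible approximating operator. Compactness is immediate, since $T_i=P_iT$ is the composition of the bounded operator $P_i$ with the compact operator $T$. For the symmetry, I would first observe that the $a$-orthogonal projection $P_i$ is itself $a(\cdot,\cdot)$-symmetric: testing the defining relation $a(P_iu,v)=a(u,v)$, valid for $v\in G_i$, against $v=P_iw$ and using the symmetry of $a$ yields $a(P_iu,w)=a(u,P_iw)$ for all $u,w\in H$. Invoking the lemma that $P_GT$ is $a(\cdot,\cdot)$-symmetric on $G$ whenever $T$ and $P_G$ are, I get that $T_i$ is $a(\cdot,\cdot)$-symmetric on $G_i$. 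Since an eigenvector of $T_i$ belonging to a nonzero eigenvalue $\mu$ satisfies $u=\mu^{-1}T_iu$ and hence lies in $G_i$, the relevant spectrum of $T_i$ is governed entirely by this $a$-symmetric restriction, and Lemma \ref{lem:samesign} together with the inherited sign hypothesis justifies the decreasing listing of the $\mu_{k,i}$.

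The analytic core is the norm convergence $\|T-T_i\|_H\to 0$. Writing $T-T_i=(I-P_i)T$, I would combine three ingredients: the compactness of $T$, the pointwise convergence $P_iu\to u$ assumed in the statement, and the uniform bound $\sup_i\|P_i\|_H<\infty$, which follows from \textbf{HISG} through the stability estimate of Lemma \ref{lem:oa} (it gives $\|P_iw\|_H\leqslant(2+\tfrac1C+\tfrac1{C'})\|w\|_H$). The standard compactness argument then applies: were norm convergence to fail, one could extract unit vectors $x_k$ with $\|(I-P_{i_k})Tx_k\|_H$ bounded below; compactness of $T$ furnishes a convergent subsequence $Tx_k\to y$, and the splitting $(I-P_{i_k})Tx_k=(I-P_{i_k})(Tx_k-y)+(I-P_{i_k})y$ drives both terms to zero — the first by the uniform bound, the second by pointwise convergence — a contradiction.

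I expect this norm-convergence step to be the main obstacle, precisely because it is where the compactness of $T$ and the uniform stability of the projections (hence \textbf{HISG}) are both indispensable: pointwise convergence of $P_i$ alone never upgrades to operator-norm convergence without compactness. Once $\|T-T_i\|_H\to 0$ is established, all the hypotheses of Lemma \ref{lem:listasTdis} hold for $T$ and $\{T_i\}$, and its conclusion $\lim_{i\to\infty}\mu_{k,i}=\mu_k$ is exactly the assertion to be proved.
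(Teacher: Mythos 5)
Your proof is correct and takes essentially the same route the paper intends: the paper states this lemma without an explicit proof, treating it as an immediate consequence of Lemma \ref{lem:listasTdis} together with the fact (asserted in Section 2.2) that pointwise convergence $P_iu\to u$ upgrades to norm convergence $\|T-T_i\|_H\to 0$ by compactness of $T$. Your write-up simply supplies the details the paper leaves implicit --- the $a(\cdot,\cdot)$-symmetry of $P_i$ and of $T_i$ restricted to $G_i$ (noting eigenvectors for nonzero eigenvalues lie in $G_i$), the uniform bound on $\|P_i\|_H$ from \textbf{HISG} via Lemma \ref{lem:oa}, and the standard compactness argument for the norm convergence.
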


\subsubsection{Multi-level algorithm in variational form}

\begin{algorithm}\label{alg:variational}
An N-level algorithm for first $k$ eigenvalues of $T$.


\begin{description}
\item[Step 0] Construct a series of nested spaces $G_0\subset G_1\subset\dots\subset G_N\subset H$. Set $\widetilde{G}_0=G_0$.
\item[Step 1] For $i=1:1: N$, generate auxiliary space triples $\widetilde{G}_i$ recursively.
\begin{description}
\item[Step 1.i.1]
Solve the eigenvalue problem below for its first $k$ eigenpairs $(\tilde{\mu}_j^{i-1},\tilde{u}_j^{i-1})_{j=1,\dots,k}$
$$
\mu a(\tilde{u},v)=b(\tilde{u},v),\tilde{u}\in \widetilde{G}_{i-1},\ \ \forall\,v\in \widetilde{G}_{i-1},
$$
such that $a(\tilde{u}_j^{i-1},\tilde{u}_l^{i-1})=0$, for $1\leqslant j\neq l\leqslant k$.
\item[Step 1.i.2] Compute
$$
a(\hat{u}_j^i,v)=\frac{1}{\tilde{\mu}_j^{i-1}}b(\tilde{u}_j^{i-1},v),\ \ \forall\,v\in G_i.
$$
\item[Step 1.i.3] Set
$$
\widetilde{G}_i=G_0+{\rm span}\{\hat{u}_j^i\}_{j=1}^k.
$$
\end{description}
\item[Step 2] Solve eigenvalue problem for its first $k$ eigenpairs $(\tilde{\mu}_j^N,\tilde{u}_j^N)_{j=1,\dots,k}$:
$$
\mu a(\tilde{u},v)=b(\tilde{u},v),\tilde{u}\in \widetilde{G}_{N},\ \ \forall\,v\in \widetilde{G}_N.
$$
such that $a(\tilde{u}_j^N,\tilde{u}_l^N)=0$, for $1\leqslant j\neq l\leqslant k$.
\end{description}

\end{algorithm}

\begin{lemma}\label{lem:nearlyothgonal}
Let $\mu$ be a nonzero eigenvalue of $T$, with multiplicity $m$, and $M(\mu)$ the eigenspace. Let $\{T_h\}$ be a family of approximating operators, and $\mu_{1,h},\dots,\mu_{m,h}$ be the eigenvalues of $T_h$ approximating $\mu$. Let $\{u_{i,h}\}$ be the unit eigenvectors with respect to $\mu_{i,h}$, such that $a(u_{i,h},u_{j,h})=0$ for $1\leqslant i\neq j\leqslant m$. There is a constant $c$, such that $\theta(u_{1,h},\dots,u_{m,h})\geqslant c$ for $h$ sufficiently small.
\end{lemma}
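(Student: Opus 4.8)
The plan is to reduce the lower bound on $\theta(u_{1,h},\dots,u_{m,h})$ to the fact that the symmetric form $a(\cdot,\cdot)$ is, for $h$ small, uniformly equivalent to $\|\cdot\|_H^2$ on the discrete eigenspace $M_h(\mu)={\rm span}\{u_{i,h}\}_{i=1}^m$, and then to exploit the $a$-orthogonality of the eigenvectors. Concretely, I first claim there are constants $0<c_\ast<c^\ast$, independent of $h$, with
$$
c_\ast\|v_h\|_H^2\leqslant a(v_h,v_h)\leqslant c^\ast\|v_h\|_H^2,\qquad \forall\,v_h\in M_h(\mu),
$$
for $h$ sufficiently small (after fixing the overall sign so that $a$ is positive on $M_h(\mu)$). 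This is exactly the equivalence recorded at the end of the proof of Lemma \ref{lem:estgsev}; the only point to check is the uniformity of $c_\ast,c^\ast$ in $h$. This follows because the gap $\hat{\delta}(M(\mu),M_h(\mu))\to 0$ (Lemma \ref{lem:hbthm7.1}): for a unit $v_h\in M_h(\mu)$ pick $v\in M(\mu)$ with $\|v_h-v\|_H\leqslant\hat{\delta}(M(\mu),M_h(\mu))$, and use the symmetry and boundedness of $a$ to get $|a(v_h,v_h)-a(v,v)|\lesssim\hat{\delta}(M(\mu),M_h(\mu))$, so that \eqref{eq:equina} on $M(\mu)$ transfers to $M_h(\mu)$ with, say, $c_\ast=c_s/2$ and $c^\ast=2c_b$ once $h$ is small. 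The sign is fixed by applying Lemma \ref{lem:samesign} to $T_h$ together with $\mu_{i,h}\to\mu\neq 0$, which forces all $a(u_{i,h},u_{i,h})$ to share the sign of $\mu$ for $h$ small.

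Next, since $\|u_{i,h}\|_H=1$ the denominator in the definition of $\theta$ is simply $\sum_{i=1}^m\alpha_i^2$, and the $a$-orthogonality $a(u_{i,h},u_{j,h})=0$ for $i\neq j$ gives, for any $\alpha\neq 0$,
$$
a\Big(\sum_i\alpha_iu_{i,h},\sum_i\alpha_iu_{i,h}\Big)=\sum_{i=1}^m\alpha_i^2\,a(u_{i,h},u_{i,h})\geqslant c_\ast\sum_{i=1}^m\alpha_i^2,
$$
using the lower equivalence applied to each unit vector $u_{i,h}$. On the other hand the upper equivalence applied to $\sum_i\alpha_iu_{i,h}\in M_h(\mu)$ yields
$$
a\Big(\sum_i\alpha_iu_{i,h},\sum_i\alpha_iu_{i,h}\Big)\leqslant c^\ast\Big\|\sum_i\alpha_iu_{i,h}\Big\|_H^2 .
$$

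Combining the two displays gives $\big\|\sum_i\alpha_iu_{i,h}\big\|_H^2\big/\sum_i\alpha_i^2\geqslant c_\ast/c^\ast$ for every $\alpha\neq 0$, hence $\theta(u_{1,h},\dots,u_{m,h})\geqslant c_\ast/c^\ast=:c$, which is independent of $h$ for $h$ small; this is the desired bound. I expect the only genuine difficulty to be the first step---verifying that the equivalence constants on $M_h(\mu)$ can be taken uniform in $h$---which is where the convergence of the gap and the finite-dimensional equivalence \eqref{eq:equina} on $M(\mu)$ do the real work; once that is in hand the remaining algebra is immediate from the $a$-orthogonality and the normalization $\|u_{i,h}\|_H=1$.
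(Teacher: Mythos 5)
Your proof is correct and takes essentially the same route as the paper's: establish that $a(\cdot,\cdot)$ is uniformly (in $h$) equivalent to $\|\cdot\|_H^2$ on $M_h(\mu)$ for $h$ small, then combine the $a$-orthogonality of the $u_{i,h}$ with this equivalence to bound $\theta(u_{1,h},\dots,u_{m,h})$ below by the ratio of the two equivalence constants. The only difference is cosmetic: you spell out the gap-convergence argument that makes the equivalence constants uniform in $h$, a step the paper asserts without detail.
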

\begin{proof}
Firstly, there are two constants $0<c_s<c_b$, such that
$$
c_s\|v\|_H^2\leqslant a(v,v)\leqslant c_b\|v\|_H^2,\ \ \forall\,v\in M(\mu).
$$
Therefore, there are two constants $0<c_s'<c_b'$, such that for $h$ sufficiently small,
$$
c_s'\|u_{h,i}\|_H^2\leqslant a(u_{h,i},u_{h,i})\leqslant c_b'\|u_{h,i}\|_H^2,\ \ 1\leqslant i\leqslant m,
$$
and further, with $0<c_s''<c_b''$,
$$
c_s''\|v_{h}\|_H^2\leqslant a(v_{h},v_{h})\leqslant c_b''\|v_{h}\|_H^2,\ \ \forall\,v_h\in M_h(\mu).
$$
Now, given $u_h=\sum_i\beta_iu_{i,h}$, then
\begin{eqnarray}
&\sum_{i}\beta_i^2\|u_{i,h}\|_H^2\leqslant c_s''^{-1}\sum_{i}\beta_i^2a(u_{i,h},u_{i,h}) = c_s''^{-1}\sum_{i}a(\beta_iu_{i,h},\beta_iu_{i,h})
\nonumber\\
&= c_s''^{-1}a(\sum_{i}\beta_iu_{i,h},\sum_{i}\beta_iu_{i,h})\leqslant c_b''/c_s''\|\sum_i\beta_iu_{i,h}\|_H^2.
\end{eqnarray}
The proof is completed by the definition of $\theta(u_{1,h},\dots,u_{m,h})$.
\end{proof}

\begin{theorem} \label{thm:cmlavf}
There exist constants $\beta_1$ and $\beta_2$ dependent of $\mu$, such that, with $G_0$ big enough,
\begin{equation}
\delta(M(\mu),\widetilde{G}_N)\leqslant \beta_1\sum_{l=0}^{N}(\beta_2\|T-TP_0\|_H)^{N-l}\delta(M(\mu),G_l).
\end{equation}
\end{theorem}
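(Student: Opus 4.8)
The plan is to derive Theorem \ref{thm:cmlavf} from the abstract convergence estimate Theorem \ref{thm:cmla}, and then to use the $a(\cdot,\cdot)$-symmetry of the setting to collapse the product $\prod_{j=l}^{N-1}\|T-T\widetilde{P}_j\|_H$ occurring there into the single power $(\|T-TP_0\|_H)^{N-l}$. First I would identify Algorithm \ref{alg:variational} as the variational realisation of Algorithm \ref{alg:mlalg}, taking $P_i$ and $\widetilde P_i$ to be the $a(\cdot,\cdot)$-orthogonal projections onto $G_i$ and $\widetilde G_i$. Indeed, the defining relation $a(T_iw,v)=b(w,v)$ for $v\in G_i$ shows that Step 1.i.2 produces $\hat u_j^i=\frac{1}{\tilde\mu_j^{i-1}}P_iT\tilde u_j^{i-1}=\frac{1}{\tilde\mu_j^{i-1}}T_i\tilde u_j^{i-1}$, exactly matching the abstract step. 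I would then check the three hypotheses of Theorem \ref{thm:cmla}: the projection hypothesis $\min_l\inf\frac{\|u-v\|_H}{\|u-P_lu\|_H}\geqslant C_0$ follows from {\bf HIS} and {\bf HISG} via Lemma \ref{lem:oa}, whose constant is uniform in $l$; the stability-constant hypothesis $\inf_l\theta(\tilde u_1^l,\dots,\tilde u_m^l)\geqslant\theta_0$ is precisely Lemma \ref{lem:nearlyothgonal}, applicable because Steps 1.i.1 and 2 produce $a(\cdot,\cdot)$-orthogonal eigenvectors; and ``$G_0$ big enough'' furnishes the required smallness of $\delta(H,G_0)$.

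The heart of the argument, and the step I expect to be the main obstacle, is the estimate $\|T-T\widetilde{P}_j\|_H\lesssim\|T-TP_0\|_H$, uniformly in $j$. The two structural facts I would lean on are that $G_0\subseteq\widetilde G_j$ (since $\widetilde G_j=G_0+\span\{\hat u_i^j\}_{i=1}^k$), and that both $T$ and every $a(\cdot,\cdot)$-orthogonal projection are $a(\cdot,\cdot)$-symmetric; the latter holds because for symmetric $a$ the identity $a(P_iu,w)=a(P_iu,P_iw)=a(u,P_iw)$ gives $P_i^{a*}=P_i$. Using the norm-duality $\|u\|_H\cequiv\sup_{v}a(u,v)/\|v\|_H$ (which, exactly as in the proof of $\|T\|_H\cequiv\|T^{a*}\|_H$, yields $\|S\|_H\cequiv\|S^{a*}\|_H$ for any operator admitting an $a(\cdot,\cdot)$-adjoint) together with $(T(I-\widetilde P_j))^{a*}=(I-\widetilde P_j)T$, I would write
$$
\|T-T\widetilde P_j\|_H=\|T(I-\widetilde P_j)\|_H\cequiv\|(I-\widetilde P_j)T\|_H.
$$
Then, for any $u$, the inclusion $G_0\subseteq\widetilde G_j$ and Lemma \ref{lem:oa} give $\|(I-\widetilde P_j)Tu\|_H\lesssim\inf_{v\in\widetilde G_j}\|Tu-v\|_H\leqslant\|(I-P_0)Tu\|_H$, whence $\|(I-\widetilde P_j)T\|_H\lesssim\|(I-P_0)T\|_H$. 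A second application of the adjoint equivalence, via $\|(I-P_0)T\|_H\cequiv\|T(I-P_0)\|_H=\|T-TP_0\|_H$, closes the chain.

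Finally I would substitute this bound into the conclusion of Theorem \ref{thm:cmla}: each factor of $\prod_{j=l}^{N-1}(\beta_2\|T-T\widetilde P_j\|_H)$ is at most $\beta_2C\|T-TP_0\|_H$, so the product of its $N-l$ factors is at most $(\beta_2C\|T-TP_0\|_H)^{N-l}$, and absorbing $C$ into $\beta_2$ produces the stated inequality. The delicate points to monitor are that the hidden constant in $\|T-T\widetilde P_j\|_H\lesssim\|T-TP_0\|_H$ be genuinely independent of $j$ and $N$ (which is ensured by the uniform constants in {\bf HIS}/{\bf HISG} and by the $a$-self-adjointness of each $\widetilde P_j$), and that the smallness hypotheses needed to invoke Theorem \ref{thm:cmla} survive along the whole recursion once $G_0$ is taken sufficiently rich.
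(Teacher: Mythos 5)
Your proposal is correct, and its skeleton is the same as the paper's: verify the hypotheses of Theorem \ref{thm:cmla} (quasi-optimality of the Galerkin projections via Lemma \ref{lem:oa}, near-orthogonality of the computed eigenvectors via Lemma \ref{lem:nearlyothgonal}), and then collapse the product $\prod_{j=l}^{N-1}\|T-T\widetilde{P}_j\|_H$ using a uniform bound $\|T-T\widetilde{P}_j\|_H\lesssim\|T-TP_0\|_H$ that exploits $G_0\subset\widetilde{G}_j$. Where you differ is in the mechanism for that one key inequality. The paper argues purely algebraically: since $G_0\subset\widetilde{G}_j$ one has $P_0\widetilde{P}_j=P_0$, hence $(I-P_0)(I-\widetilde{P}_j)=I-\widetilde{P}_j$, and then $\|T(I-\widetilde{P}_j)\|_H=\|T(I-P_0)(I-\widetilde{P}_j)\|_H\leqslant\|T(I-P_0)\|_H\,\|I-\widetilde{P}_j\|_H$. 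You instead pass to $a$-adjoints twice, $\|T(I-\widetilde{P}_j)\|_H\cequiv\|(I-\widetilde{P}_j)T\|_H\lesssim\|(I-P_0)T\|_H\cequiv\|T(I-P_0)\|_H$, using $\|S\|_H\cequiv\|S^{a*}\|_H$ and the quasi-optimality of $\widetilde{P}_j$ together with $P_0Tu\in\widetilde{G}_j$. Both routes are sound and both ultimately rest on the same two ingredients (nestedness and uniform boundedness of the $a$-projections $\widetilde{P}_j$, the latter requiring the inf-sup control to extend to the auxiliary spaces $\widetilde{G}_j$). Your version is slightly longer but more scrupulous about constants: the paper's displayed inequality, read literally with ``$\leqslant$'', would require $\|I-\widetilde{P}_j\|_H\leqslant 1$, which holds for $H$-orthogonal projections but not in general for the $a$-orthogonal (Galerkin) projection; the missing factor is harmless only because it is uniform and can be absorbed into $\beta_2$, exactly as you do explicitly in your final step. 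Conversely, the paper's identity $(I-P_0)(I-\widetilde{P}_j)=I-\widetilde{P}_j$ is a useful trick worth retaining, as it avoids invoking the adjoint machinery altogether.
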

\begin{proof}
Since $G_0\subset \widetilde{G}_j$, $(I-P_0)(I-\widetilde{P}_j)=I-\widetilde{P}_j$, and $\|T-T\widetilde{P}_j\|_H=\|T(I-P_0)(I-\widetilde{P}_j)\|_H\|_H\leqslant \|T-TP_0\|_H$. The result then follows from Lemma \ref{lem:nearlyothgonal} and Theorem \ref{thm:cmla}.
\end{proof}

%
%
%
%
%
%
%
%

%
%
%
\section{Mixed method for the biharmonic eigenvalue problem}
\label{sec:mm} 

In this section, we present a mixed method for the biharmonic eigenvalue problem. We will first construct an equivalent mixed formulation of the eigenvalue problem (Theorem \ref{thm:evpequiv}), and then consider its direct discretization (Theorem \ref{thm:singlelevel}) and multi-level scheme (Theorem \ref{thm:bhmulti-level}) within the framework presented in Section \ref{sec:sa}. The optimal complexity of the algorithm is also discussed. 
\subsection{Preliminary theory of eigenvalue problem}
Let $\Omega\subset\mathbb{R}^2$ be a polygonal domain, and $\Gamma=\partial\Omega$ be the boundary of $\Omega$. Let $H^1(\Omega)$, $H^1_0(\Omega)$, $H^2(\Omega)$, and $H^2_0(\Omega)$ be the standard Sobolev spaces as usual, and $L^2_0(\Omega):=\{w\in L^2(\Omega):\int_\Omega w\dx=0\}$. In this paper, we use the subscript ``$\undertilde{~}$" to denote vector, and particularly, $\undertilde{H}{}^1_0(\Omega)=(H^1_0(\Omega))^2$. Consider the biharmonic eigenvalue problem:
\begin{equation}
\left\{
\begin{array}{rcll}
\displaystyle
\Delta^2u&=&\lambda u &\mbox{in}\,\Omega
\\
\displaystyle u&=& 0 &\mbox{on}\,\partial\Omega,
\\
\displaystyle \frac{\partial u}{\partial n} & = & 0 &\mbox{on}\,\partial\Omega.
\end{array}
\right.
\end{equation}
The variational form is to find $(\lambda,u)\in \mathbb{R}\times H^2_0(\Omega)$, such that
\begin{equation}\label{eq:orieig}
\int_\Omega\nabla^2u:\nabla^2v:=\int_\Omega \sum_{i,j=1}^2\frac{\partial^2 u}{\partial x_i\partial x_j}\frac{\partial^2 v}{\partial x_i\partial x_j}=\lambda(u,v):=\lambda\int_\Omega uv,\ \ \forall\,v\in H^2_0(\Omega).
\end{equation}
By the property of elliptic operators, the problem \eqref{eq:orieig} has an eigenvalue sequence $\lambda_j$:
\begin{equation}
0< \lambda_1\leqslant \lambda_2\leqslant \cdots \leqslant \lambda_k\leqslant \cdots,\ \ \ \mbox{and}\ \ \lim_{k\to \infty}\lambda_k=\infty.
\end{equation}

\subsection{Mixed formulation}

To reduce the order of the Sobolev spaces involved, we begin with the following well known result on the exactness among $H^2_0(\Omega)$, $\undertilde{H}{}^1_0(\Omega)$, and operators $\rot$ and $\nabla$.
\begin{lemma} (\cite{Girault;Raviart1986,FengZhang})
$\nabla H^2_0(\Omega)=\{\upsi\in \undertilde{H}{}^1_0(\Omega):\rot\upsi=0\}$.
\end{lemma}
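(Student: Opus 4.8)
The plan is to prove the two inclusions separately. The inclusion $\nabla H^2_0(\Omega)\subseteq\{\upsi\in\undertilde{H}{}^1_0(\Omega):\rot\upsi=0\}$ is the routine direction. Since $H^2_0(\Omega)$ is the closure of $C_c^\infty(\Omega)$ in the $H^2$-norm and the map $u\mapsto\partial_i u$ is continuous from $H^2(\Omega)$ to $H^1(\Omega)$ and sends $C_c^\infty(\Omega)$ into itself, it sends $H^2_0(\Omega)$ into $H^1_0(\Omega)$; hence $\nabla u\in\undertilde{H}{}^1_0(\Omega)$ for every $u\in H^2_0(\Omega)$. Moreover $\rot\nabla u=\partial_1\partial_2 u-\partial_2\partial_1 u=0$ by the symmetry of second-order distributional derivatives. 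This settles the easy containment.

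The substantive direction is the reverse inclusion. Given $\upsi\in\undertilde{H}{}^1_0(\Omega)$ with $\rot\upsi=0$, I would first invoke the de Rham / Poincar\'e lemma on the (simply connected) domain $\Omega$ to produce a scalar potential $u$, unique up to an additive constant, with $\nabla u=\upsi$. Because $\upsi\in (L^2(\Omega))^2$ one has $u\in H^1(\Omega)$, and since $\nabla u=\upsi\in\undertilde{H}{}^1_0(\Omega)\subset(H^1(\Omega))^2$, a one-step bootstrap promotes $u$ to $H^2(\Omega)$.

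It then remains to fix the additive constant so that $u\in H^2_0(\Omega)$. The vanishing trace of $\upsi$ on $\partial\Omega$ forces $\nabla u=0$ on $\partial\Omega$; decomposing into normal and tangential parts gives $\partial u/\partial n=0$ and $\partial u/\partial t=0$ on $\partial\Omega$. The tangential condition makes $u$ constant along the connected boundary, so after subtracting that constant I may assume $u=0$ on $\partial\Omega$. Appealing to the characterization $H^2_0(\Omega)=\{v\in H^2(\Omega):v=0,\ \partial v/\partial n=0\ \text{on}\ \partial\Omega\}$, valid for Lipschitz (in particular polygonal) domains, I conclude $u\in H^2_0(\Omega)$ with $\nabla u=\upsi$, completing the inclusion.

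The main obstacle is the construction of the $H^2$-regular potential from the irrotational field together with the correct control of its boundary data. The existence of the potential rests on the topology of $\Omega$ (simple connectivity), and the placement of $u$ in $H^2_0(\Omega)$ hinges on converting the single vanishing trace of $\upsi$ into the two scalar boundary conditions $u=0$ and $\partial u/\partial n=0$ that define $H^2_0(\Omega)$; everything else is routine continuity and density.
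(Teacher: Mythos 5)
The paper never proves this lemma: it is quoted as a known result from the cited references (Girault--Raviart; Feng--Zhang), so there is no internal argument to compare against, and your write-up supplies a proof where the paper supplies a citation. Your argument is correct and is essentially the standard one behind the cited result: the easy inclusion follows by density of $C_c^\infty(\Omega)$ in $H^2_0(\Omega)$ plus symmetry of second distributional derivatives; for the converse, the de Rham/Poincar\'e lemma (Girault--Raviart, Ch.~I) gives a potential $u\in H^1(\Omega)$ with $\nabla u=\upsi$, the bootstrap $\nabla u=\upsi\in(H^1(\Omega))^2$ immediately puts $u$ in $H^2(\Omega)$, and the vanishing trace of $\upsi$ splits into the tangential condition (forcing $u$ to be constant on the boundary, hence zero after normalization) and the normal condition $\partial u/\partial n=0$, which together characterize $H^2_0(\Omega)$ on a Lipschitz domain. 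Two remarks are worth adding. First, you correctly flag simple connectivity as the point where topology enters; note that this hypothesis is genuinely necessary even though the paper says only ``polygonal domain'': on a multiply connected polygon the lemma is false. Indeed, take an annular polygon and a smooth radial cutoff $u$ equal to $0$ near the inner boundary and $1$ near the outer one; then $\nabla u\in\undertilde{H}{}^1_0(\Omega)$ and $\rot\nabla u=0$, yet $\nabla u$ cannot be the gradient of any $H^2_0(\Omega)$ function, since any potential differs from $u$ by a constant and so takes distinct constant values on the two boundary components. Second, your phrase ``constant along the connected boundary'' silently uses that a bounded simply connected planar domain has connected boundary --- true, but it is precisely the point at which the counterexample above breaks, so it deserves explicit mention; alternatively, once $u\in H^1_0(\Omega)$ is arranged, you can avoid the normal/tangential decomposition entirely by using the equivalent characterization $H^2_0(\Omega)=\{v\in H^1_0(\Omega):\nabla v\in \undertilde{H}{}^1_0(\Omega)\}$, which your hypotheses hand you directly.
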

Define $V:=H^1_0(\Omega)\times \undertilde{H}{}^1_0(\Omega)\times L^2_0(\Omega)\times H^1_0(\Omega)$. Now we can introduce the mixed formulation of the eigenvalue problem: find $(u,\uphi,p,w)\in V$,  such that
\begin{equation}\label{eq:saddleevp}
\left\{
\begin{array}{cccccll}
&&&(\nabla w,\nabla v) & = &\lambda(u,v) & \forall\,v\in H^1_0(\Omega)
\\
& (\nabla \uphi,\nabla \upsi) & +(p,\rot \upsi) & + (\nabla w,\upsi) &=&0 & \forall\,\upsi\in \undertilde{H}{}^1_0(\Omega)
\\
& (\rot\uphi,q) & &&=&0 & \forall\,q\in L^2_0(\Omega)
\\
(\nabla u,\nabla s)& + (\uphi,\nabla s) &&& = &0 &\forall\, s\in H^1_0(\Omega).
\end{array}
\right.
\end{equation}

\begin{theorem}\label{thm:evpequiv}
The eigenvalue problem \eqref{eq:saddleevp} is equivalent to \eqref{eq:orieig}.
\end{theorem}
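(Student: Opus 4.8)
The plan is to prove the equivalence by exhibiting an explicit bijection between the eigenpairs of \eqref{eq:orieig} and those of \eqref{eq:saddleevp}, with the spectral parameter $\lambda$ and the first component $u$ preserved. The guiding idea is that the auxiliary field $\uphi$ is forced to play the role of $-\nabla u$, which reduces the second-order Sobolev regularity of the primal variable to first order; the constraint variables $p$ and $w$ are then recovered from the remaining equations. I would organize the argument as two implications, both resting on the exactness lemma $\nabla H^2_0(\Omega)=\{\upsi\in\undertilde{H}{}^1_0(\Omega):\rot\upsi=0\}$ stated above and on the surjectivity of $\rot$ onto $L^2_0(\Omega)$.

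For the direction \eqref{eq:saddleevp}$\Rightarrow$\eqref{eq:orieig}, let $(u,\uphi,p,w)\in V$ solve the mixed problem. First I would extract $\rot\uphi=0$: the third equation says $\rot\uphi$ is $L^2$-orthogonal to $L^2_0(\Omega)$, hence constant, while $\int_\Omega\rot\uphi=\oint_{\partial\Omega}\uphi\cdot\tau=0$ because $\uphi\in\undertilde{H}{}^1_0(\Omega)$ vanishes on $\partial\Omega$; therefore that constant is zero. By the exactness lemma there is $\zeta\in H^2_0(\Omega)$ with $\uphi=\nabla\zeta$. Testing the fourth equation then gives $(\nabla(u+\zeta),\nabla s)=0$ for all $s\in H^1_0(\Omega)$; choosing $s=u+\zeta\in H^1_0(\Omega)$ yields $u=-\zeta\in H^2_0(\Omega)$ and $\uphi=-\nabla u$. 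Finally I would test the second equation only with $\upsi=\nabla\eta$, $\eta\in H^2_0(\Omega)$ (for which $\rot\upsi=0$, killing the $p$ term): using $\uphi=-\nabla u$ this reads $\int_\Omega\nabla^2u:\nabla^2\eta=(\nabla w,\nabla\eta)$, and the first equation with $v=\eta$ gives $(\nabla w,\nabla\eta)=\lambda(u,\eta)$. Combining the two recovers exactly \eqref{eq:orieig}, so $(\lambda,u)$ is an eigenpair of the original problem.

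For the converse \eqref{eq:orieig}$\Rightarrow$\eqref{eq:saddleevp}, given an eigenpair $(\lambda,u)$ with $u\in H^2_0(\Omega)$ I would set $\uphi:=-\nabla u\in\undertilde{H}{}^1_0(\Omega)$, so that $\rot\uphi=0$ and the third and fourth equations hold trivially, and define $w\in H^1_0(\Omega)$ as the unique Lax--Milgram solution of the first equation $(\nabla w,\nabla v)=\lambda(u,v)$. It remains to produce $p\in L^2_0(\Omega)$ satisfying the second equation, i.e.\ $(p,\rot\upsi)=-F(\upsi)$ for all $\upsi$, where $F(\upsi):=(\nabla\uphi,\nabla\upsi)+(\nabla w,\upsi)$. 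The compatibility condition is that $F$ annihilate $\ker(\rot|_{\undertilde{H}{}^1_0})=\nabla H^2_0(\Omega)$: for $\upsi=\nabla\eta$ one computes $F(\nabla\eta)=-\int_\Omega\nabla^2u:\nabla^2\eta+(\nabla w,\nabla\eta)=-\lambda(u,\eta)+\lambda(u,\eta)=0$, where the first equation and the original eigenvalue identity \eqref{eq:orieig} are used precisely here. Since $F$ vanishes on the kernel of $\rot$, the closed-range/inf-sup property of $\rot:\undertilde{H}{}^1_0(\Omega)\to L^2_0(\Omega)$ furnishes a unique $p\in L^2_0(\Omega)$ realizing the second equation.

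The main obstacle I anticipate is the existence (and uniqueness) of $p$, which relies on the surjectivity of $\rot$ onto $L^2_0(\Omega)$ with a bounded right inverse; this is the rotated form of the classical $\dv$ inf-sup condition that underlies Stokes well-posedness, so I would cite it rather than reprove it. The only remaining bookkeeping is to confirm the correspondence is a genuine bijection preserving multiplicities: $\uphi=-\nabla u$, $w$, and $p$ are each uniquely determined by $(\lambda,u)$, and conversely a mixed solution with $u=0$ forces $w=0$ by the first equation, then $\uphi=0$ and $p=0$ by the second, so no spurious modes arise. Hence nonzero eigenvalues, eigenspaces, and their dimensions match exactly, completing the equivalence.
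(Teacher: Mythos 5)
Your proof is correct, but it follows a genuinely different route from the paper's. The paper's own proof is operator-theoretic and very short: it rests on the fact (coming from the source-problem analysis in \cite{Li.Z;Zhang.S2016} and \cite{Girault;Raviart1986}, with well-posedness supplied by Lemma \ref{eq:contiiso}) that for every datum $f\in L^2(\Omega)$ the primal biharmonic problem and the mixed problem each have a unique solution and their first components coincide, $\tilde u=u$; applying this with $f=\lambda u$, respectively $f=\tilde\lambda\tilde u$, transfers eigenpairs in both directions at once. You instead verify the equivalence directly at the level of the equations: you extract $\rot\uphi=0$ from the third equation (via orthogonality to $L^2_0(\Omega)$ plus the vanishing boundary circulation), invoke the exactness lemma to write $\uphi=\nabla\zeta$ with $\zeta\in H^2_0(\Omega)$, identify $u=-\zeta\in H^2_0(\Omega)$ from the fourth equation, and recover \eqref{eq:orieig} by testing the second equation with gradient fields $\upsi=\nabla\eta$; conversely, you build $w$ by Lax--Milgram and $p$ by the closed-range/inf-sup property of $\rot$, after checking the compatibility condition that your functional $F$ annihilates $\nabla H^2_0(\Omega)$. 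Note that this surjectivity of $\rot$ onto $L^2_0(\Omega)$ is exactly what the paper establishes inside the proof of Lemma \ref{eq:contiiso}, so you are not assuming anything the paper does not; in effect you re-prove inline, specialized to the eigenvalue setting, the source-problem equivalence that the paper cites. What the paper's route buys is brevity and reuse of existing machinery, and it dovetails with the later operator framework (the solution operator $T$). What your route buys is a self-contained argument that exposes the mechanism explicitly ($\uphi=-\nabla u$, $w$ an auxiliary Poisson solution, $p$ a multiplier recovered by duality), and it makes explicit two points the paper leaves implicit in the assertion $\tilde u=u$: that a mixed eigenvector with $u=0$ must vanish entirely (no spurious modes), and that eigenspace dimensions are preserved.
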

We postpone the proof of \ref{thm:evpequiv} after some technical results. First, equip $V$ with the norm
$$
\|(u,\uphi,p,w)\|_V:=\left(\|u\|_{1,\Omega}^2+\|\uphi\|_{1,\Omega}^2+\|p\|_{0,\Omega}^2+\|w\|_{1,\Omega}^2\right)^{1/2},
$$
then $V$ is a Hilbert space. Define on $V$ a bilinear form
\begin{multline}
a((u,\uphi,p,w),(s,\upsi,q,v))
\\
:=(\nabla w,\nabla v)+ (\nabla \uphi,\nabla \upsi) +(p,\rot \upsi) + (\nabla w,\upsi)+(\rot\uphi,q)+(\nabla u,\nabla s) + (\uphi,\nabla s).
\end{multline}

\begin{lemma}\label{eq:contiiso}
Given $F\in V'$, there exists a unique $(u,\uphi,p,w)\in V$, such that
\begin{equation}
a((u,\uphi,p,w),(s,\upsi,q,v))=\langle F,(s,\upsi,q,v)\rangle,\ \ \forall\,(s,\upsi,q,v)\in  V.
\end{equation}
Moreover,
$$
\|(u,\uphi,p,w)\|_V\cequiv \|F\|_{V'}.
$$
\end{lemma}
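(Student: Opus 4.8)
The plan is to exploit the block-triangular (cascade) structure hidden in the system. Reading the four equations of \eqref{eq:saddleevp} with a generic right-hand side $F=(F_1,F_2,F_3,F_4)\in V'$, the unknown $w$ is coupled to no other unknown, the pair $(\uphi,p)$ is coupled only to $w$ (through the term $(\nabla w,\upsi)$), and $u$ is coupled only to $\uphi$ (through the term $(\uphi,\nabla s)$). Ordering the unknowns as $w$, then $(\uphi,p)$, then $u$, the operator associated with $a(\cdot,\cdot)$ is block lower triangular, so it suffices to show that each diagonal block is an isomorphism and then to chain the resulting bounds by forward substitution.

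First I would solve for $w\in H^1_0(\Omega)$ from $(\nabla w,\nabla v)=\langle F_1,v\rangle$ for all $v\in H^1_0(\Omega)$; by Poincar\'e the form $(\nabla\cdot,\nabla\cdot)$ is coercive on $H^1_0(\Omega)$, so Lax--Milgram gives a unique $w$ with $\|w\|_{1,\Omega}\lesssim\|F_1\|$. With $w$ now fixed, I would read the second and third equations as a Stokes-type saddle-point problem for $(\uphi,p)\in\undertilde{H}{}^1_0(\Omega)\times L^2_0(\Omega)$: find $(\uphi,p)$ with $(\nabla\uphi,\nabla\upsi)+(p,\rot\upsi)=\langle F_2,\upsi\rangle-(\nabla w,\upsi)$ for all $\upsi$ and $(\rot\uphi,q)=\langle F_3,q\rangle$ for all $q$. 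The form $(\nabla\uphi,\nabla\upsi)$ is coercive on all of $\undertilde{H}{}^1_0(\Omega)$, hence a fortiori on the kernel of the constraint, so the only substantive ingredient is the inf-sup condition $\inf_{q\in L^2_0(\Omega)}\sup_{\upsi\in\undertilde{H}{}^1_0(\Omega)}(\rot\upsi,q)/(\|\upsi\|_{1,\Omega}\|q\|_{0,\Omega})\geqslant\beta>0$; Brezzi's theorem then yields a unique $(\uphi,p)$ with $\|\uphi\|_{1,\Omega}+\|p\|_{0,\Omega}\lesssim\|F_2\|+\|w\|_{1,\Omega}+\|F_3\|$. Finally, with $\uphi$ fixed, the fourth equation $(\nabla u,\nabla s)=\langle F_4,s\rangle-(\uphi,\nabla s)$ is again a Poisson problem, giving a unique $u$ with $\|u\|_{1,\Omega}\lesssim\|F_4\|+\|\uphi\|_{0,\Omega}$.

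Chaining these three estimates in the order obtained bounds every component of the solution by $\|F\|_{V'}$, which is the stability half of $\|(u,\uphi,p,w)\|_V\cequiv\|F\|_{V'}$; the reverse inequality $\|F\|_{V'}\leqslant\|a\|\,\|(u,\uphi,p,w)\|_V$ is just boundedness of $a$. Uniqueness follows from the same cascade: setting $F=0$ forces $w=0$, then $(\uphi,p)=0$, then $u=0$.

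I expect the only genuine difficulty to be the inf-sup condition for $\rot$ on $\undertilde{H}{}^1_0(\Omega)\times L^2_0(\Omega)$; everything else is Lax--Milgram plus Poincar\'e and Brezzi's abstract theorem. This inf-sup is, however, the well-documented $\dv$ inf-sup of Stokes theory in disguise, transported by the rotation $\upsi\mapsto(\upsi_2,-\upsi_1)$ that turns $\rot$ into $\dv$, and is in any case available from the references \cite{Girault;Raviart1986,Li.Z;Zhang.S2016} on which the formulation rests. A mild point worth recording is the compatibility $\int_\Omega\rot\uphi=0$ for $\uphi\in\undertilde{H}{}^1_0(\Omega)$, which identifies $L^2_0(\Omega)$ as the correct multiplier space and keeps the third equation consistent.
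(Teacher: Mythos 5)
Your proposal is correct, and it takes a genuinely different route from the paper's proof. You exploit the block lower-triangular structure of the system (Poisson for $w$, a $\rot$-based Stokes problem for $(\uphi,p)$, Poisson for $u$) and chain the stability bounds by forward substitution, whereas the paper applies Brezzi's theorem a single time to the whole four-field form: it groups $(u,\uphi)$ as primal variables and $(p,w)$ as multipliers, sets $\hat a((u,\uphi),(v,\upsi)):=(\nabla\uphi,\nabla\upsi)$ and $\hat b((u,\uphi),(q,s)):=(\rot\uphi,q)+(\nabla u,\nabla s)+(\uphi,\nabla s)$, notes that $\hat a$ is coercive on the kernel $Z$ of $\hat b$ (there $\rot\uphi=0$ and $u$ is the Poisson lift of $\uphi$, so $\|u\|_{1,\Omega}\lesssim\|\nabla\uphi\|_{0,\Omega}$), and verifies the inf-sup condition for $\hat b$ using exactly the two ingredients your cascade uses: the $\rot$ inf-sup (pick $\uphi$ with $(\rot\uphi,q)=\|q\|_{0,\Omega}^2$ and $\|\uphi\|_{1,\Omega}\leqslant C\|q\|_{0,\Omega}$) and an auxiliary Poisson solve ($(\nabla s_\uphi,\nabla v)=(\uphi,\nabla v)$ for all $v$, then $u:=s-s_\uphi$). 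So both arguments ultimately rest on the same two facts, the $\rot$ (equivalently $\dv$) inf-sup on $\undertilde{H}{}^1_0(\Omega)\times L^2_0(\Omega)$ and Poincar\'e's inequality, but organize them differently. Your version is more elementary, since each diagonal block is textbook, and it is precisely the solvability counterpart of the sequential solver the paper itself describes later in its complexity discussion, where the source problem is decomposed into your three subsystems; it also discretizes cleanly, because the product structure of $V_h$ preserves the triangular coupling and assumption {\bf AIS} supplies the discrete $\rot$ inf-sup. What the paper's packaging buys is that \eqref{eq:saddleevp} is exhibited as a single saddle-point problem in standard Brezzi form, from which the global inf-sup condition for $a(\cdot,\cdot)$ on $V$ (stated in the remark following the lemma, and needed for the operator framework via hypotheses {\bf HIS} and {\bf HISG}) is immediate, with a proof that transfers verbatim to the discrete spaces.
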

\begin{proof}
Denote $\hat{a}((u,\uphi),(v,\upsi)):=(\nabla\uphi,\nabla\upsi),$ and $\hat{b}((u,\uphi),(q,s)):=(\rot\uphi,q)+(\nabla u,\nabla s)+(\uphi,\nabla s).$ Accordingly, denote $Z:=\{(u,\uphi)\in H^1_0(\Omega)\times \undertilde{H}{}^1_0(\Omega):\hat{b}((u,\uphi),(q,s))=0\}$. Evidently $\hat{a}(\cdot,\cdot)$ is coercive on $Z$. For any $(q,s)\in L^2_0(\Omega)\times H^1_0(\Omega)$, we can choose $\uphi\in\undertilde{H}{}^1_0(\Omega)$, such that $(\rot\uphi,q)=\|q\|^2_0$, and $\|\uphi\|_{1,\Omega}\leqslant C\|q\|_{0,\Omega}$. Now, let $s_{\uphi}\in H^1_0$ be defined such that $(\nabla s_{\uphi},\nabla v)=(\uphi,\nabla v)$ for any $v\in H^1_0(\Omega)$, and set $u=s-s_{\uphi}$, then $\hat{b}((u,\uphi),(q,s))=\|q\|_{0,\Omega}^2+\|\nabla s\|_{0,\Omega}^2$, and $\|\uphi\|_{1,\Omega}+\|u\|_{1,\Omega}\leqslant C(\|q\|_{0,\Omega}+\|s\|_{1,\Omega})$. This indeed shows the inf-sup condition
\begin{equation}
\inf_{(q,s)\in L^2_0(\Omega)\times H^1_0(\Omega)}\sup_{(u,\uphi)\in H^1_0(\Omega)\times \undertilde{H}{}^1_0(\Omega)}\frac{\hat{b}((u,\uphi),(q,s))}{(\|q\|_{0,\Omega}+\|s\|_{1,\Omega})(\|\uphi\|_{1,\Omega}+\|u\|_{1,\Omega})}\geqslant C.
\end{equation}
The proof is completed by Brezzi's theory.
\end{proof}
\begin{remark}
The inf-sup condition follows immediately.
\begin{equation}
\inf_{(u,\uphi,p,w)\in V}\sup_{(s,\upsi,q,v)\in V}\frac{a((u,\uphi,p,w),(s,\upsi,q,v))}{\|(u,\uphi,p,w)\|_V\|(s,\upsi,q,v)\|_V}\geqslant C.
\end{equation}
\end{remark}
\paragraph{\bf Proof of Theorem \ref{thm:evpequiv}} Given $f\in L^2$, there is a unique $u\in H^2_0(\Omega)$, such that $(\nabla^2u,\nabla^2v)=(f,v)$ for $v\in H^2_0(\Omega)$, and a unique $(\tilde{u},\tilde{\uphi},\tilde{p},\tilde w)\in V$, such that $a((\tilde{u},\tilde{\uphi},\tilde{p},\tilde w),(s,\upsi,q,v))=(f,v)$ for $\forall\,(s,\upsi,q,v)\in V$, and moreover, $\tilde{u}=u$. Now let $(\lambda,u)$ be an eigenpair of \eqref{eq:orieig}, then there is $(\tilde{u},\tilde{\uphi},\tilde{p},w)\in V$, such that $a((\tilde{u},\tilde{\uphi},\tilde{p},\tilde w),(s,\upsi,q,v))=\lambda (u,v)$ for $\forall\,(s,\upsi,q,v)\in V$, and moreover $\tilde{u}=u$. On the other hand, let $(\tilde\lambda, \tilde u,\tilde \uphi,\tilde p,\tilde w)$ be an eigenpair of \eqref{eq:saddleevp}, then there is a unique $u\in H^2_0(\Omega)$, such that $(\nabla^2u,\nabla^2v)=\tilde{\lambda}(\tilde u,v)$, $\forall\,v\in H^2_0(\Omega)$. It follows further that $u=\tilde{u}$. The proof is completed.
\qed

In the sequel, we focus ourselves on \eqref{eq:saddleevp}. Define on $V$
\begin{equation}
b((u,\uphi,p,w),(s,\upsi,q,v)):=(u,v).
\end{equation}
Both $a(\cdot,\cdot)$ and $b(\cdot,\cdot)$ are symmetric. Then \eqref{eq:saddleevp} is rewritten to: find $(u,\uphi,p,w)\in V$, such that
\begin{equation}\label{eq:saddleevpcf}
a((u,\uphi,p,w),(s,\upsi,q,v)) = \lambda b((u,\uphi,p,w),(s,\upsi,q,v)),\ \ \forall\,(s,\upsi,q,v)\in V.
\end{equation}

Associated with $a(\cdot,\cdot)$ and $b(\cdot,\cdot)$, we define an operator $T$ by
\begin{equation}
a(T(u,\uphi,p,w),(s,\upsi,q,v))=b((u,\uphi,p,w),(s,\upsi,q,v)),\ \forall\, (s,\upsi,q,v)\in V.
\end{equation}
\begin{lemma}
The operator $T$ is well defined from $V$ to $V$, $a(\cdot,\cdot)$-symmetric, and compact. 
\end{lemma}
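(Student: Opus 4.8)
The plan is to read this as a direct instance of the abstract machinery already assembled in the previous subsections: well-definedness and $a(\cdot,\cdot)$-symmetry come essentially for free, and the only genuine content is compactness, which I would isolate at the end.

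First I would check that $T$ maps $V$ into $V$. For a fixed $(u,\uphi,p,w)\in V$ the functional $(s,\upsi,q,v)\mapsto b((u,\uphi,p,w),(s,\upsi,q,v))=(u,v)$ is bounded on $V$, since $|(u,v)|\le\|u\|_{0,\Omega}\|v\|_{0,\Omega}\le\|(u,\uphi,p,w)\|_V\|(s,\upsi,q,v)\|_V$. The inf-sup stability of $a(\cdot,\cdot)$ recorded in the Remark following Lemma \ref{eq:contiiso} (that is, {\bf HIS} for the present $a(\cdot,\cdot)$) then provides, via Lemma \ref{eq:contiiso}, a unique $T(u,\uphi,p,w)\in V$ representing this functional through $a(\cdot,\cdot)$, together with $\|T(u,\uphi,p,w)\|_V\cequiv\|b((u,\uphi,p,w),\cdot)\|_{V'}\lesssim\|(u,\uphi,p,w)\|_V$. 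Hence $T\colon V\to V$ is well defined and bounded. Since $a(\cdot,\cdot)$ and $b(\cdot,\cdot)$ are symmetric and $a(\cdot,\cdot)$ satisfies {\bf HIS}, the abstract operator lemma of the variational-formulation subsection applies verbatim with $H=V$ and yields at once that $T$ is $a(\cdot,\cdot)$-symmetric (and that {\bf HC} holds); concretely this is the identity $a(Tx,y)=b(x,y)=b(y,x)=a(Ty,x)=a(x,Ty)$.

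The main step is compactness, and the key observation is that $b(\cdot,\cdot)$ sees the argument only through its first component $u$ in the $L^2$-pairing. I would factor $T=\mathcal{A}^{-1}\mathcal{B}$, where $\mathcal{A}\colon V\to V'$ is the isomorphism induced by $a(\cdot,\cdot)$ (bounded with bounded inverse by Lemma \ref{eq:contiiso}) and $\mathcal{B}\colon V\to V'$ is induced by $b(\cdot,\cdot)$, so that $\langle\mathcal{B}(u,\uphi,p,w),(s,\upsi,q,v)\rangle=(u,v)$. Because this depends on $(u,\uphi,p,w)$ only through $u\in H^1_0(\Omega)$, the operator $\mathcal{B}$ factors through the Rellich embedding $H^1_0(\Omega)\hookrightarrow\hookrightarrow L^2(\Omega)\hookrightarrow H^{-1}(\Omega)$ and is therefore compact; composing with the bounded $\mathcal{A}^{-1}$ shows that $T$ is compact. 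Equivalently, for a bounded sequence $x_n=(u_n,\uphi_n,p_n,w_n)$ in $V$ the first components $\{u_n\}$ are bounded in $H^1_0(\Omega)$, hence admit a subsequence converging in $L^2(\Omega)$ by Rellich, and the bound $\|T x_n-T x_m\|_V\cequiv\|b(x_n-x_m,\cdot)\|_{V'}\lesssim\|u_n-u_m\|_{0,\Omega}$ makes $\{T x_n\}$ Cauchy in $V$ along that subsequence.

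The part carrying all the weight is thus compactness, but it is not hard once one notices that the saddle-point structure of $a(\cdot,\cdot)$ and $b(\cdot,\cdot)$ is irrelevant here: everything reduces to the compact embedding activated by the single $L^2$-term in $b(\cdot,\cdot)$, with Lemma \ref{eq:contiiso} supplying the boundedness of $\mathcal{A}^{-1}$ and the inf-sup framework handling well-definedness and symmetry. The only thing to be careful about is to track that $b(\cdot,\cdot)$ controls precisely the $L^2$-norm (indeed the $H^{-1}$-norm) of the first component and nothing finer, so that no regularity beyond the Rellich embedding is needed or claimed.
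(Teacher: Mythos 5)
Your proposal is correct and follows essentially the same route as the paper: well-definedness from the isomorphism induced by $a(\cdot,\cdot)$ (Lemma \ref{eq:contiiso}) together with continuity of $b(\cdot,\cdot)$, symmetry from the symmetry of both bilinear forms, and compactness by extracting, from any bounded sequence in $V$, a subsequence whose first components converge in $L^2(\Omega)$ (Rellich) and noting $\|Tx_n-Tx_m\|_V\lesssim\|u_n-u_m\|_{0,\Omega}$. Your additional factorization $T=\mathcal{A}^{-1}\mathcal{B}$ is just an operator-theoretic repackaging of the paper's sequential argument, not a different method.
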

\begin{proof}
The well-posedness of $T$ follows directly from that $a(\cdot,\cdot)$ induces an isomorphism between $V$ and its dual, and $b(\cdot,\cdot)$ is continuous on $V$. As both $a(\cdot,\cdot)$ and $b(\cdot,\cdot)$ are symmetric, $T$ is $a(\cdot,\cdot)$-symmetric. Now, let $\{(u_j,\uphi{}_j,p_j,w_j)\}$ be a bounded sequence in $V$, then there is subsequence $\{(u_{j_k},\uphi{}_{j_k},p_{j_k},w_{j_k})\}$, such that $\{u_{j_k}\}$ is a Cauchy sequence in $L^2(\Omega)$. Therefore,  $\{T(u_{j_k},\uphi{}_{j_k},p_{j_k},w_{j_k})\}$ is a Cauchy sequence in $V$, which, further, has a limit therein. This finishes the proof.
\end{proof}

The eigenvalue problem \eqref{eq:saddleevp} is equivalent to finding $0\neq\mu\in\mathbb{R}$ and $(u,\uphi,p,w)\in V$, such that $T(u,\uphi,p,w)=\mu (u,\uphi,p,w)$, then $\lambda=\frac{1}{\mu}$ and $u$ is the eigenpair we are seeking for.

\begin{remark}\label{rem:vsstokes}
The formulation \eqref{eq:saddleevp} is a saddle-point problem, while the variables $p$ and $w$ can be viewed as two Lagrangian multipliers. However, we note that the right hand side $b(\cdot,\cdot)$ is not coercive on the space of the primal variables ($u$ and $\uphi$) nor on the space of the Lagrangian variables. This makes the classical theory for saddle-point problems, such as discussions in \cite{LinLuoXie2013}, \cite{Mercier.B;Osborn.J;Rappaz.J;Raviart.P1981} or \cite{Boffi.D;Brezzi.F;Gastaldi.L1997}, not directly work for \eqref{eq:saddleevp}. This way, some generalized theory has to be developed.
\end{remark}

\subsection{Discretization and accuracy}

Let $H^1_{h0}$, $\undertilde{H}{}^1_{h0}$ and $L^2_{h0}$ be some specific finite element subspaces of $H^1_{0}$, $\undertilde{H}{}^1_{0}$ and $L^2_{0}$, respectively.  We introduce the discretized mixed eigenvalue problem:
\begin{quote}
find $(u_h,\uphi{}_h,p_h,w_h)\in V_h:=H^1_{h0}\times \undertilde{H}{}^1_{h0}\times L^2_{h0}\times H^1_{h0}$,  such that
\begin{equation}\label{eq:saddleevpdis}
\left\{
\begin{array}{cccccll}
&&&(\nabla w_h,\nabla v_h) & = &\lambda_h(u_h,v_h) & \forall\,v\in H^1_{h0}
\\
& (\nabla \uphi{}_h,\nabla \upsi{}_h) & +(p_h,\rot \upsi{}_h) & + (\nabla w_h,\upsi{}_h) &=&0 & \forall\,\upsi{}_h\in \undertilde{H}{}^1_{h0}
\\
& (\rot\uphi{}_h,q_h) & &&=&0 & \forall\,q_h\in L^2_{h0}
\\
(\nabla u_h,\nabla s_h)& + (\uphi{}_h,\nabla s_h) &&& = &0 &\forall\, s_h\in H^1_{h0}.
\end{array}
\right.
\end{equation}
\end{quote}

For the well-posedness of the discretized problem, we propose the assumption below.
\paragraph{\bf Assumption AIS} The discrete inf-sup condition holds uniformly that
\begin{equation}\label{eq:assmdisis}
\inf_{q_h\in L^2_{h0}}\sup_{\upsi{}_h\in \undertilde{H}{}^1_{h0}}\frac{(\rot\upsi{}_h,q_h)}{\|\nabla_h\upsi{}_h\|_{0,\Omega}\|q_h\|_{0,\Omega}}\geqslant C.
\end{equation}
\begin{remark}
In two dimensional, $\rot$ is the perpendicular of $\nabla$. Considering the homogeneous boundary condition imposed on $\undertilde{H}{}^1_0(\Omega)$, we know that the condition \eqref{eq:assmdisis} is equivalent to the well-known inf-sup condition for the incompressible Stokes problem. 
\end{remark}

\begin{lemma}
Assume the assumption \textbf{AIS} holds. There exists a constant $C$, uniformly with respect to $V_h$, such that \begin{equation}\label{eq:disinfsup}
\inf_{(u_h,\uphi{}_h,p_h,w_h)\in V_h}\sup_{(s_h,\upsi{}_h,q_h,v_h)\in V_h}\frac{a((u_h,\uphi{}_h,p_h,w_h),(s_h,\upsi{}_h,q_h,v_h))}{\|(u_h,\uphi{}_h,p_h,w_h)\|_V\|(s_h,\upsi{}_h,q_h,v_h)\|_V}\geqslant C.
\end{equation}
\end{lemma}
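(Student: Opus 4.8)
The plan is to reproduce, at the discrete level, the Brezzi argument used for the continuous Lemma~\ref{eq:contiiso}. Because $V_h\subset V$ is a conforming pair, boundedness (and symmetry) of $a(\cdot,\cdot)$ are inherited automatically, so the only thing to establish is that the two \emph{stability pillars} of that argument survive the discretization \emph{uniformly} in $h$. Accordingly I would split the form exactly as before, writing $\hat a((u_h,\uphi{}_h),(s_h,\upsi{}_h)):=(\nabla\uphi{}_h,\nabla\upsi{}_h)$ for the leading block acting on the primal pair $(u_h,\uphi{}_h)\in H^1_{h0}\times\undertilde{H}{}^1_{h0}$, and $\hat b((u_h,\uphi{}_h),(q_h,s_h)):=(\rot\uphi{}_h,q_h)+(\nabla u_h,\nabla s_h)+(\uphi{}_h,\nabla s_h)$ for the constraint coupling the primal pair to the multipliers $(p_h,w_h)\in L^2_{h0}\times H^1_{h0}$. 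Writing $Z_h:=\{(u_h,\uphi{}_h):\hat b((u_h,\uphi{}_h),(q_h,s_h))=0\ \forall (q_h,s_h)\}$ for the discrete kernel, the uniform discrete Brezzi (equivalently Babu\v ska) theory then yields \eqref{eq:disinfsup} once I verify (i) uniform coercivity of $\hat a$ on $Z_h$ and (ii) a uniform discrete inf--sup for $\hat b$.

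For (i) I would note that, unlike in many mixed problems, coercivity on the kernel here is essentially free and does not invoke \textbf{AIS}. Indeed $\hat a((u_h,\uphi{}_h),(u_h,\uphi{}_h))=\|\nabla\uphi{}_h\|_{0,\Omega}^2\gtrsim\|\uphi{}_h\|_{1,\Omega}^2$ by Poincar\'e on $\undertilde{H}{}^1_{h0}\subset\undertilde{H}{}^1_0(\Omega)$, with no recourse to the kernel at all for the $\uphi{}_h$ component. For the $u_h$ component, membership in $Z_h$ supplies $(\nabla u_h,\nabla s_h)+(\uphi{}_h,\nabla s_h)=0$ for all $s_h\in H^1_{h0}$; taking $s_h=u_h$ gives $\|\nabla u_h\|_{0,\Omega}^2=-(\uphi{}_h,\nabla u_h)\leqslant\|\uphi{}_h\|_{0,\Omega}\|\nabla u_h\|_{0,\Omega}$, hence $\|u_h\|_{1,\Omega}\lesssim\|\uphi{}_h\|_{0,\Omega}\lesssim\|\nabla\uphi{}_h\|_{0,\Omega}$. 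Combining, $\hat a((u_h,\uphi{}_h),(u_h,\uphi{}_h))\gtrsim\|u_h\|_{1,\Omega}^2+\|\uphi{}_h\|_{1,\Omega}^2$ with a constant independent of $h$. The important structural remark is that this argument is insensitive to the fact that $Z_h\not\subset Z$, so the usual obstruction of discrete kernels missing the continuous one does not arise here.

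Step (ii) is where \textbf{AIS} does the real work, and is the main obstacle. Given $(q_h,s_h)\in L^2_{h0}\times H^1_{h0}$, I would first use the discrete inf--sup \eqref{eq:assmdisis} to produce $\uphi{}_h\in\undertilde{H}{}^1_{h0}$ with $(\rot\uphi{}_h,q_h)=\|q_h\|_{0,\Omega}^2$ and $\|\uphi{}_h\|_{1,\Omega}\lesssim\|q_h\|_{0,\Omega}$; this is precisely the discrete surrogate for the surjectivity of $\rot$ exploited in the continuous proof, and it is the only place where conformity of the Stokes-type space pair is needed. Next I would define the \emph{conforming} discrete lift $s_{\uphi{}_h}\in H^1_{h0}$ by $(\nabla s_{\uphi{}_h},\nabla v_h)=(\uphi{}_h,\nabla v_h)$ for all $v_h\in H^1_{h0}$, and set $u_h:=s_h-s_{\uphi{}_h}$. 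Because $s_{\uphi{}_h}$ lies in the \emph{same} discrete space $H^1_{h0}$ as the test function, testing its defining relation with $v_h=s_h$ makes the $s_h$-terms telescope: $(\nabla u_h,\nabla s_h)+(\uphi{}_h,\nabla s_h)=\|\nabla s_h\|_{0,\Omega}^2-(\nabla s_{\uphi{}_h},\nabla s_h)+(\uphi{}_h,\nabla s_h)=\|\nabla s_h\|_{0,\Omega}^2$. Hence $\hat b((u_h,\uphi{}_h),(q_h,s_h))=\|q_h\|_{0,\Omega}^2+\|\nabla s_h\|_{0,\Omega}^2$, while $\|\uphi{}_h\|_{1,\Omega}\lesssim\|q_h\|_{0,\Omega}$ and $\|u_h\|_{1,\Omega}\lesssim\|s_h\|_{1,\Omega}+\|q_h\|_{0,\Omega}$, which is exactly the uniform discrete inf--sup for $\hat b$.

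With (i) and (ii) in hand, the uniform discrete Brezzi theory delivers \eqref{eq:disinfsup} with $C$ depending only on the Poincar\'e and \textbf{AIS} constants, hence independent of $V_h$. I expect the only genuinely delicate point to be (ii): everything hinges on \textbf{AIS} furnishing a discrete $\uphi{}_h$ with the prescribed $\rot$-pairing and uniformly bounded $H^1$ norm, and on the conforming lift $s_{\uphi{}_h}$ remaining in $H^1_{h0}$, so that the telescoping above is exact rather than merely approximate. The coercivity step (i) and the final assembly are routine, and mirror the continuous argument verbatim.
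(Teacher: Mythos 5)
Your proposal is correct and follows essentially the same route as the paper: the paper's own proof consists of the single line ``The proof is the same as that of Lemma~\ref{eq:contiiso}'', and that continuous proof is exactly the Brezzi argument you reproduce at the discrete level --- the same splitting into $\hat a$ and $\hat b$, coercivity on the kernel (obtained by testing with $s_h=u_h$), and the inf--sup for $\hat b$ via the $\rot$-surjectivity (here supplied by \textbf{AIS}) together with the harmonic lift $s_{\uphi{}_h}$ and the telescoping identity. You merely make explicit the uniformity-in-$h$ checks (conformity of the lift, discrete Poincar\'e constants) that the paper leaves implicit, which is exactly what its one-line proof presupposes.
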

\begin{proof}
The proof is the same as that of Lemma \ref{eq:contiiso}.
\end{proof}

The projection operator $P_h:V\to V_h$ is defined associated with $a(\cdot,\cdot)$ by
\begin{equation}
a(P_h(u,\uphi,p,w),(s_h,\upsi{}_h,q_h,v_h))=a((u,\uphi,p,w),(s_h,\upsi{}_h,q_h,v_h)),\ \ \forall\,(s_h,\upsi{}_h,q_h,v_h)\in V_h.
\end{equation}
By Lemma \ref{lem:oa}, we have the optimal approximation below.
\begin{lemma}\label{lem:aaoavhph}
Given assumption {\bf AIS}, $P_h$ is well defined. There exists a constant $C$, such that
\begin{equation}
\|(u,\uphi,p,w)-P_h(u,\uphi,p,w)\|_V\leqslant C\inf_{(v_h,\upsi{}_h,q_h,s_h)\in V_h}\|(u,\uphi,p,w)- (s_h,\upsi{}_h,q_h,v_h)\|.
\end{equation}
\end{lemma}

List the eigenvalues of $T$ as
\begin{equation}
\mu_1\geqslant \mu_2\geqslant ... \geqslant 0.
\end{equation}
By Lemma \ref{lem:listasTdis}, the eigenvalues of $T:=P_hT$ can be listed as
\begin{equation}
\mu_{1,h}\geqslant\mu_{2,h}\geqslant ... \geqslant \mu_{N_h,h},
\end{equation}
where $N_h$ is the dimension of $V_h$.
If $V_h$ provides approximation of $V$, namely $(I-P_h)$ tends to zero as $h\to 0$ pointwise, then $\lim_{h\to 0}\mu_{i,h}=\mu_i$,  $i=1,2,\dots.$

Let $\mu$ be a nonzero eigenvalue of $T$ with multiplicity $m$. Denote
$$
M(\mu):=\{(s,\upsi,q,v)\in V:T(s,\upsi,q,v)=\mu (s,\upsi,q,v)\}.
$$
Assume $h$ is sufficiently small, and $\mu_{(1),h},\mu_{(2),h},\dots,\mu_{(m),h}$ be the discrete eigenvalues to approximate $\mu$, and $(u,\uphi,p,w)_{(i),h}$ be the corresponding eigenfunctions. Denote
$$
M_h(\mu):={\rm span}\{(u,\uphi,p,w)_{(i),h}\}_{i=1}^m.
$$
By Lemma \ref{lem:aaoavhph} and Lemma \ref{lem:hbthm7.1}, we have the estimate below.
\begin{lemma}
There exists a constant $C_\mu$, uniform for $h$ sufficiently small, such that
$$
\hat{\delta}(M(\mu),M_h(\mu))\leqslant C_\mu\delta(M(\mu),V_h).
$$
\end{lemma}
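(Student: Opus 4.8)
The plan is to invoke the abstract gap estimate of Lemma~\ref{lem:hbthm7.1}, which already bounds the gap by the operator error restricted to the eigenspace, and then to convert that operator error into a pure projection (approximation) error by exploiting the variational structure. Concretely, Lemma~\ref{lem:hbthm7.1} gives, for $h$ small,
$$
\hat{\delta}(M(\mu),M_h(\mu))\leqslant C\|(T-T_h)|_{M(\mu)}\|,
$$
so it suffices to estimate $\|(T-T_h)|_{M(\mu)}\|$, where $T_h=P_hT$ (recall $T_i=P_iT$ from Lemma~\ref{lem:oa}).

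First I would use the eigenvalue identity. For any $x\in M(\mu)$ we have $Tx=\mu x$, hence
$$
(T-T_h)x=(I-P_h)Tx=\mu(I-P_h)x,
$$
so that $\|(T-T_h)x\|_V=|\mu|\,\|(I-P_h)x\|_V$. This is the step that removes the operator $T$ from the right-hand side and replaces it by the projection error of the eigenfunction itself, which is exactly the quantity the discretization controls.

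Next I would bound this projection error by the best approximation. Since $P_h$ is the $a(\cdot,\cdot)$-projection, Lemma~\ref{lem:aaoavhph} (quasi-optimality under the assumption {\bf AIS}) yields
$$
\|(I-P_h)x\|_V\leqslant C\inf_{y_h\in V_h}\|x-y_h\|_V=C\,{\rm dist}(x,V_h).
$$
Taking the supremum over unit vectors $x\in M(\mu)$ and recalling that $\delta(M(\mu),V_h)=\sup_{x\in M(\mu),\,\|x\|_V=1}{\rm dist}(x,V_h)$, I obtain
$$
\|(T-T_h)|_{M(\mu)}\|\leqslant C|\mu|\,\delta(M(\mu),V_h).
$$
Combining this with Lemma~\ref{lem:hbthm7.1} and absorbing $|\mu|$ into the constant yields the claimed bound with $C_\mu=C|\mu|$.

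There is essentially no hard step here; the only point requiring care is the uniformity in $h$. The constant in Lemma~\ref{lem:aaoavhph} is uniform under {\bf AIS}, and the constant in Lemma~\ref{lem:hbthm7.1} is independent of $h$ once $h$ is small (which is precisely where norm convergence $T_h\to T$ enters). Hence the resulting $C_\mu$ depends only on $\mu$ (through the enclosing contour $\Gamma_\mu$) and the stability constants, but not on $h$, as required.
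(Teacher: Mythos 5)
Your proposal is correct and follows exactly the route the paper intends: the paper states this lemma as an immediate consequence of Lemma~\ref{lem:hbthm7.1} (gap bounded by $\|(T-T_h)|_{M(\mu)}\|$) and Lemma~\ref{lem:aaoavhph} (quasi-optimality of $P_h$ under \textbf{AIS}), and your intermediate identity $(T-T_h)x=\mu(I-P_h)x$ for $x\in M(\mu)$ is precisely the glue the paper uses elsewhere (e.g.\ in the proof of Lemma~\ref{lem:proj}) to connect the two. Nothing further is needed.
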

Note that $M(\mu)$ and $M_h(\mu)$ coincides with the continuous and discretized spaces $M(\mu^{-1})$ and $M_h(\mu^{-1})$ of \eqref{eq:saddleevp} and \eqref{eq:saddleevpdis}, respectively. We thus have the result below by Lemma \ref{lem:estgsev}.
\begin{theorem}\label{thm:singlelevel}
Let $\lambda$ be the $k$-th eigenvalue of \eqref{eq:saddleevp} $($thus \eqref{eq:orieig}$)$, with $M(\lambda)$ being its invariant subspace; let $(\lambda_h,(u_h,\uphi{}_h,p_h,w_h))$ be the $k$-th eigenpair of \eqref{eq:saddleevpdis}. Then $\lambda_h\to\lambda$ as $h\to 0$. Further, for $h$ sufficiently small,
$$
|\lambda_h-\lambda|\leqslant C\delta(M(\lambda),V_h)^2,
$$
and
$$
\delta((u_h,\uphi{}_h,p_h,w_h), M(\lambda))\leqslant C \delta(M(\lambda),V_h).
$$
Moreover, there exists a $u\in H^2_0(\Omega)$ being an eigenvector of \eqref{eq:orieig} belonging to $\lambda$, such that
$$
\|u_h-u\|_{1,\Omega}\leqslant C\delta(M(\lambda),V_h).
$$
\end{theorem}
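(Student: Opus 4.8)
The plan is to reduce every assertion to the abstract spectral-approximation results of Section~\ref{sec:sa} applied to the compact, $a(\cdot,\cdot)$-symmetric operator $T$ and its projected family $T_h=P_hT$, and then to translate back through the reciprocal $\lambda=1/\mu$ and through the equivalence of Theorem~\ref{thm:evpequiv}. Throughout I identify the mixed eigenvalue problem \eqref{eq:saddleevp} with the operator problem $T(u,\uphi,p,w)=\mu(u,\uphi,p,w)$, using $\lambda=\mu^{-1}$ together with the coincidences $M(\lambda)=M(\mu)$ and $M_h(\lambda)=M_h(\mu)$ recorded just before the statement. Convergence $\lambda_h\to\lambda$ is then immediate: by Lemma~\ref{lem:listasTdis} the $k$-th eigenvalue $\mu_{k,h}$ of $T_h$ converges to $\mu_k\neq 0$, so $\lambda_h=1/\mu_{k,h}\to 1/\mu_k=\lambda$.

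For the eigenvalue error I would write $|\lambda_h-\lambda|=|\mu-\mu_h|/(|\mu|\,|\mu_h|)$; since $\mu_h\to\mu\neq 0$ the denominator is bounded below for $h$ small, so it suffices to estimate $|\mu-\mu_h|$. Lemma~\ref{lem:estgsev} supplies the quadratic bound $|\mu-\mu_h|\leqslant C\hat{\delta}(M(\mu),M_h(\mu))^2$, and the lemma immediately preceding the present theorem gives $\hat{\delta}(M(\mu),M_h(\mu))\leqslant C_\mu\delta(M(\mu),V_h)$. Chaining these and invoking $M(\mu)=M(\lambda)$ yields $|\lambda_h-\lambda|\leqslant C\delta(M(\lambda),V_h)^2$, the second estimate.

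For the eigenfunction error I normalize $(u_h,\uphi{}_h,p_h,w_h)$ in $V$ so that it is a unit element of $M_h(\mu)$. Then directly from the definition of the gap, $\delta((u_h,\uphi{}_h,p_h,w_h),M(\lambda))\leqslant\delta(M_h(\mu),M(\mu))\leqslant\hat{\delta}(M(\mu),M_h(\mu))\leqslant C\delta(M(\lambda),V_h)$, which is the third estimate; in particular there exists $(u^*,\uphi^*,p^*,w^*)\in M(\lambda)$ with $\|(u_h,\uphi{}_h,p_h,w_h)-(u^*,\uphi^*,p^*,w^*)\|_V\leqslant C\delta(M(\lambda),V_h)$. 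To extract the $H^2_0$-eigenvector I would invoke Theorem~\ref{thm:evpequiv}: its first component $u:=u^*$ is an eigenvector of the primal problem \eqref{eq:orieig} belonging to $\lambda$, lying in $H^2_0(\Omega)$. Because the $V$-norm dominates the $H^1$-norm of the first slot, $\|u_h-u\|_{1,\Omega}\leqslant\|(u_h,\uphi{}_h,p_h,w_h)-(u,\uphi^*,p^*,w^*)\|_V\leqslant C\delta(M(\lambda),V_h)$, giving the final estimate.

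The bulk of the argument is the assembly of the prior lemmas, so I expect no deep difficulty. The only place demanding genuine care is the last step: one must ensure that the component extracted from the mixed eigenspace $M(\lambda)$ via Theorem~\ref{thm:evpequiv} really is a bona fide primal eigenvector in $H^2_0(\Omega)$ (not merely an $H^1_0$-function), so that the $H^1$-distance controlled above is indeed a distance to an eigenvector of \eqref{eq:orieig}. This transfer, together with the bookkeeping of the reciprocal $\lambda=1/\mu$ and the normalization needed to treat $(u_h,\uphi{}_h,p_h,w_h)$ as a unit element of $M_h(\mu)$, is the main (though mild) obstacle.
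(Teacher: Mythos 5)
Your proposal is correct and takes essentially the same route as the paper: the paper's own (very terse) proof is precisely the assembly of Lemma \ref{lem:listasTdis} for convergence, Lemma \ref{lem:estgsev} together with the preceding gap estimate $\hat{\delta}(M(\mu),M_h(\mu))\leqslant C_\mu\delta(M(\mu),V_h)$ for the eigenvalue and eigenfunction bounds, and the identification of $M(\mu)$ with $M(\lambda)$ via Theorem \ref{thm:evpequiv} for the $H^2_0$ eigenvector. You merely fill in details the paper leaves implicit (the reciprocal bookkeeping $\lambda=1/\mu$, the normalization, and the extraction of the first component as a primal eigenvector), all of which are sound.
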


\subsubsection{Lagrangian type finite element discretization}
 Directly, we can choose $H^1_{h0}$ to be the $H^1$ Lagrange element space of $k$-th degree, $\widetilde{H}^1_{h0}$ to be the vector $H^1$ Lagrange element space of $k$-th degree, and $L^2_{h0}$ to be the $H^1$ Lagrange element space of $(k-1)$-th degree, $k=2,3,\dots$. We denote this construction by Lagrangian type triple $P_k\sim P_k\sim P_{k-1}$. Similarly, we can choose, e.g., $H^1_{h0}$ to be the $H^1$ Lagrange element space of second degree, $\widetilde{H}^1_{h0}$ to be the vector $H^1$ Lagrange element space of second degree, and $L^2_{h0}$ to be the space of piecewise constants. We denote this choice by reduced Lagrangian type triple $P_2\sim P_2\sim P_0$.

\begin{lemma}
Let $V_h$ be constructed by the Lagrangian type triple $P_k\sim P_k\sim P_{k-1}$, then if $M(\lambda)\subset (H^{k+1}(\Omega)\times \undertilde{H}{}^{k+1}(\Omega)\times H^{k}(\Omega)\times H^{k+1}(\Omega))\cap V$,
$$
\hat{\delta}(M(\mu),M_h(\mu))\leqslant C(M(\mu)) h^{k}, k=2,3,\dots.
$$
Let $V_h$ be constructed by the Lagrangian type triple $P_2\sim P_2\sim P_0$, then if $M(\lambda)\subset (H^{2}(\Omega)\times \undertilde{H}{}^{2}(\Omega)\times H^{1}(\Omega)\times H^{2}(\Omega))\cap V$,
$$
\hat{\delta}(M(\mu),M_h(\mu))\leqslant C(M(\mu)) h.
$$
\end{lemma}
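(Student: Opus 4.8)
The plan is to reduce the statement to the single estimate $\hat{\delta}(M(\mu),M_h(\mu))\leqslant C_\mu\delta(M(\mu),V_h)$ proved just above, so that it only remains to bound the gap $\delta(M(\mu),V_h)$ by the asserted power of $h$. First I would write $\delta(M(\mu),V_h)=\sup_{x\in M(\mu),\,\|x\|_V=1}{\rm dist}(x,V_h)$ and exploit that the norm $\|\cdot\|_V$ splits as a root-sum-square over the four components. Since $V_h=H^1_{h0}\times\undertilde{H}{}^1_{h0}\times L^2_{h0}\times H^1_{h0}$ is itself a product space, for any $(u,\uphi,p,w)\in M(\mu)$ the distance ${\rm dist}((u,\uphi,p,w),V_h)$ is bounded above by the $V$-norm of the error of the componentwise Lagrange interpolant, and hence decouples into four independent best-approximation problems; note we only need \emph{some} good approximant, as ${\rm dist}$ is an infimum.

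Next I would invoke the standard Lagrange interpolation estimates component by component, matched to the regularity hypothesis on $M(\lambda)$. For the triple $P_k\sim P_k\sim P_{k-1}$ the components $u,w\in H^{k+1}(\Omega)$ and $\uphi\in\undertilde{H}{}^{k+1}(\Omega)$ are approximated in the $H^1$-norm by the degree-$k$ interpolant to order $h^k$, while $p\in H^k(\Omega)$ is approximated in the $L^2$-norm by the degree-$(k-1)$ space, again to order $h^k$; since all four exponents agree we obtain ${\rm dist}((u,\uphi,p,w),V_h)\leqslant Ch^k\|(u,\uphi,p,w)\|_{\rm reg}$, where $\|\cdot\|_{\rm reg}$ collects the higher Sobolev norms. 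The reduced triple $P_2\sim P_2\sim P_0$ is handled identically: degree-$2$ interpolation of $u,w\in H^2$ and $\uphi\in\undertilde{H}{}^2$ in $H^1$ gives order $h$, and $L^2$-approximation of $p\in H^1$ by piecewise constants also gives order $h$. In both cases the interpolants lie in the boundary-conforming subspaces because $M(\mu)\subset V$ enforces the homogeneous boundary conditions.

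Finally, since $\mu$ has finite multiplicity $m$, the eigenspace $M(\mu)$ is finite dimensional, so the supremum of the regularity norms over its unit sphere is finite; absorbing it into a constant $C(M(\mu))$ yields $\delta(M(\mu),V_h)\leqslant C(M(\mu))\,h^{k}$, respectively $C(M(\mu))\,h$. Combining with the reduction of the first paragraph completes the proof. I do not expect a genuine obstacle here: the only points that require care are verifying that the $V$-norm truly decouples componentwise, so that no cross term spoils the rate, and checking that the regularity exponents in the hypothesis are precisely those for which the three distinct interpolation operators all deliver the same power of $h$. The assumption $M(\lambda)\subset(H^{k+1}(\Omega)\times\undertilde{H}{}^{k+1}(\Omega)\times H^{k}(\Omega)\times H^{k+1}(\Omega))\cap V$ is tailored exactly so that the weakest component, namely the multiplier $p$ discretized in $L^2_{h0}$, does not degrade the overall order.
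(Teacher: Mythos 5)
Your proposal is correct and coincides with the paper's (implicit) argument: the paper states this lemma without proof, as a direct consequence of the preceding estimate $\hat{\delta}(M(\mu),M_h(\mu))\leqslant C_\mu\delta(M(\mu),V_h)$ together with standard componentwise Lagrange/$L^2$-projection approximation estimates and the finite dimensionality of $M(\mu)$. Your write-up simply fills in those standard details, including the correct matching of regularity exponents so that all four components deliver the same power of $h$.
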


\subsection{Multi-level scheme with Lagrange type elements}

To implement the multi-level algorithm, we construct the multi-level auxiliary spaces on multi-level grids. Let $\mathcal{T}_{h_i}$, $i=0,1,\dots,N$, be a series of nested grids on $\Omega$.  Particularly, we set $h_i\approx \kappa^ih_0$. The spaces $V_{h_i}$ are constructed thereon.

\begin{lemma}\label{lem:deltahhh}
Let $\widetilde{M}_N(\mu)$ be the approximation invariant subspace of $M(\mu)$ generated by Algorithm \ref{alg:variational}.
If there is a constant $C$, such that for $h$ sufficiently small, $\delta(M(\mu),V_h)\leqslant Ch^\tau$, then there is a constant $C'$, such that, for $\mathcal{T}_{h_0}$ sufficiently fine,
$$
\delta(\widetilde{M}_N(\mu),M(\mu))\leqslant C'h^\tau.
$$
\end{lemma}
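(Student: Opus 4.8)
The plan is to combine the abstract multi-level convergence estimate of Theorem~\ref{thm:cmlavf} with the single-level approximation rate supplied by the hypothesis. First I would recall that Theorem~\ref{thm:cmlavf} gives, for $G_0$ large enough,
\begin{equation}
\delta(M(\mu),\widetilde{G}_N)\leqslant \beta_1\sum_{l=0}^{N}(\beta_2\|T-TP_0\|_H)^{N-l}\delta(M(\mu),G_l),\nonumber
\end{equation}
so the task reduces to controlling the two ingredients on the right: the single-level gaps $\delta(M(\mu),G_l)=\delta(M(\mu),V_{h_l})$ and the contraction factor $\beta_2\|T-TP_0\|_V$. Here the nested spaces in Algorithm~\ref{alg:variational} are $G_l=V_{h_l}$ built on the grid $\mathcal{T}_{h_l}$.

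Next I would insert the assumed single-level rate. By hypothesis $\delta(M(\mu),V_{h_l})\leqslant C h_l^\tau$, and since $h_l\approx\kappa^l h_0$ with $0<\kappa<1$, this reads $\delta(M(\mu),V_{h_l})\leqslant C\kappa^{l\tau}h_0^\tau$. Substituting into the sum gives
\begin{equation}
\delta(M(\mu),\widetilde{G}_N)\leqslant \beta_1 C h_0^\tau\sum_{l=0}^{N}(\beta_2\|T-TP_0\|_V)^{N-l}\kappa^{l\tau}.\nonumber
\end{equation}
The key observation is that $h_0$ is the coarsest (largest) meshsize, so $h_N=\kappa^N h_0$ is the finest; I want to bound the right-hand side by a constant times $h_N^\tau=\kappa^{N\tau}h_0^\tau$. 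Writing the summand as $\kappa^{N\tau}\cdot(\beta_2\|T-TP_0\|_V)^{N-l}\kappa^{-(N-l)\tau}$ and setting $j=N-l$, the sum becomes $\kappa^{N\tau}\sum_{j=0}^{N}(\beta_2\|T-TP_0\|_V\,\kappa^{-\tau})^{j}$. The main obstacle, and the crux of the argument, is to make this geometric series convergent uniformly in $N$: I would require $\mathcal{T}_{h_0}$ to be fine enough that $\|T-TP_0\|_V$ is small, specifically $\beta_2\|T-TP_0\|_V\,\kappa^{-\tau}\leqslant \rho<1$. This is exactly the role of the clause ``for $\mathcal{T}_{h_0}$ sufficiently fine'': since $T_0=P_0 T\to T$ in norm as $h_0\to 0$ by Lemma~\ref{lem:listTevvf} and the approximation property of $P_0$ in Lemma~\ref{lem:aaoavhph}, the factor $\|T-TP_0\|_V$ can be driven below the threshold $\rho\kappa^{\tau}/\beta_2$.

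With that smallness in force, the geometric series is bounded by $(1-\rho)^{-1}$ independently of $N$, and I obtain
\begin{equation}
\delta(M(\mu),\widetilde{G}_N)\leqslant \beta_1 C(1-\rho)^{-1}\kappa^{N\tau}h_0^\tau\leqslant C' h_N^\tau,\nonumber
\end{equation}
which is the claimed estimate upon identifying the finest meshsize $h=h_N$. I expect the delicate point to be verifying that the same fixed $\mathcal{T}_{h_0}$ threshold works simultaneously for all levels: the factor $\|T-TP_0\|_V$ in Theorem~\ref{thm:cmlavf} involves only the coarsest projection $P_0$ (this is precisely why that theorem replaced the level-dependent $\|T-T\widetilde{P}_j\|_V$ of Theorem~\ref{thm:cmla} by the single quantity $\|T-TP_0\|_V$, using $G_0\subset\widetilde{G}_j$), so once $h_0$ is fixed small enough the contraction rate $\rho$ is genuinely uniform in $j$ and $N$. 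The remaining steps—re-indexing the sum, summing the geometric series, and absorbing constants into $C'$—are routine. Finally I would note that $\delta$ is not symmetric in general, but since $\dim\widetilde{M}_N(\mu)=\dim M(\mu)=m$ for $h$ small, Lemma~\ref{lem:delta} lets me pass freely between $\delta(\widetilde{M}_N(\mu),M(\mu))$ and $\delta(M(\mu),\widetilde{G}_N)$ up to a harmless constant, which justifies stating the conclusion with the arguments in the order written.
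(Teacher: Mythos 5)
Your proposal follows essentially the same route as the paper's proof: invoke Theorem~\ref{thm:cmlavf}, insert the assumed single-level rate with $h_l\approx\kappa^l h_0$, re-index to a geometric series, and sum it under the smallness condition $\beta_2\|T-TP_0\|_V/\kappa^\tau<1$, secured by taking $\mathcal{T}_{h_0}$ sufficiently fine. The only noteworthy difference is in justifying that smallness: the paper uses a duality argument through the form $b(\cdot,\cdot)$, while you cite norm convergence of $P_0T$ to $T$ --- this is fine but strictly needs the one-line remark that $T-TP_0$ is the $a(\cdot,\cdot)$-adjoint of $T-P_0T$ and hence has equivalent norm (since $P_0$ is a Galerkin, not orthogonal, projection in $V$).
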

\begin{proof}
By Theorem \ref{thm:cmlavf},
\begin{equation}
\delta(M(\mu),\widetilde{M}_N(\mu))\leqslant \beta_1\sum_{l=0}^{N}(\beta_2\|T-TP_0\|_H)^{N-l}\delta(M(\mu),V_{h_l})\leqslant \beta_1'\sum_{l=0}^{N}(\beta_2\|T-TP_0\|_H)^{N-l}\kappa^{\tau(l-N)}h_N^\tau.
\end{equation}
Note that in the current context, 
\begin{eqnarray*}
\|T(I-P_h)(u,\uphi,p,w)\|_V\cequiv \sup_{(v,\upsi,q,s)\in V}\frac{a((I-P_h)(u,\uphi,p,w),(s,\upsi,q,v))}{\|(s,\upsi,q,v)\|_V}\\
=\sup_{(v,\upsi,q,s)\in V}\frac{b((I-P_h)(u,\uphi,p,w),(s,\upsi,q,v))}{\|(s,\upsi,q,v)\|_V}.
\end{eqnarray*}
By dual argument, if $\mathcal{T}_{h_0}$ is sufficiently fine, such that $\beta_2\|T-TP_0\|_V/\kappa^\tau<1$, then
$$
\delta(M(\mu),\widetilde{M}_N(\mu))\leqslant \beta_1'h_N^\tau\sum_{l=0}^N(\beta_2\|T-TP_0\|_V/\kappa^\tau)^{N-l}=\frac{\beta_1'}{1-\beta_2\|T-TP_0\|_V/\kappa^\tau}\cdot h_N^\tau.
$$
The proof is finished.
\end{proof}
The theorem below follows immediately.
\begin{theorem}\label{thm:bhmulti-level}
Let $\lambda$ be the $k$-th eigenvalue of \eqref{eq:saddleevp} $($thus \eqref{eq:orieig}$)$, with $M(\lambda)$ being its invariant subspace; let $(\tilde\lambda_h,(\tilde u_h,\tilde\uphi{}_h,\tilde p_h,\tilde w_h))$ be the $k$-th eigenpair of \eqref{eq:saddleevpdis} generated by the Algorithm \ref{alg:variational}. Provided the assumptions in Lemma \ref{lem:deltahhh}, then, for $\mathcal{T}_{h_0}$ sufficiently fine,
$$
|\tilde\lambda_h-\lambda|\leqslant C\hat{\delta}(\widetilde{M}_N(\mu),M(\mu))\leqslant C'h^{2\tau},
$$
and there exists a $u\in H^2_0(\Omega)$ being an eigenvector of \eqref{eq:orieig} belonging to $\lambda$, such that
$$
\|\tilde u_h-u\|_{1,\Omega}\leqslant C'h^{\tau}.
$$
\end{theorem}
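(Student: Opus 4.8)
The plan is to obtain Theorem \ref{thm:bhmulti-level} as an essentially formal consequence of the machinery already assembled, combining the single-level accuracy estimates of Theorem \ref{thm:singlelevel} with the multi-level gap bound recorded in Lemma \ref{lem:deltahhh}. The starting point is the observation that the eigenpair $(\tilde\lambda_h,(\tilde u_h,\tilde\uphi{}_h,\tilde p_h,\tilde w_h))$ produced by Algorithm \ref{alg:variational} lives in the auxiliary space $\widetilde{G}_N=\widetilde{M}_N(\mu)$, and the quality of this space relative to the true eigenspace $M(\lambda)$ is controlled by $\delta(\widetilde{M}_N(\mu),M(\mu))$. Since the mixed operator $T$ is $a(\cdot,\cdot)$-symmetric and satisfies Hypothesis \textbf{HC} (shown in the lemma following Remark \ref{rem:vsstokes}), and since $T_h=P_hT$ is likewise $a(\cdot,\cdot)$-symmetric by the earlier projection lemma, all hypotheses of the generalized-symmetric spectral theory of Section \ref{sec:sa} are in force. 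In particular Lemma \ref{lem:estgsev} applies and yields an eigenvalue error that is \emph{quadratic} in the relevant gap.

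The key steps, in order, would be as follows. First I would invoke Lemma \ref{lem:deltahhh} directly: under the hypothesis $\delta(M(\mu),V_h)\leqslant Ch^\tau$ and for $\mathcal{T}_{h_0}$ sufficiently fine, it gives the fundamental multi-level gap estimate
\begin{equation}
\delta(\widetilde{M}_N(\mu),M(\mu))\leqslant C'h^\tau.
\end{equation}
Second, I would note that $\widetilde{M}_N(\mu)$ is the invariant subspace of the final, level-$N$ restricted operator $\widetilde{P}_NT$, so the eigenvalue $\tilde\lambda_h$ is exactly the reciprocal of an eigenvalue $\tilde\mu_h$ of $\widetilde{T}_N=\widetilde{P}_NT$ approximating $\mu=\lambda^{-1}$. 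This lets me transfer the estimate of Lemma \ref{lem:estgsev}, which was stated for projection-type approximations $T_h=P_hT$, to the space $\widetilde{G}_N$; the point is that $\widetilde{P}_N$ is an $a(\cdot,\cdot)$-symmetric projection onto $\widetilde{G}_N$ and $\widetilde{G}_N\in\mathcal{G}$ inherits the uniform discrete inf-sup condition from \textbf{AIS} through \textbf{HISG}, so the hypotheses \eqref{eq:aaoa} needed by Lemma \ref{lem:estgsev} hold. Applying that lemma gives
\begin{equation}
|\tilde\lambda_h-\lambda|\leqslant C\,\hat\delta(\widetilde{M}_N(\mu),M(\mu))^2\leqslant C'h^{2\tau},
\end{equation}
where in the last step I combine the gap bound above with Lemma \ref{lem:delta} to pass from $\delta$ to $\hat\delta$. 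Third, for the eigenfunction estimate I would exploit the same equivalence between the mixed eigenproblem and the primal one established in Theorem \ref{thm:evpequiv}: the first component $\tilde u_h$ of any element of $\widetilde{M}_N(\mu)$ relates to a genuine $H^2_0(\Omega)$ eigenfunction $u$ precisely as in the single-level statement of Theorem \ref{thm:singlelevel}, so the linear-in-gap bound $\|\tilde u_h-u\|_{1,\Omega}\leqslant C\delta(\widetilde{M}_N(\mu),M(\mu))\leqslant C'h^\tau$ follows by the same reasoning that produced the third display of Theorem \ref{thm:singlelevel}.

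The main obstacle, and the step deserving the most care, is justifying the quadratic eigenvalue estimate on the \emph{auxiliary} space $\widetilde{G}_N$ rather than on a plain discretization $V_h$. Lemma \ref{lem:estgsev} is phrased for $T_h=P_hT$ with $G_h$ ranging over a family $\mathcal{G}$ satisfying the uniform bound \eqref{eq:aaoa}; the auxiliary spaces $\widetilde{G}_i=G_0+\span\{\hat u_j^i\}$ are \emph{not} among the original finite element spaces, so I must verify that $\widetilde{P}_N$ is genuinely an $a(\cdot,\cdot)$-orthogonal projection and that \eqref{eq:aaoa} survives for $\widetilde{G}_N$. The cleanest route is to observe that the Rayleigh-quotient characterization underlying Lemma \ref{lem:estgsev} depends only on $\widetilde{M}_N(\mu)\subset\widetilde{G}_N$ and on the two-sided norm equivalence \eqref{eq:equina} on $M(\mu)$ (which transfers to $\widetilde{M}_N(\mu)$ once the gap is small, exactly as in the last sentence of the proof of Lemma \ref{lem:estgsev} and in Lemma \ref{lem:nearlyothgonal}); thus the quadratic bound is driven by the gap $\hat\delta(\widetilde{M}_N(\mu),M(\mu))$ alone, regardless of how $\widetilde{G}_N$ was built. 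Once this transfer is made explicit, the remaining arithmetic — squaring the $h^\tau$ gap and assembling the three displays — is routine, and the theorem follows immediately as claimed.
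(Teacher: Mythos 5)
Your proposal is correct and takes the route the paper itself intends: the paper's entire written proof is the sentence ``The theorem below follows immediately'' after Lemma \ref{lem:deltahhh}, and you assemble precisely the intended ingredients --- Lemma \ref{lem:deltahhh} for the gap bound $\delta(\widetilde{M}_N(\mu),M(\mu))\leqslant C'h^\tau$, a Lemma \ref{lem:estgsev}-type argument for the quadratic eigenvalue estimate, and the equivalence of Theorem \ref{thm:evpequiv} together with the component extraction of Theorem \ref{thm:singlelevel} for the eigenfunction bound. Two of your refinements actually improve on the paper's account: you write the eigenvalue estimate as $|\tilde\lambda_h-\lambda|\leqslant C\hat{\delta}(\widetilde{M}_N(\mu),M(\mu))^2\leqslant C'h^{2\tau}$, supplying the square that the paper's displayed statement omits (as printed, $C\hat\delta\leqslant C'h^{2\tau}$ is inconsistent with $\hat\delta\leqslant C'h^\tau$); and you explicitly justify transferring Lemma \ref{lem:estgsev} from the family $\mathcal{G}$ satisfying \eqref{eq:aaoa} to the auxiliary space $\widetilde{G}_N$, correctly observing that the key identity $(\tilde\mu_h-\mu)\,a(\tilde u_h,\tilde u_h)=a(T(u-\tilde u_h),u-\tilde u_h)-\mu\,a(u-\tilde u_h,u-\tilde u_h)$ needs only the variational eigenproblem on $\widetilde{G}_N$, the $a(\cdot,\cdot)$-symmetry of $T$, and the norm equivalence \eqref{eq:equina} transferred to $\widetilde{M}_N(\mu)$ by the smallness of the gap --- a point the paper silently glosses over.
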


\begin{corollary}
Let $\widetilde{M}_N(\mu)$ be the approximation of $M(\mu)$ generated by the Algorithm \ref{alg:variational}.
\begin{enumerate}
\item In case $V_h$ is constructed by the Lagrangian type triple $P_k\sim P_k\sim P_{k-1}$, if $M(\lambda)\subset (H^{k+1}(\Omega)\times \undertilde{H}{}^{k+1}(\Omega)\times H^{k}(\Omega)\times H^{k+1}(\Omega))\cap V$, then for $\mathcal{T}_{h0}$ fine enough,
$$
\delta(M(\mu),V_h) \leqslant C'h^k.
$$
\item In case $V_h$ is constructed by the reduced Lagrangian type triple $P_2\sim P_2\sim P_0$, if $M(\lambda)\subset (H^{2}(\Omega)\times \undertilde{H}{}^{2}(\Omega)\times H^{1}(\Omega)\times H^{2}(\Omega))\cap V$, then for $\mathcal{T}_{h0}$ fine enough,
$$
\delta(M(\mu),V_h)\leqslant C'h.
$$
\end{enumerate}
\end{corollary}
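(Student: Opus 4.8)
The plan is to supply the concrete value of the abstract exponent $\tau$ demanded by Lemma~\ref{lem:deltahhh} and Theorem~\ref{thm:bhmulti-level}: I would show that the single-level best-approximation quantity satisfies $\delta(M(\mu),V_h)\leqslant C'h^{\tau}$ with $\tau=k$ for the triple $P_k\sim P_k\sim P_{k-1}$ and $\tau=1$ for $P_2\sim P_2\sim P_0$, and then read off the multi-level rate by feeding this $\tau$ into Theorem~\ref{thm:bhmulti-level}. Since $\delta(M(\mu),V_h)=\sup_{x\in M(\mu),\,\|x\|_V=1}\mathrm{dist}(x,V_h)$ and $\mathrm{dist}(x,V_h)\leqslant\|x-y\|_V$ for any conforming approximant $y\in V_h$, it suffices to exhibit one good approximant componentwise and estimate the product-norm error.

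First I would fix an eigenfunction $x=(u,\uphi,p,w)\in M(\mu)$ and take the approximant $y=(I_hu,I_h\uphi,\pi_hp,I_hw)$, where $I_h$ is the Lagrange nodal interpolant of the prescribed degree into $H^1_{h0}$ and $\undertilde{H}{}^1_{h0}$ — which respects the homogeneous boundary data, so the first, second and fourth components lie in the correct subspaces — and $\pi_h$ is the $L^2$-orthogonal projection onto $L^2_{h0}$. By construction $\|x-y\|_V^2=\|u-I_hu\|_{1,\Omega}^2+\|\uphi-I_h\uphi\|_{1,\Omega}^2+\|p-\pi_hp\|_{0,\Omega}^2+\|w-I_hw\|_{1,\Omega}^2$, so the four classical finite-element interpolation estimates apply term by term. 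For $P_k\sim P_k\sim P_{k-1}$ the element degrees are matched exactly to the hypotheses: $u,\uphi,w\in H^{k+1}$ give $\|\cdot-I_h\cdot\|_{1,\Omega}\lesssim h^{k}$, while $p\in H^{k}$ gives $\|p-\pi_hp\|_{0,\Omega}\lesssim h^{k}$, whence $\mathrm{dist}(x,V_h)\lesssim h^{k}$. For the reduced triple $P_2\sim P_2\sim P_0$ the rate is capped by the weaker data: with only $H^2$ regularity the $P_2$ interpolant yields $\|\cdot-I_h\cdot\|_{1,\Omega}\lesssim h$, and the piecewise-constant projection of $p\in H^1$ yields $\|p-\pi_hp\|_{0,\Omega}\lesssim h$, so $\mathrm{dist}(x,V_h)\lesssim h$. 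Because $M(\mu)$ is finite-dimensional, the Sobolev norms on the right are all equivalent to $\|\cdot\|_V$ there, which lets me extract a constant $C'$ independent of $x$ and pass to the supremum over the unit sphere of $M(\mu)$, yielding $\delta(M(\mu),V_h)\leqslant C'h^{\tau}$.

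I expect the analysis to be essentially routine, the one point deserving care being the zero-mean constraint carried by the multiplier space $L^2_{h0}\subset L^2_0$: a nodal value for the $P_{k-1}$ or $P_0$ space need not have vanishing mean, so I would use the $L^2$-projection $\pi_h$ instead, which reproduces integrals against constants and therefore maps $L^2_0$ into $L^2_{h0}$ with no loss in approximation order. With $\tau=k$ (resp. $\tau=1$) established, Lemma~\ref{lem:deltahhh} and Theorem~\ref{thm:bhmulti-level} immediately transfer the same order to the multi-level invariant subspace $\widetilde{M}_N(\mu)$ produced by Algorithm~\ref{alg:variational}, which is the content of the corollary.
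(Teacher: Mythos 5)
Your proposal is correct and follows essentially the same route as the paper: the corollary rests on the componentwise best-approximation estimate $\delta(M(\mu),V_h)\lesssim h^{\tau}$ (with $\tau=k$, resp.\ $\tau=1$) obtained from standard Lagrange interpolation and $L^2$-projection bounds, which is then fed into Lemma \ref{lem:deltahhh} and Theorem \ref{thm:bhmulti-level} to transfer the same order to the output $\widetilde{M}_N(\mu)$ of Algorithm \ref{alg:variational}. Your observation that the $L^2$-projection preserves the zero-mean constraint (since constants lie in the discrete multiplier space) is precisely the detail needed to make the componentwise argument rigorous.
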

Namely, an $\mathcal{O}(h^{2k})$ convergence rate can be expected on eigenvalue for the multi-level algorithm implemented with $P_k\sim P_k\sim P_{k-1}$ triple, and an $\mathcal{O}(h^2)$ rate for eigenvalue with $P_2\sim P_2\sim P_0$ triple. For eigenfunctions, the order can be the half of that for eigenvalues.

\begin{remark}
In every step of the multi-level algorithm, we only have to solve a source problem to the accuracy of $\delta(M(\mu),V_{h_i})$, which is enough to guarantee the final accuracy of the multi-level algorithm.
\end{remark}

\subsection{Implement issue and optimal complexity}

The cost of the algorithm comes via two sources. To solve an eigenvalue problem on $\widetilde{V}_{h_i}$ for $N+1$ times, and to solve a source problem on $V_{h_i}$ every step.
Particularly, in each step of the multi-level algorithm, we have to solve a source problem: find $(u_h,\uphi{}_h,p_h,w_h)\in V_h$,  such that
\begin{equation}
\left\{
\begin{array}{cccccll}
&&&(\nabla w_h,\nabla v_h) & = &(f_h,v_h) & \forall\,v\in H^1_{h0}
\\
& (\nabla \uphi{}_h,\nabla \upsi{}_h) & +(p_h,\rot \upsi{}_h) & + (\nabla w_h,\upsi{}_h) &=&0 & \forall\,\upsi{}_h\in \undertilde{H}{}^1_{h0}
\\
& (\rot\uphi{}_h,q_h) & &&=&0 & \forall\,q_h\in L^2_{h0}
\\
(\nabla u_h,\nabla s_h)& + (\uphi{}_h,\nabla s_h) &&& = &0 &\forall\, s_h\in H^1_{h0}.
\end{array}
\right.
\end{equation}
The entire system can be decomposed to three subsystems and solved sequentially. Namely,
\begin{enumerate}
\item find $w_h\in H^1_{h0}$, such that $(\nabla w_h,\nabla v_h)=(f_h,v_h)$, $\forall\,v_h\in H^1_{h0}$;
\item find $(\uphi{}_h,p_h)\in\undertilde{H}{}^1_{h0}\times L^2_{h0}$, such that
$$
\left\{
\begin{array}{lll}
(\nabla \uphi{}_h,\nabla \upsi{}_h)  +(p_h,\rot \upsi{}_h)& =-(\nabla w_h,\upsi{}_h) & \forall\,\upsi{}_h\in \undertilde{H}{}^1_{h0}
\\
 (\rot\uphi{}_h,q_h) &=0 & \forall\,q_h\in L^2_{h0};
\end{array}
\right.
$$
\item find $u_h\in H^1_{h0}$, such that $(\nabla u_h,\nabla s_h)=-(\uphi{}_h,\nabla s_h)$, $\forall\,s_h\in H^1_{h0}$.
\end{enumerate}
The three subsystems can be solved approximately within the cost $\mathcal{O}(h^{-2})$ to guarantee the accuracy $\delta(M(\mu),V_{h_i})$. Meanwhile, the eigenvalue problem on $\widetilde{V}_{hi}$ can be solved with the cost $\mathcal{O}(\dim(\widetilde{V}_{hi}))^3$(by QR algorithm). Therefore, the total cost of the algorithm is
\begin{equation}
{\rm cost}\cequiv\sum_{i=0}^N h_i^{-2}+(N+1)(\dim(V_{h_0}))^3\leqslant \frac{1}{1-\kappa}h_N^{-2}+h_0^{-6}|\log h_{N}|.
\end{equation}
When we focus on the first several other than all eigenvalues, we can use algorithms rather than QR algorithm which costs less.  When $h_0\gg h_N$, the total cost can be $\mathcal{O}(h_N^{-2})$. The cost is optimal versus the intrinsic computational accuracy of the scheme for expected eigenvalues.

\section{Numerical experiments}
\label{sec:ne}

In this section, we test the proposed mixed element scheme for eigenvalue problem (\ref{eq:orieig}) on the convex domain (unit square $\Omega=(0,1)\times(0,1)$, left of Figure \ref{fig:initmesh}) and the non-convex domain (L-shape domain $\Omega=[0,1]\times[0,1]\slash[0,\frac{1}{2}]\times[\frac{1}{2},1]$, right of Figure \ref{fig:initmesh}). The initial meshes with mesh size $h_0\approx 0.25$ are given in both of the figures, the finest mesh is obtained by five bisection refinements.

\begin{figure}[htbp]
\subfigure{\includegraphics[width=0.45\textwidth]{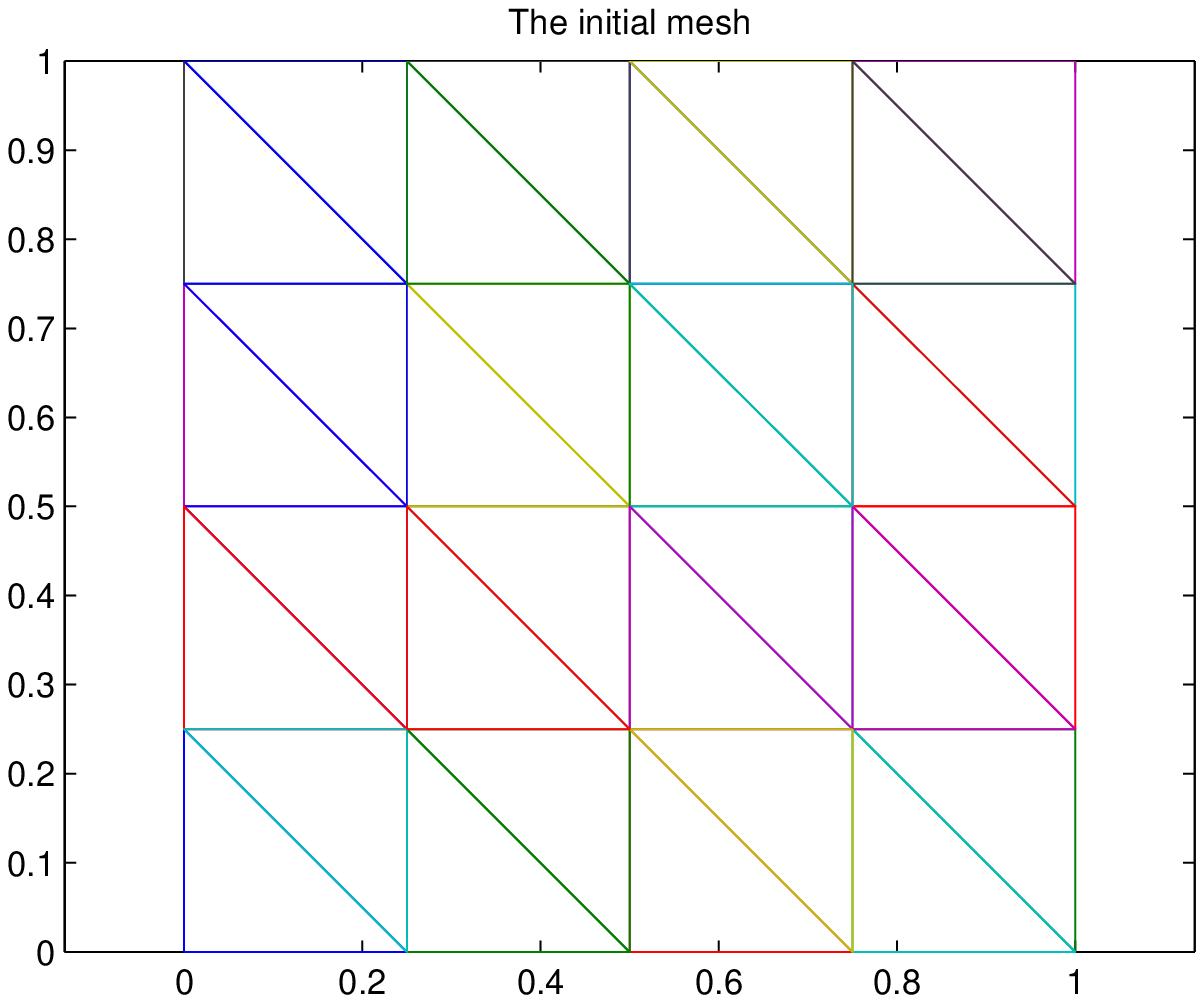}}
\subfigure{\includegraphics[width=0.45\textwidth]{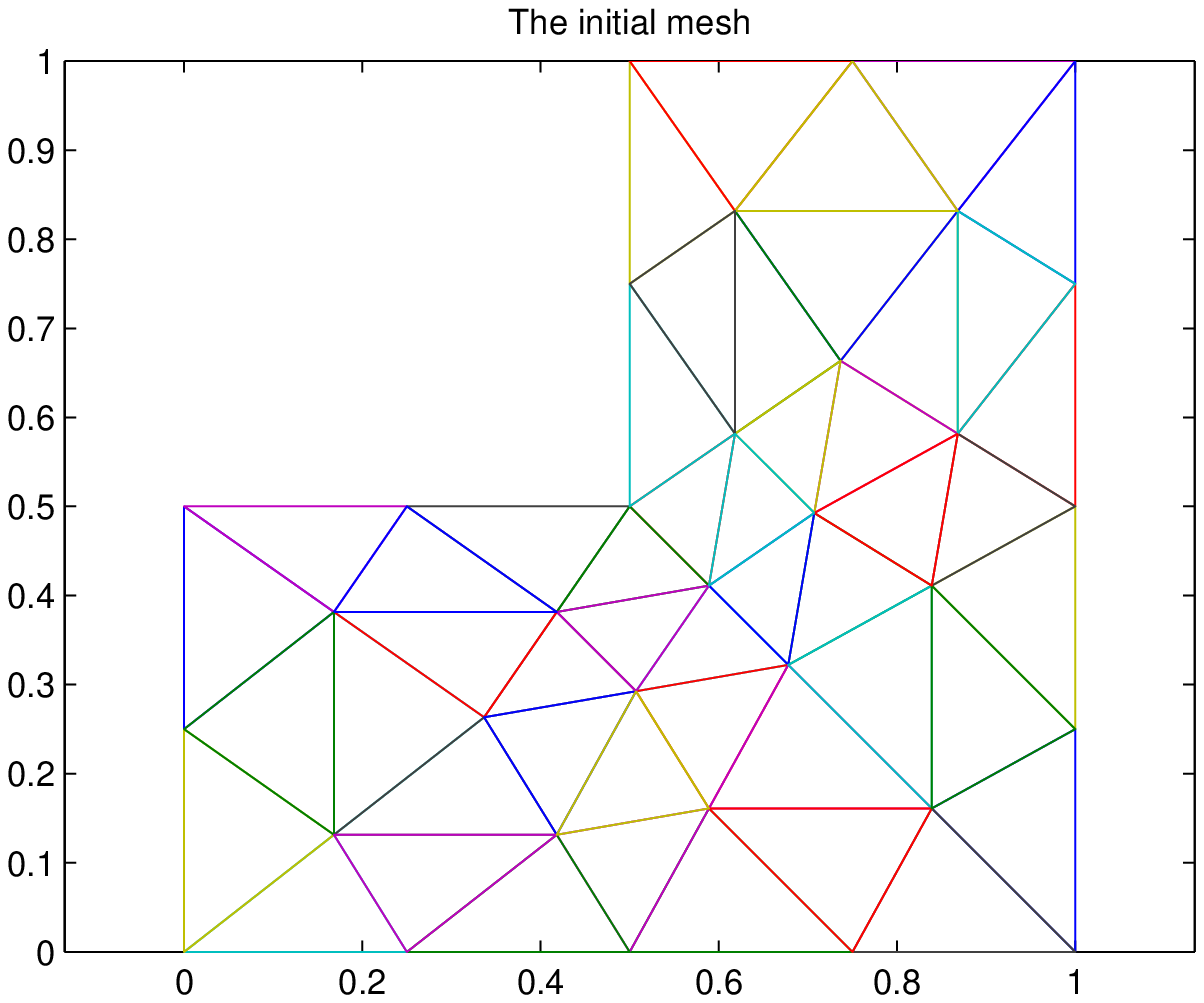}}
\caption{The initial meshes, left: the square, right: the L-shape domain.}
\label{fig:initmesh}
\end{figure}


We run series of numerical experiments on the these two domains, and test the accuracies of both the single-level and multi-level finite element schemes. Two kinds of finite element triples of lowest degree are tested, they are
\begin{description}
\item[triple A] the reduced Lagrangian type triples $P_2\sim P_2\sim P_0$;
\item[triple B] the Lagrangian type triples $P_2\sim P_2\sim P_1$.
\end{description}

On each domain, we construct a series of nested grids $\{\mathcal{T}_{h_i}\}_{i=0}^5$ and construct finite element triples $H^1_{h_i0}\times \undertilde{H}{}^1_{h_i0}\times L^2_{h_i0}$ thereon with some specific finite elements. Particularly, we will set the grid sizes $h_i\approx h_0(1/2)^i$. On each series of meshes, we will run the single-level and multi-level algorithms, to generate two series of approximated eigenvalues $\{\lambda_{h_i}\}$ and $\{\tilde{\lambda}_{h_i}\}$, and two series of approximated eigenfunctions $\{(u_{h_i},\uphi{}_{h_i},p_{h_i},w_{hi})\}$ and $\{(\tilde{u}_{h_i},\tilde{\uphi}{}_{h_i},\tilde{p}_{h_i},\tilde{w}_{hi})\}$. The convergence order is computed by
\begin{equation}\label{ord}
Ord_{\lambda}^k=log_2(|\frac{\lambda_5-\lambda_{k-1}}{\lambda_5-\lambda_{k}}|),~~~~k=1,2,3,4,
\end{equation}
\begin{equation}\label{ord}
Ord_{u}^k=log_2(||\frac{u_5-u_{k-1}}{u_5-u_{k}}||_{H^1}),~~~~k=1,2,3,4.
\end{equation}


From all these numerical results, we observe 1) both the schemes provide convergent discretization to the eigenvalue problem; their accuracy may depend on the regularity of the eigenfunctions, and essentially the domain; 2) the multi-level algorithm construct the same performance as the single-level scheme, but less computation cost if both of them use the finest mesh; 3) for {\bf triple A}, the convergence rate of eigenfunction is higher than the estimation; and 4) for both single- and multi-level methods, the computed eigenvalues can provide upper or lower bounds for the eigenvalues by different triples on convex domain.

\subsection{On the accuracy of single-level finite element schemes}

\subsubsection{Experiments on convex domain}

Figure \ref{fig:singlesqA} gives the convergence rates of the eigenvalues and eigenfunctions for the square with finite element {\bf triple A}, we give the errors for the first six eigenvalues and eigenfunctions, all the rates are almost 2, here we obtain the lower bound of the eigenvalues, the errors are given by $\lambda_{h_5}-\lambda_{h_k},\ k=1,2,3,4$, the convergence rates of the eigenfunctions are better than the theoretical result, the errors are given by $||u_{h_5}-u_{h_k}||_{H^1},\ k=1,2,3,4$.

\begin{figure}
\subfigure{\includegraphics[width=0.45\textwidth]{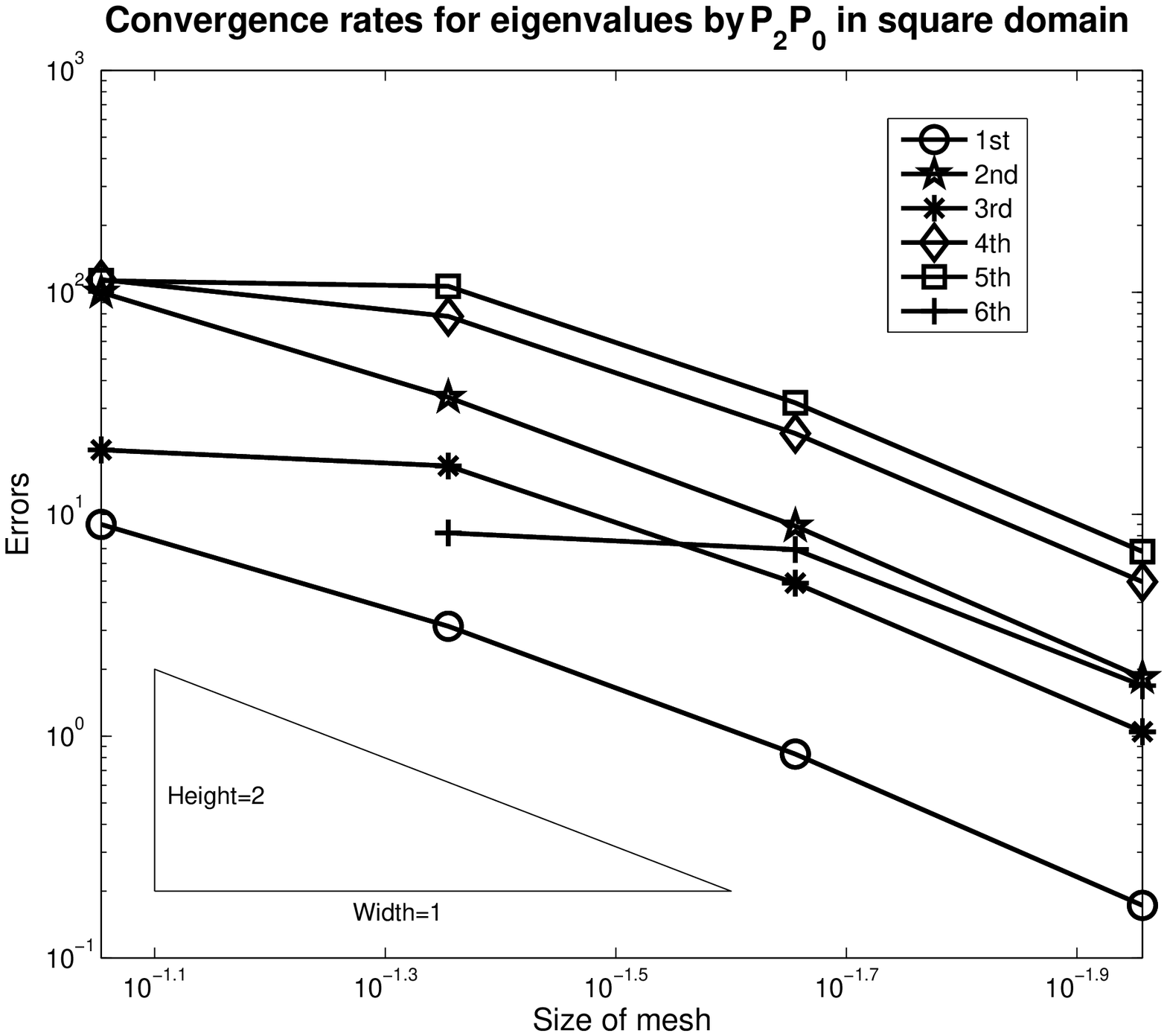}}
\subfigure{\includegraphics[width=0.45\textwidth , height=0.37\textwidth]{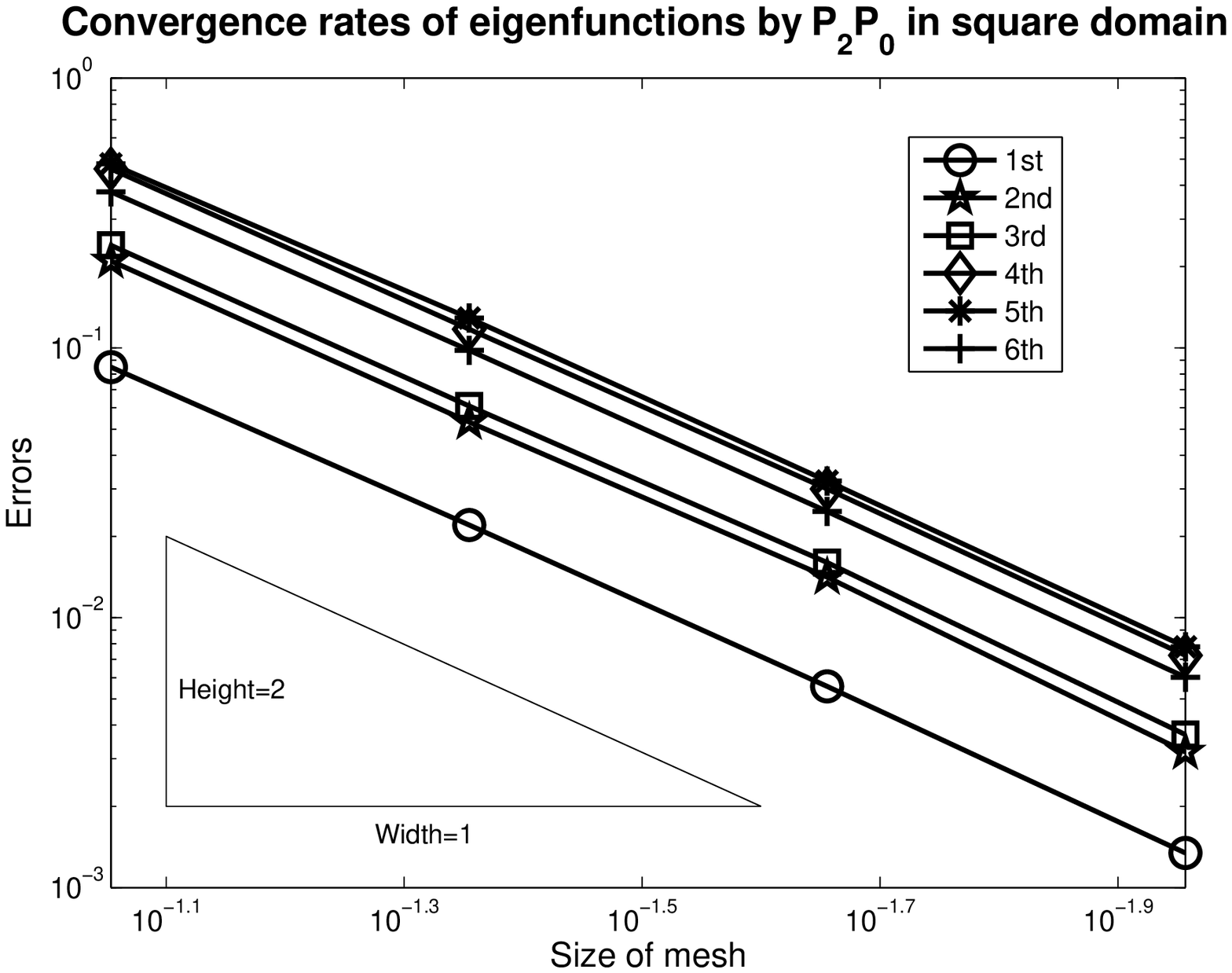}}
\caption{The convergence rates for the eigenvalues  and eigenfunctions of the square with single-level scheme and {\bf triple A}. $Y$-axis of left figure means $\lambda_{h_5}-\lambda_{h_k},\ k=1,2,3,4$, one point is missing since on the coarse mesh $\lambda_{h_5}-\lambda_{h_1}<0$.
 $Y$-axis of right figure means $||u_{h_5}-u_{h_k}||_{H^1},\ k=1,2,3,4$.
}
\label{fig:singlesqA}
\end{figure}

Figure \ref{fig:singlesqB} gives the convergence rates of the the first six eigenvalues and eigenfuctions for the square with finite element {\bf triple B}, all the convergence rates of eigenvalues are almost 4, here we obtain the upper bound of the eigenvalues,
the errors are given by $\lambda_{h_k}-\lambda_{h_5},\ k=1,2,3,4$. All the convergence rates of eigefunctions are almost 2 which is consistent with the theoretical result.

\begin{figure}
\subfigure{\includegraphics[width=0.45\textwidth]{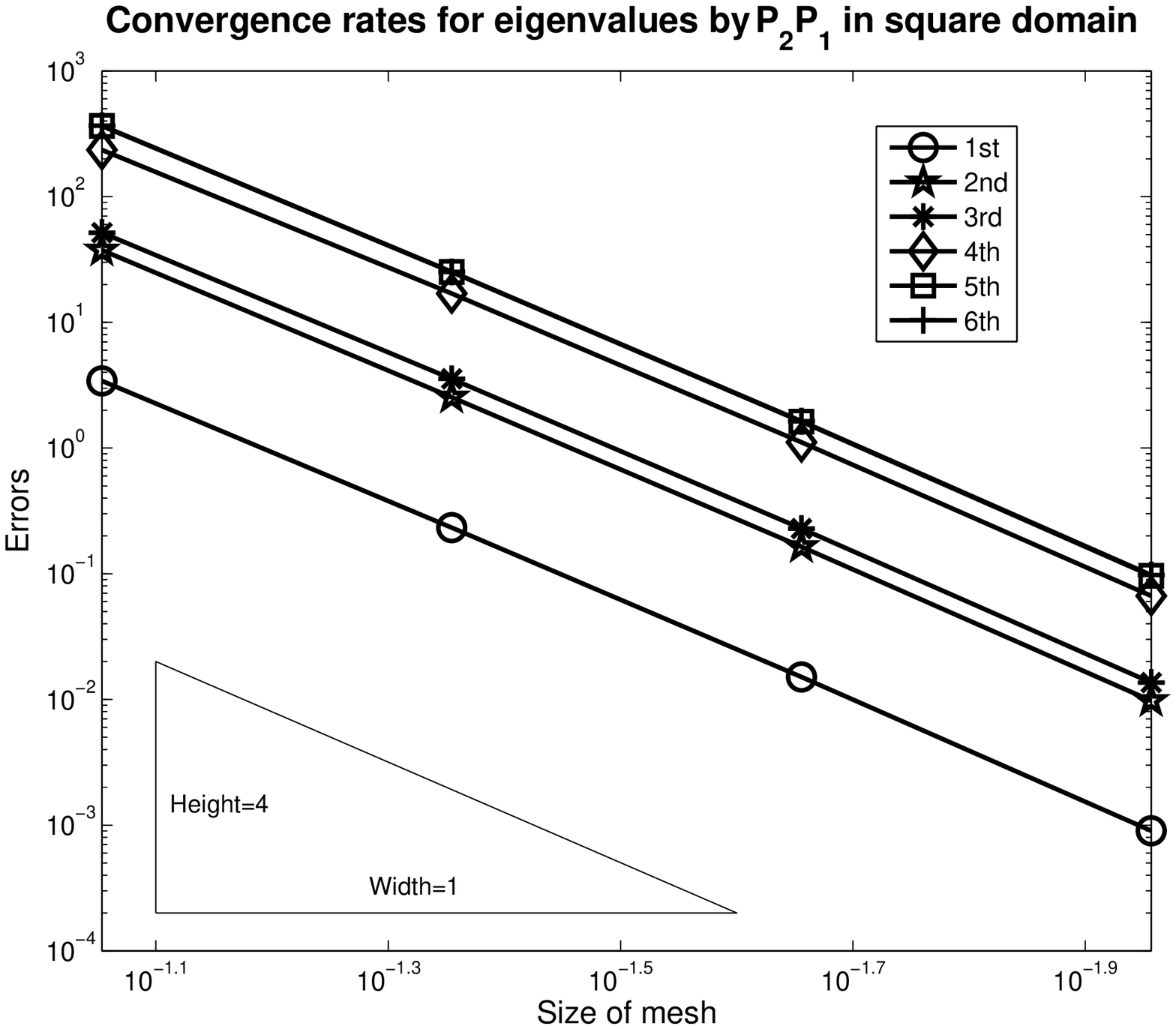}}
\subfigure{\includegraphics[width=0.45\textwidth]{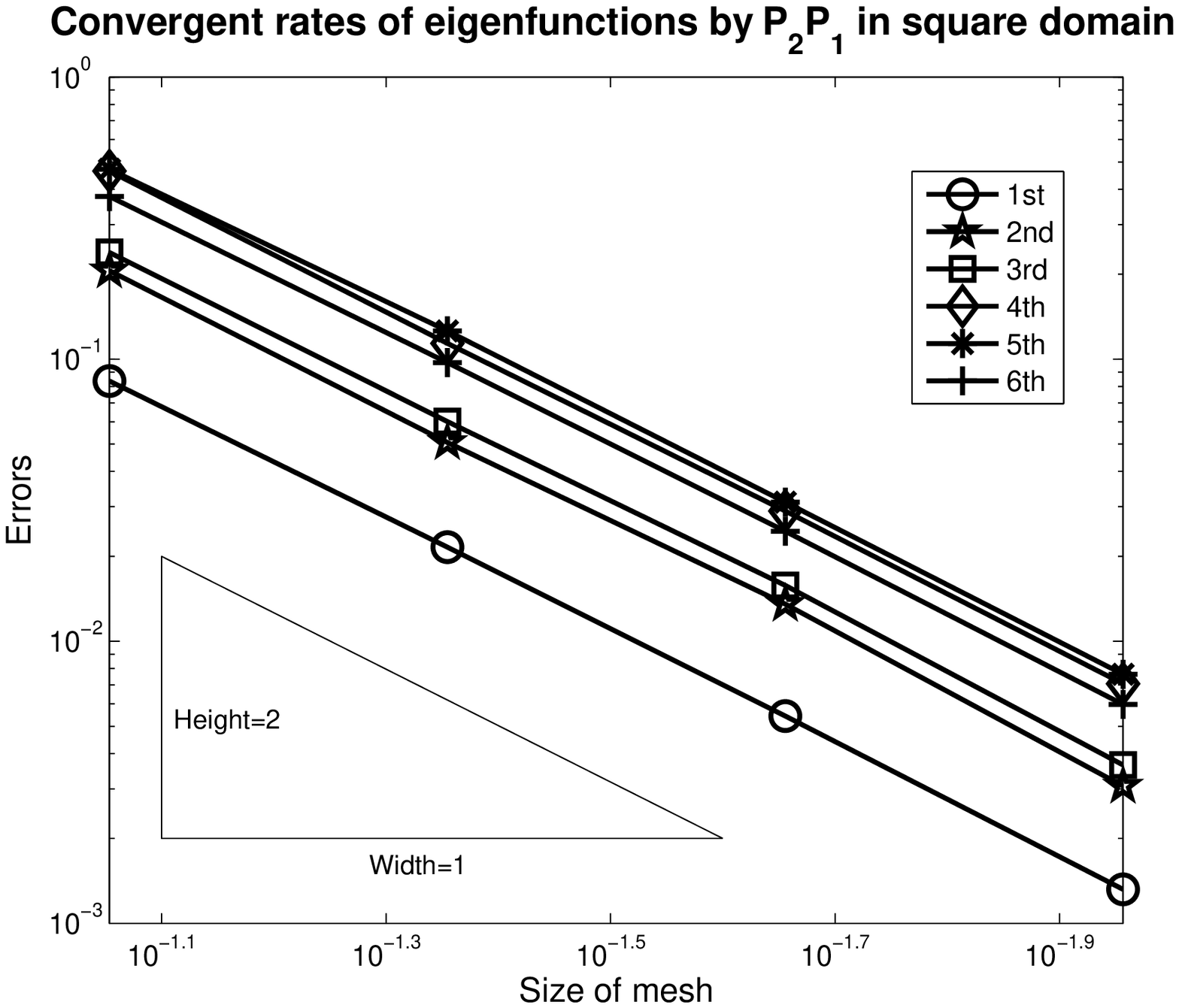}}
\caption{The convergence rates for the eigenvalues and eigenfuctions of the square with single-level scheme and {\bf triple B}. $Y$-axis of left figure means $\lambda_{h_k}-\lambda_{h_5},\ k=1,2,3,4$.  $Y$-axis of right figure means $||u_{h_5}-u_{h_k}||_{H^1},\ k=1,2,3,4$.}
\label{fig:singlesqB}
\end{figure}

\subsubsection{Experiments on nonconvex domain}

Figure \ref{fig:singlelsA} gives the convergence rates of the first six eigenvalues and eigenfuctions for the L-shape domain with finite element {\bf triple A}, all the convergence rates of the eigenvalues are almost 2, here we obtain the lower bound of the eigenvalues, the errors are given by $\lambda_{h_5}-\lambda_{h_k},\ k=1,2,3,4$. The convergence rates of the eigenfunctions are almost 2 which is better than the theoretical result.

\begin{figure}
\subfigure{\includegraphics[width=0.45\textwidth,height=0.36\textwidth]{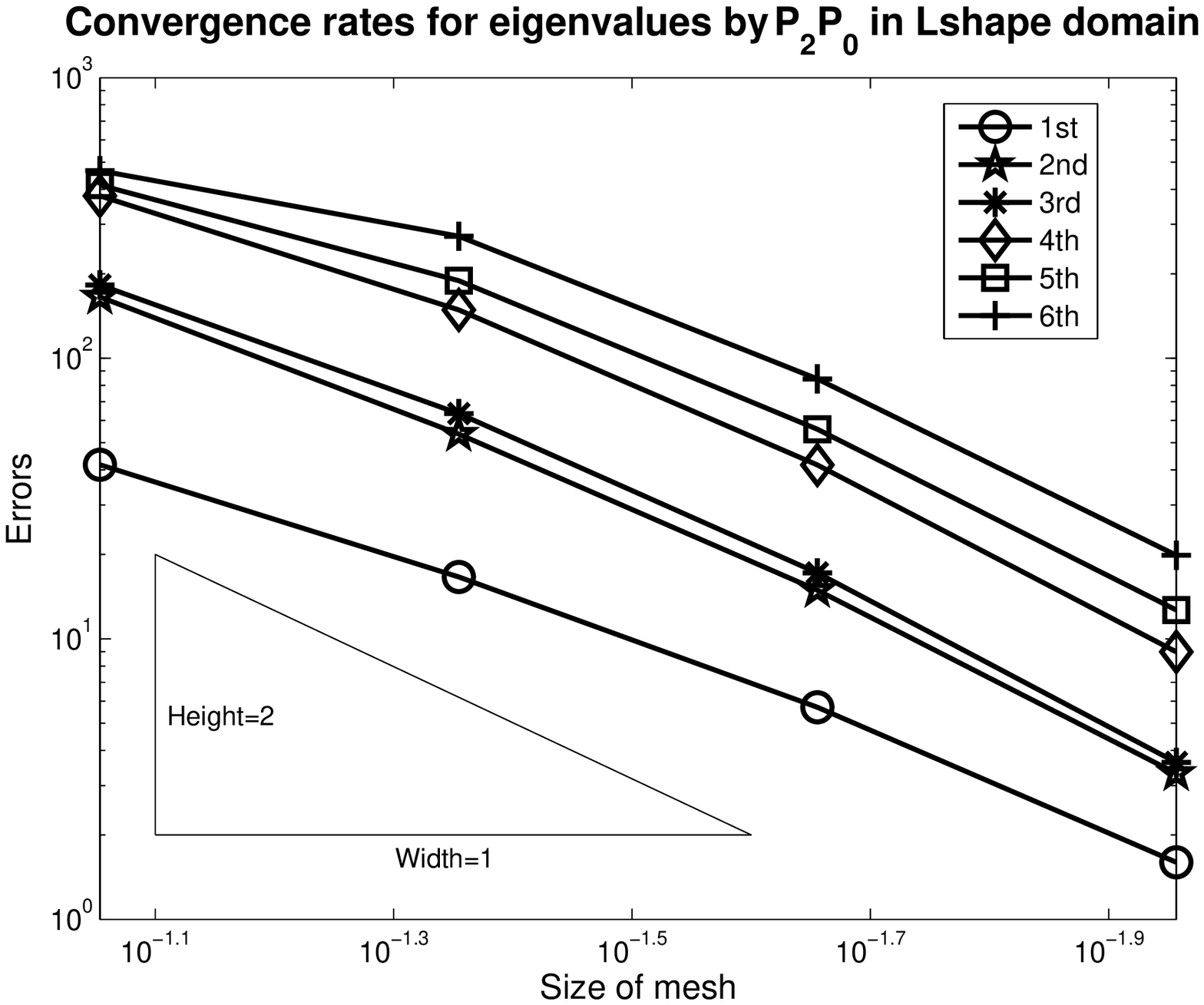}}
\subfigure{\includegraphics[width=0.45\textwidth]{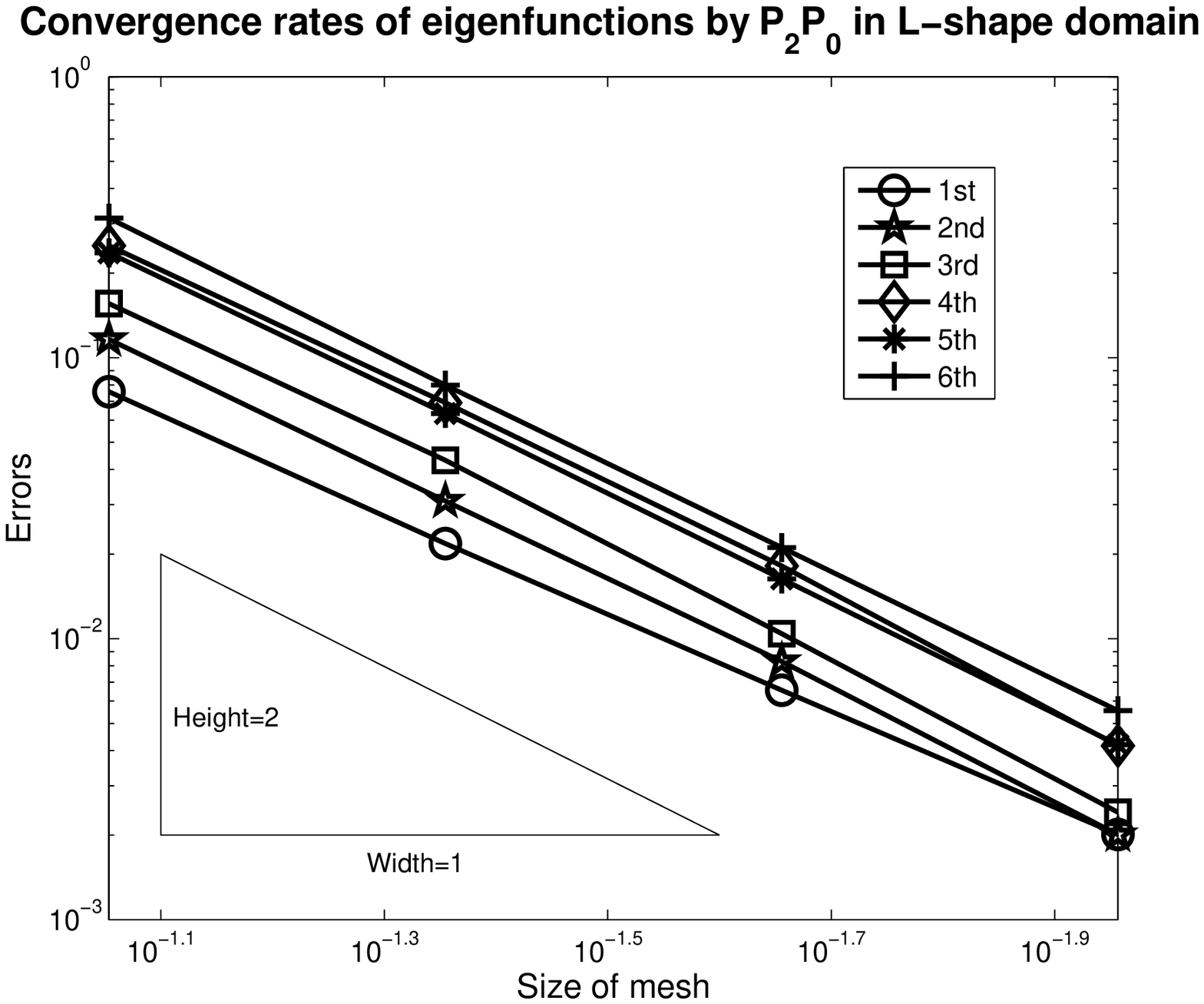}}
\caption{The convergence rates for the eigenvalues and eigenfuctions of the L-shape domain with single-level scheme and {\bf triple A}. $Y$-axis of left figure means $\lambda_{h_5}-\lambda_{h_k},\ k=1,2,3,4$.  $Y$-axis of right figure means $||u_{h_5}-u_{h_k}||_{H^1},\ k=1,2,3,4$.}
\label{fig:singlelsA}
\end{figure}

Table \ref{tab:P2P1_direct_lshape} gives the convergence rates of the the first six eigenvalues and eigenfunctions for the L-shape domain with finite element {\bf triple B}, the change of the eigenvalues is not monotone.
\begin{table}[htp]
\caption{\label{tab:P2P1_direct_lshape}The performance of {\bf triple B}  on L-shape domain with single-level scheme.}
\begin{tabular}{cccccccc}\hline
Mesh&1&2&3&4&Trend&$Ord_{\lambda}$&$Ord_u$\\\hline
$\lambda_1$&6637.38041&6671.06581&6687.93810&6696.13794&$\nearrow$&1.61242&1.64878\\
$\lambda_2$&11057.17095&11054.86661&11054.58037&11054.52410&$\searrow$&2.60578&2.06026\\
$\lambda_3$&14905.85096&14904.70082&14905.03399&14905.17967&$\searrow\nearrow$&1.71677&2.05330\\
$\lambda_4$&26165.81310&26153.57454&26152.64925&26152.55881&$\searrow$&3.48943&2.08511\\
$\lambda_5$&33343.11501&33391.54019&33423.03931&33438.85710&$\nearrow$&1.58081&1.73460\\
$\lambda_6$&53319.98768&53463.51716&53539.42249&53575.08523&$\nearrow$&1.64543&1.71939\\\hline
\end{tabular}
\end{table}

\subsection{On the accuracy of multi-level finite element schemes}
\subsubsection{Experiments on convex domain}

Figure \ref{fig:mulsqA} gives the convergence rates of the first six eigenvalues and eigenfuctions for the square with finite element {\bf triple A} by the multi-level scheme, the multi-level method has almost the same convergence rates as the single-level one, all the convergence rates are almost 2, here we also obtain the lower bound of the eigenvalues as in the single-level scheme, the errors are given by $\tilde{\lambda}_{h_5}-\tilde{\lambda}_{h_k},\ k=1,2,3,4$.

\begin{figure}
\subfigure{\includegraphics[width=0.45\textwidth]{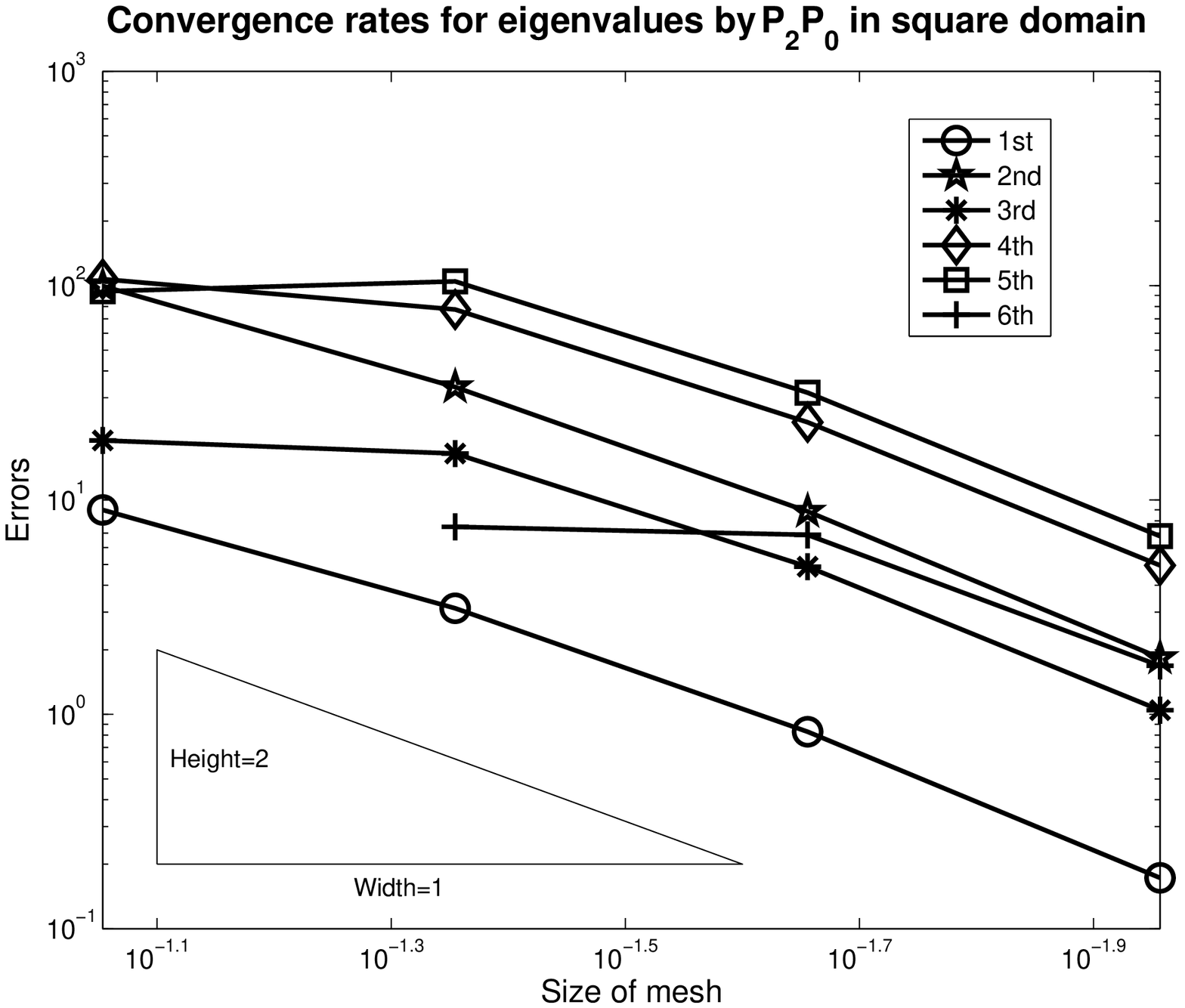}}
\subfigure{\includegraphics[width=0.45\textwidth,height=0.35\textwidth]{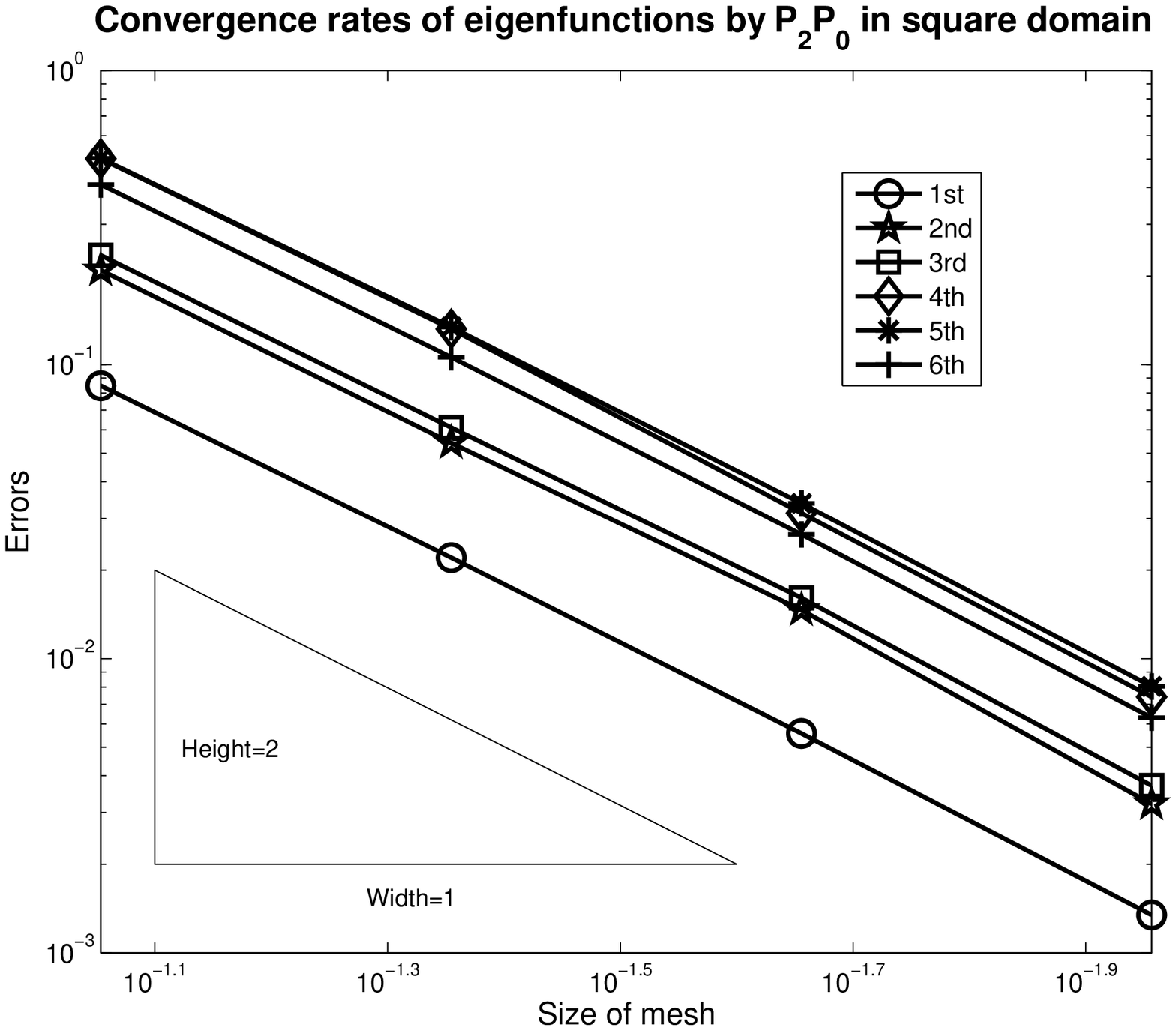}}
\caption{The convergence rates for the eigenvalues and eigenfuctions of the square with multi-level scheme and {\bf triple A}. $Y$-axis of left figure means $\tilde{\lambda}_{h_5}-\tilde{\lambda}_{h_k},\ k=1,2,3,4$, one point is missing since on the coarse mesh $\tilde{\lambda}_{h_5}-\tilde{\lambda}_{h_1}<0$.  $Y$-axis of right figure means $||\tilde u_{h_5}-\tilde u_{h_k}||_{H^1},\ k=1,2,3,4$.}
\label{fig:mulsqA}
\end{figure}

Figure \ref{fig:mulsqB} gives the results with finite element {\bf triple B}, all the convergence rates for the eigenvalues are almost 4 which is the same as single-level method and we also get the upper bound, all the convergence rates for the eigenfunctions are almost 2.

\begin{figure}
\subfigure{\includegraphics[width=0.45\textwidth]{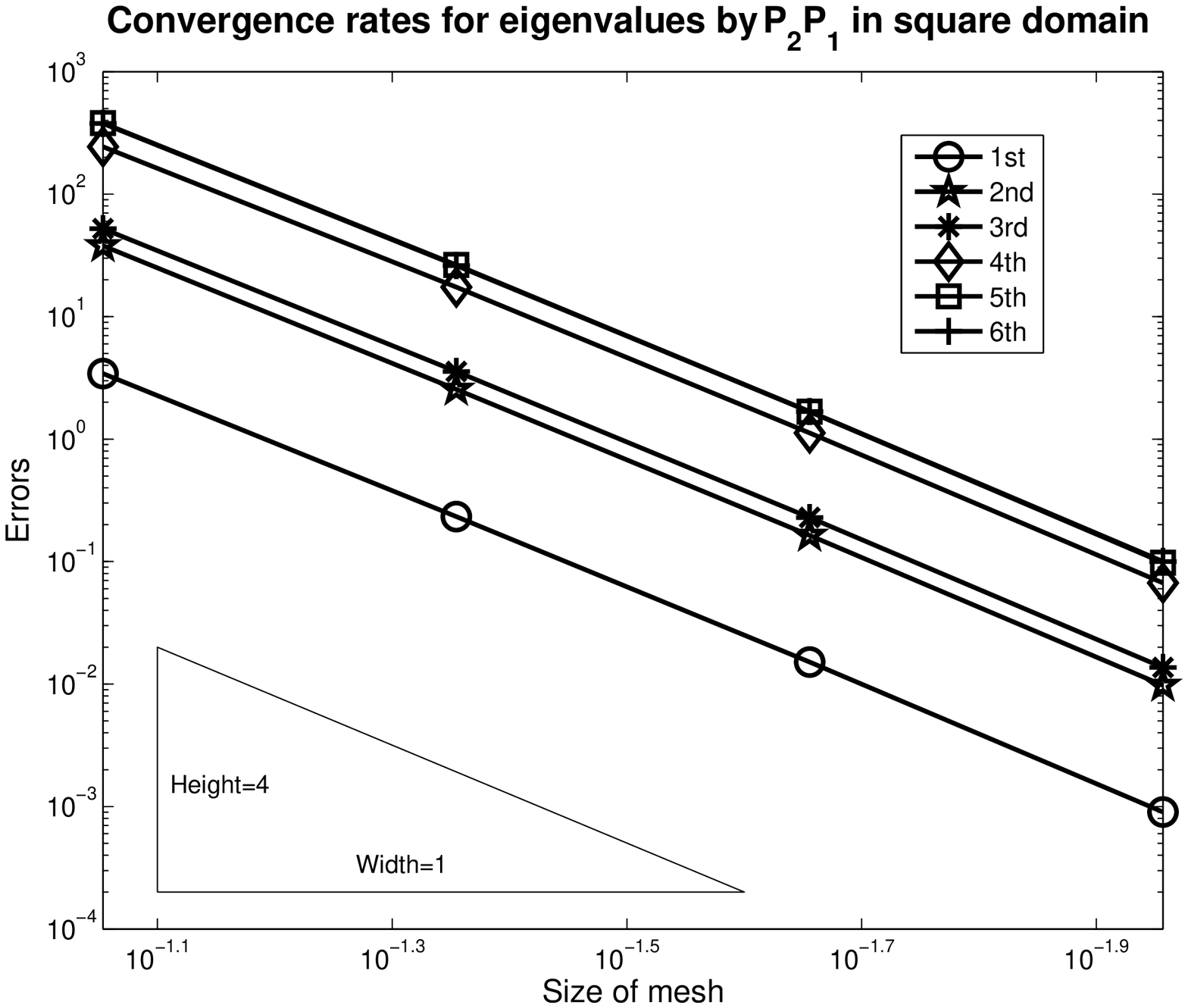}}
\subfigure{\includegraphics[width=0.45\textwidth,height=0.35\textwidth]{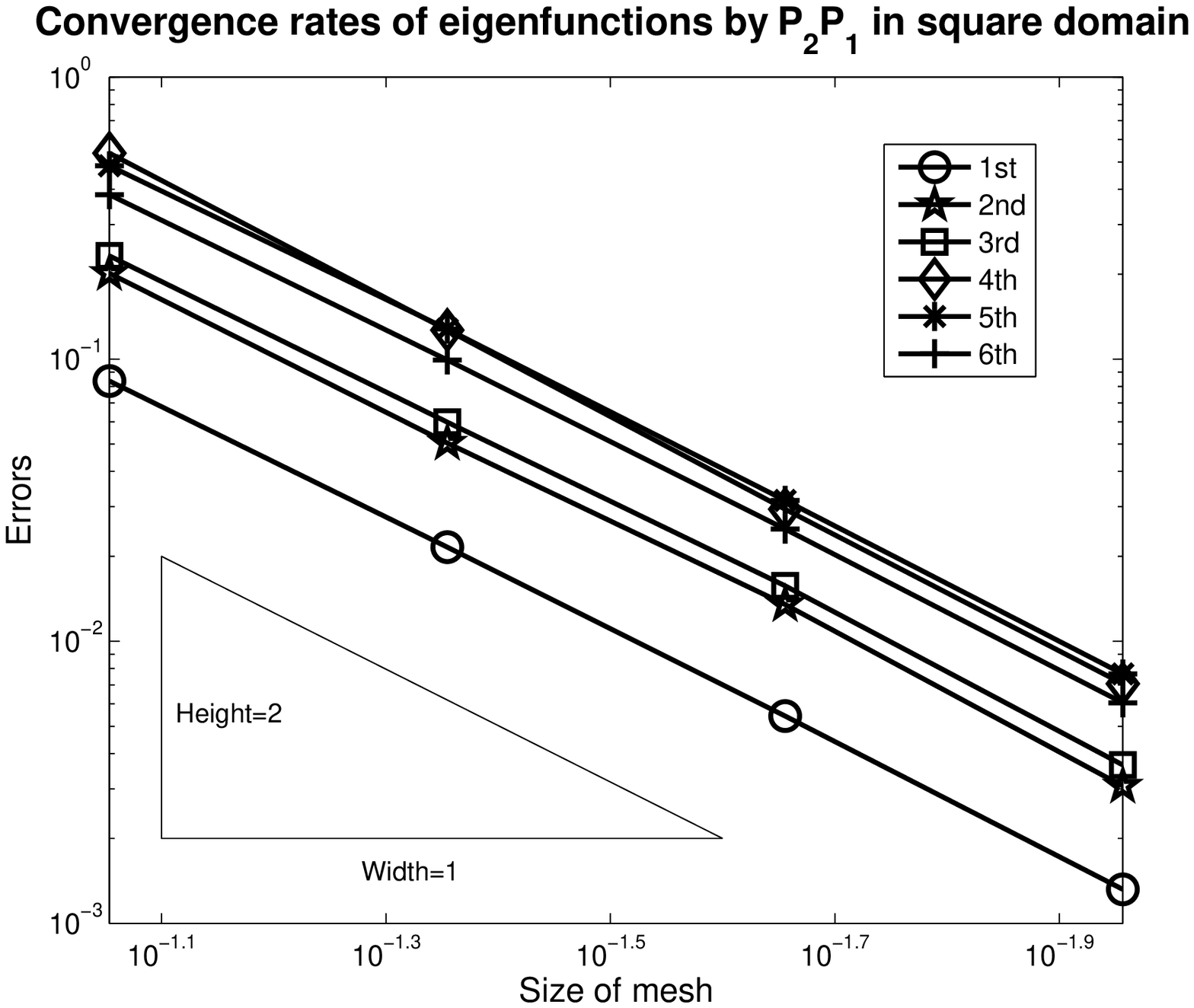}}
\caption{The convergence rates for the eigenvalues and eigenfunctions of the square with multi-level scheme and {\bf triple B}. $Y$-axis of left figure means $\tilde{\lambda}_{h_k}-\tilde{\lambda}_{h_5},\ k=1,2,3,4$. $Y$-axis of right figure means $||\tilde u_{h_5}-\tilde u_{h_k}||_{H^1},\ k=1,2,3,4$.}
\label{fig:mulsqB}
\end{figure}

\subsubsection{Experiments on nonconvex domain}

Figure \ref{fig:multilsA} gives the convergence rates of the first six eigenvalues and eigenfunctions for the L-shape domain with finite element {\bf triple A} by multi-level scheme, analogous to single-level method, all the convergence rates are almost 2 and the lower bound is obtained, which is similar to Figure \ref{fig:singlelsA}.

\begin{figure}
\subfigure{\includegraphics[width=0.45\textwidth]{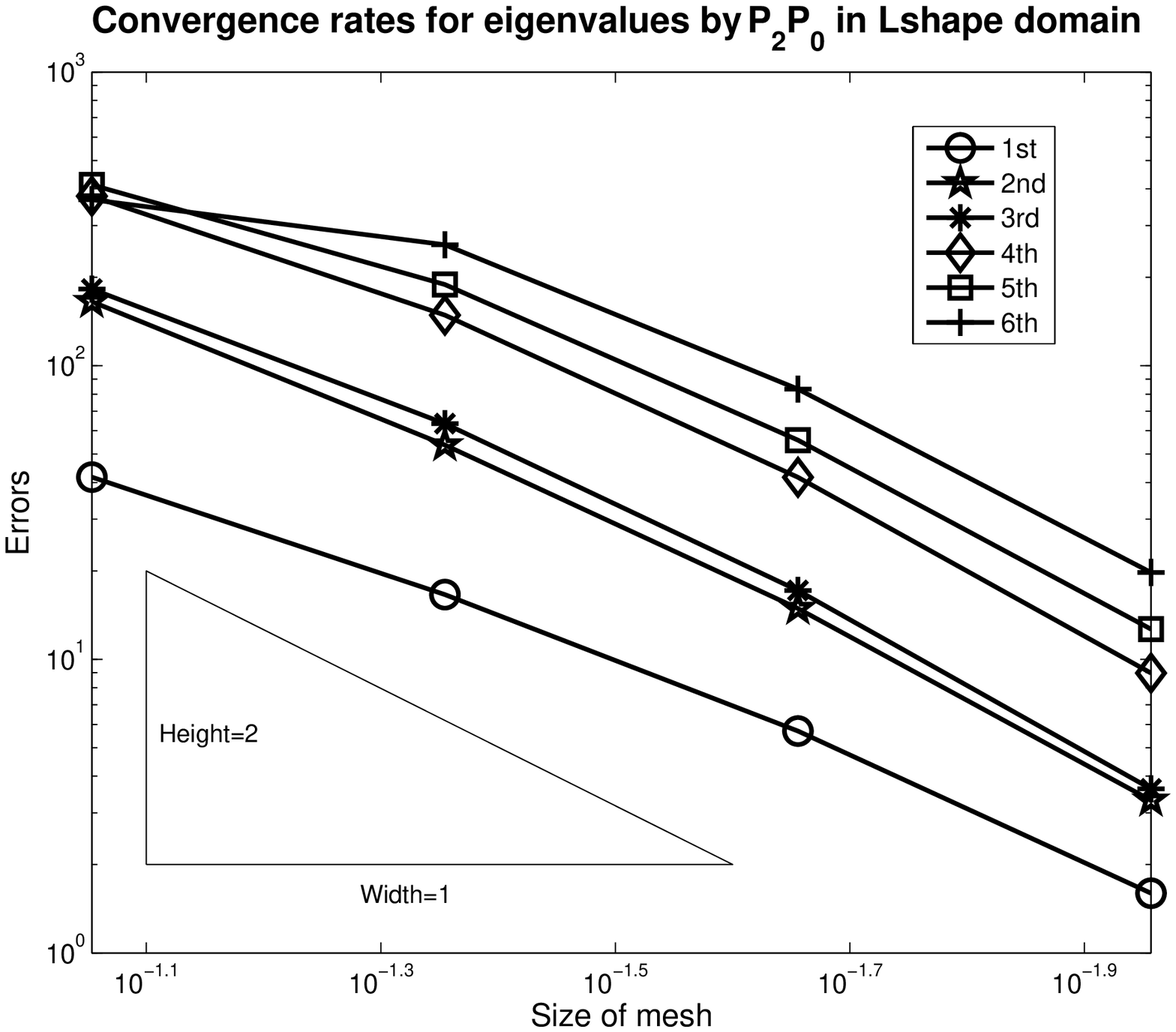}}
\subfigure{\includegraphics[width=0.45\textwidth]{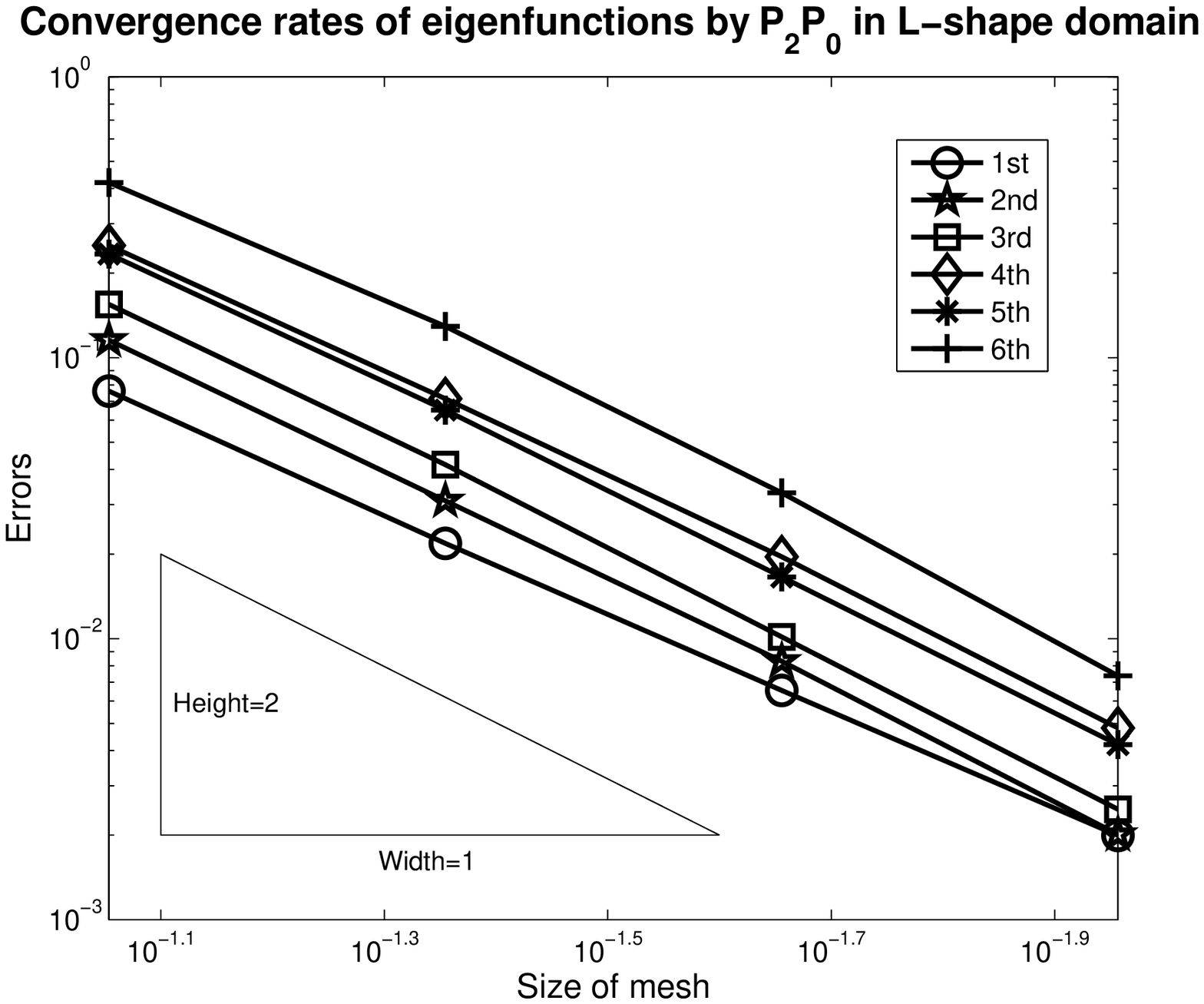}}
\caption{The convergence rates for the eigenvalues and eigenfunctions of the L-shape domain with multi-level scheme and {\bf triple A}. $Y$-axis of left figure means $\tilde{\lambda}_{h_5}-\tilde{\lambda}_{h_k},\ k=1,2,3,4$. $Y$-axis of right figure means $||\tilde u_{h_5}-\tilde u_{h_k}||_{H^1},\ k=1,2,3,4$.}
\label{fig:multilsA}
\end{figure}

Table \ref{tab:P2P1_multi_lshape} gives the convergence rates of the the first six eigenvalues and eigenfunctions for the L-shape domain with finite element {\bf triple B} by multi-level scheme, the change of the eigenvalues is still not monotone.
\begin{table}[htp]
\caption{\label{tab:P2P1_multi_lshape}The performance of {\bf triple B}  on L-shape domain with multi-level scheme.}
\begin{tabular}{cccccccc}\hline
Mesh&1&2&3&4&Trend&$Ord_{\lambda}$&$Ord_u$\\\hline
$\lambda_1$&6637.38138&6671.06594&6687.93813&6696.13795&$\nearrow$&1.61241&1.66165\\
$\lambda_2$&11057.17116&11054.86661&11054.58037&11054.52410&$\searrow$&2.60579&2.06026\\
$\lambda_3$&14905.85342&14904.70090&14905.03400&14905.17968&$\searrow\nearrow$&1.71659&1.92185\\
$\lambda_4$&26165.83290&26153.57474&26152.64926&26152.55882&$\searrow$&3.48970&2.08559\\
$\lambda_5$&33343.30473&33391.55758&33423.04243&33438.85781&$\nearrow$&1.58052&1.66333\\
$\lambda_6$&53330.17977&53465.12739&53539.64109&53575.12545&$\nearrow$&1.63222&1.68321\\\hline
\end{tabular}
\end{table}

\section{Concluding remarks}
\label{sec:cr}

In this paper, we construct a multi-level mixed scheme for the biharmonic eigenvalue problem. The algorithm possesses both optimal accuracy and optimal computational cost. We remark that, the mixed formulation given in the present paper is equivalent to the primal one; namely, at continuous level, no spurious eigenvalue is brought in. By the mixed formulation presented in this paper, the biharmonic eigenvalue problem can be discretized with low-degree Lagrangian finite elements. Discretized Poisson equation and Stokes problems also play roles in the implementation of the multi-level algorithm, which can reduce much the computational work. Both theoretical analysis and numerical verification are given.

For the theoretical analysis, we reinterpret the mixed formulation as an eigenvalue problem of a generalized symmetric operator $T$ on an augmented space $V$. This view of point may take hint to the research on other topics of these saddle-point problems; these will be discussed in future. Aiming at the multi-level algorithm, in this paper, we only discuss the conforming cases that $V_h\subset V$. The nonconforming cases that $V_h\not\subset V$ can also be used as a single-level algorithm lonely. Also, the utilization to biharmonic equation with other boundary condition and eigenvalue problems with other types can be expected.

It is observed that both the single- and multi-level algorithms tend to be able to provide upper or lower bounds of the eigenvalues, at least when the domain is convex. The theoretical verification and further utilization of this phenomena will be meaningful. Actually, the computation of the guaranteed bounds with the mixed formulation is not that trivial, as the operator associated is not adjoint in the Hilbert space. Some new techniques may have to be turned to for the theoretical analysis. Also, once we can get the guaranteed bounds, the multi-level algorithms can be improved in both its design and performance. The guaranteed computation of the upper and lower bounds will be discussed in future works. Because the mixed formulation admits nested discretization, the combination and interaction between the multi-level algorithm and the adaptive algorithm seem expected. This will also be discussed in future.
~\\

\paragraph{\bf Acknowledgement} The authors would like to thank Prof. Hehu Xie for his valuable discussion.

\end{document}